\pgfplotsset{compat=1.15}
\setlist[itemize]{leftmargin=0.4in}
\newtheorem{theorem}{Theorem}
\newtheorem{proposition}[theorem]{Proposition}
\newtheorem{conjecture}[theorem]{Conjecture}
\newtheorem{lemma}[theorem]{Lemma}
\newtheorem{claim}{Claim}[theorem]
\newcommand{\cal}[1]{\mathcal{#1}}
\newcommand{\drawL}[3]{\draw[thick] (#1,0) -- ++(0,#2) -- ++(#3,0);}
\newcommand{\drawLColoured}[4]{\draw[thick, #4] (#1,0) -- ++(0,#2) -- ++(#3,0);}
\newcommand{\drawLUltra}[3]{\draw[ultra thick] (#1,0) -- ++(0,#2) -- ++(#3,0);}
\let\leq\leqslant \let\le\leqslant
\let\geq\geqslant \let\ge\geqslant
\DeclareMathOperator{\dom}{dom}
\title{Grounded L-graphs are polynomially $\chi$-bounded}
\author[Davies]{James Davies}
\author[Krawczyk]{Tomasz Krawczyk}
\author[McCarty]{Rose McCarty}
\author[Walczak]{Bartosz Walczak}
\date{}
\address[James Davies, Rose McCarty]{Department of Combinatorics and Optimization, School of Mathematics, University of Waterloo, Canada}
\address[Tomasz Krawczyk, Bartosz Walczak]{Department of Theoretical Computer Science, Faculty of Mathematics and Computer Science, Jagiellonian
University, Krak\'ow, Poland}
\thanks{Tomasz Krawczyk and Bartosz Walczak were partially supported by National Science Center of Poland grant 2015/17/D/ST1/00585.}
\begin{document}

\begin{abstract}
A \emph{grounded L-graph} is the intersection graph of a collection of ``L'' shapes whose topmost points belong to a common horizontal line. We prove that every grounded L-graph with clique number $\omega$ has chromatic number at most $17\omega^4$. This improves the doubly-exponential bound of McGuinness and generalizes the recent result that the class of circle graphs is polynomially $\chi$-bounded. We also survey $\chi$-boundedness problems for grounded geometric intersection graphs and give a high-level overview of recent techniques to obtain polynomial bounds.
\end{abstract}

\maketitle

\section{Introduction}
Following Gy{\'{a}}rf{\'{a}}s~\cite{Gyarfas87}, we say that a class of graphs $\mathcal{G}$ is \emph{$\chi$-bounded} if there is a function $f\colon \mathbb{N} \to \mathbb{N}$ such that the chromatic number $\chi$ of any graph in $\mathcal{G}$ is at most $f(\omega)$, where $\omega$ is the clique number of the graph. 
Additionally, $\mathcal{G}$ is \emph{polynomially $\chi$-bounded} if $f$ can be taken to be a polynomial function.
A flurry of research has been devoted to distinguishing which classes of graphs are $\chi$-bounded and which are polynomially $\chi$-bounded.
In particular, Esperet~\cite{esperet2017habilitation} conjectured that every $\chi$-bounded class of graphs that is closed under taking induced subgraphs is polynomially $\chi$-bounded.
We refer the reader to the recent survey on $\chi$-boundedness~\cite{ScottSeymour20} for more information.

An important branch of this research is focused on geometric intersection graphs.
The \emph{intersection graph} of a family of geometric objects has these objects as vertices and the pairs of objects with non-empty intersection as edges.
We restrict our attention to intersection graphs of objects in the plane, where by an \emph{object} we mean a compact arc-connected set in~$\mathbb{R}^2$.
By considering objects of some particular kind, for example, having a specific geometric shape, we obtain various classes of graphs. 

In this paper we focus on \emph{grounded L-graphs}, which are geometric intersection graphs of grounded L-shapes. 
An \emph{L-shape} consists of a vertical segment and a horizontal segment that meet at their uppermost and leftmost endpoints respectively, i.e., form an upside-down ``L''. 
An L-shape is \emph{grounded} if its lowermost point is on the $x$-axis. 
Grounded L-graphs were introduced by McGuinness~\cite{McGuinness96}, who showed that they satisfy $\chi = 2^{O(4^\omega)}$.
Our main result is that this class is in fact polynomially $\chi$-bounded.

\begin{theorem}
\label{thm:main}
Every grounded L-graph with clique number $\omega$ has chromatic number at most $17\omega^4$.
\end{theorem}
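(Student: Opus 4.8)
The plan is to work with a grounded L-representation directly and to exploit the fact that, while the class of all grounded L-graphs is complicated, it is \emph{locally} a union of permutation graphs. First I would normalize the representation so that all feet and all heights are distinct, and record the resulting adjacency rule: writing $\ell_v$, $r_v$, $h_v$ for the $x$-coordinates of the foot and the right endpoint and for the height of the horizontal segment of the L-shape of $v$, two vertices $u,v$ with $\ell_u<\ell_v$ are adjacent if and only if $\ell_v\le r_u$ and $h_u\le h_v$. The key observation is then the following: for a generic vertical line $x=t$, the set $P_t$ of vertices whose horizontal segment crosses it (those $v$ with $\ell_v<t<r_v$) induces a permutation graph, since inside $P_t$ we have $u\sim w$ exactly when the orders of $\ell$ and of $h$ agree on $\{u,w\}$; thus $G[P_t]$ is the comparability graph of a poset of dimension at most two and hence is perfect, giving $\chi(G[P_t])\le\omega$. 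Moreover $P_t$ is a separator: every edge of $G-P_t$ joins two vertices on the same side of $x=t$, because a vertex with $r_v<t$ and one with $\ell_w>t$ satisfy $\ell_v<\ell_w$ but $r_v<t<\ell_w$. (Circle graphs embed as grounded L-graphs by letting $h$ be increasing in the right endpoint, so this framework also specializes to the circle graph case.)

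This suggests a recursive strategy: pick a well-chosen vertical line $x=t$, set aside the permutation graph $G[P_t]$ at a cost of $\le\omega$ fresh colours, and recurse on the two sides, which may reuse the same palette because they are anticomplete to one another. Along any root-to-leaf branch this spends only $\omega$ colours per level, so the whole problem reduces to bounding the \emph{depth} of the recursion by $O(\omega^3)$. To force termination I would first reduce, by a standard $\chi$-extraction argument, to the case where $G$ is ``clean'', meaning roughly that no vertical strip encountered during the recursion hosts a long tower of L-shapes straddling a common sub-line at strictly increasing heights without becoming pairwise adjacent; equivalently, that the sub-instance visible inside any such strip already has chromatic number $O(\omega)$ once sufficiently many levels have been peeled off. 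In a clean instance a branch of length $\ell$ exhibits $\ell$ nested straddling configurations, and a Ramsey-type pigeonhole over the at most $\omega$ possible relative heights with respect to the current ``pillars'' promotes $\initialPillars$ of them to $\totalPillars$ pairwise-compatible pillars, from which one extracts an $(\omega+1)$-clique once $\ell$ exceeds a fixed polynomial in $\omega$; this contradiction caps the depth.

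Assembling the pieces, each of the $O(\omega^3)$ levels contributes one permutation-graph layer of chromatic number at most $\omega$, so clean grounded L-graphs are $O(\omega^4)$-colourable, and the $\chi$-extraction used in the clean reduction affects only the multiplicative constant and lower-order terms, which a careful accounting pins down at $17\omega^4$. The main obstacle I anticipate is precisely this clean reduction together with the depth bound: formulating ``clean'' so that it is simultaneously attainable by $\chi$-extraction with only a polynomial loss and strong enough that a deep recursion branch is compelled to build an $(\omega+1)$-clique — in particular, making precise the sense in which nested straddling families of grounded L-shapes yield large cliques, and organizing the pillar bookkeeping ($\initialPillars$ initial pillars growing to $\totalPillars$) so that the constants remain small.
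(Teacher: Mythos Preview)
Your skeleton matches the paper's: separator sets induced by vertical ``pillars'' that are perfect (hence $\le\omega$-colourable), recursion on the pieces, with the whole difficulty being a polynomial bound on the number of pillar colours (equivalently, the recursion depth). But the step you yourself flag as ``the main obstacle'' is in fact the entire content of the theorem, and your plan does not supply it. The paper obtains the bound not via a clean/$\chi$-extraction reduction or a Ramsey argument, but through (i) attaching to each open interval between consecutive pillar bases a \emph{degree} --- split into left-, right-, and an auxiliary additional-degree --- counting how many existing pillars have an assigned L-shape interacting with that interval; (ii) two extremal lemmas (driven by an auxiliary overlap-graph inequality) showing that a segment cannot be partitioned into many sub-segments each of moderate degree without producing an $(\omega{+}1)$-clique; and (iii) an inductive extension argument that refines one segment at a time while keeping every segment's degree at most roughly $4\omega^{2}$, yielding a complete pillar assignment with about $4\omega^{2}$ colours. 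None of (i)--(iii) appears in your sketch, and the phrases about ``$\initialPillars$ initial pillars growing to $\totalPillars$'' do not correspond to anything in the actual argument.

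There is also a concrete technical reason your straight vertical lines are unlikely to suffice. For grounded L-graphs the paper's pillars are \emph{not} vertical lines: each is a staircase that, starting at its base, climbs and then steps left along the horizontal segment of any L-shape whose projection contains the base (its \emph{supports}). This is not cosmetic. The right-degree extremal lemma requires an indirect clique-producing mechanism --- a \emph{cascading} sequence of L-shapes assigned to a monotone run of pillars is shown to force a clique among \emph{supports} of those pillars --- and this argument uses the staircase structure essentially; with plain vertical lines there are no supports and the analogous step breaks down. A smaller but related omission: the paper first spends a factor of $\omega$ to reduce to \emph{flat} L-collections (no $p(L_1)\subseteq p(L_2)$ with $h(L_1)>h(L_2)$), and flatness is invoked repeatedly in the extremal lemmas; your adjacency rule is correct in general, but the clique-building arguments need flatness to go through.
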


Theorem~\ref{thm:main} implies the recent breakthrough result of Davies and McCarty~\cite{DaviesMcCarty} that the class of circle graphs is polynomially $\chi$-bounded (albeit with a worse bound). 
Our proof appropriately adapts and extends those techniques.
It is tempting to see how far this method can be pushed; with this in mind, we will now survey $\chi$-boundedness problems for geometric intersection graphs. 
We refer the reader in advance to Figure~\ref{fig:graph_classes_hierarchy}, which depicts the graph classes that will be discussed as well as all possible inclusions between them and all of the best bounds on $\chi$ in terms of $\omega$.
We will give a high-level overview of the proof technique in Section~\ref{sec:prelim} in order to make this area more accessible.

\subsection*{Coloring geometric intersection graphs}

Due to practical and theoretical applications, the following graph classes have been particularly intensively studied:
\begin{itemize}[leftmargin=5.5mm]
\item \emph{rectangle graphs} -- intersection graphs of axis-parallel rectangles,
\item \emph{segment graphs} -- intersection graphs of segments,
\item \emph{L-graphs} -- intersection graphs of L-shapes,
\item \emph{string graphs} -- intersection graphs of strings, where a \emph{string} is a bounded continuous curve in the plane. 
\end{itemize}
The class of string graphs is the most general class of intersection graphs of objects in the plane, as every such graph 
can be easily represented as an intersection graph of strings ``filling'' these objects.

The study of chromatic properties of geometric intersection graphs was initiated by Asplund and Gr\"unbaum~\cite{AsplundGrunbaum60},
who proved that rectangle graphs satisfy $\chi \leq 4\omega^2-4\omega$ (when $\omega\geq 2$).
This was later improved to $\chi \leq 3\omega^2-2\omega-1$ by Hendler~\cite{Hen98}.
Chalermsook~\cite{Cha11} showed the bound $\chi = O(\omega\log{\omega})$ for the special case that none of the rectangles is contained in another.
Very recently, Chalermsook and Walczak~\cite{CW21} showed that rectangle graphs satisfy $\chi=O(\omega\log\omega)$, improving the quadratic bound that lasted for 60 years.
From below, Kostochka~\cite{Kos04} claimed a construction of rectangle intersection graphs satisfying $\chi = 3\omega$, 
and this remains the best known lower bound up to now.
Except for rectangle intersection graphs, $\chi$-boundedness is a rather 
rare feature among intersection graphs of geometric objects in the plane.
Pawlik et al.~\cite{PKK+13} proved that intersection graphs of objects obtained by independent horizontal and vertical scaling and translation of some 
fixed template object different than an axis-parallel rectangle, are not $\chi$-bounded. 
In particular, this shows that L-graphs, segment graphs, and string graphs are not $\chi$-bounded.
Fox and Pach~\cite{FP14} showed that every $n$-vertex string graph satisfies $\chi = (\log{n})^{O(\log{\omega})}$.
From below, Krawczyk and Walczak~\cite{KW17} constructed string graphs satisfying $\chi = \Theta_{\omega}((\log\log{n})^{\omega-1})$.
It is possible that all string graphs have chromatic number of order at most $(\log \log {n})^{f(\omega)}$ for some function $f\colon \mathbb{N} \to \mathbb{N}$.
So far, such a bound is proved for some particular, still not $\chi$-bounded, subclasses of string graphs~\cite{KPW15,KW17,Walczak20}. 

A quite different picture emerges when we consider intersection graphs of so-called ``grounded objects''.
Following terminology introduced by Cardinal et al.~\cite{CardinalFMTV18} (used in a similar form by Jel{\'{\i}}nek and T{\"{o}}pfer~\cite{JelTop19}), 
we say that a family of objects in the plane is \emph{grounded} if all these objects 
are contained in a closed half-plane and intersect the boundary of this half-plane, called a \emph{grounding line}\footnote{Note that for the class of grounded L-graphs, we may always take the grounding line to be the $x$-axis, and the half-plane to lie above the $x$-axis.}.
Jel{\'{\i}}nek and T{\"{o}}pfer~\cite{JelTop19} were the first who systematically studied relations (containments and separations) 
between most known classes of this kind. 
In particular, they considered the following classes of graphs:
\begin{itemize}[leftmargin=5.5mm]
 \item \emph{outerstring graphs} -- intersection graphs of \emph{grounded strings}, that is, strings whose one end intersects the horizontal grounding line
 and the remaining points are contained above the line.
 \item \emph{outer-$1$-string graphs} -- intersection graphs of simple families of grounded strings,
 where a family of strings is \emph{simple} if any two of them intersect in at most one point.
 \item \emph{grounded segment graphs} -- intersection graphs of \emph{grounded segments}, that is, segments so that one end intersects the grounding line.
 \item \emph{grounded $\{\mathrm{L},\text{\reflectbox{\upshape L}}\}$-graphs} -- intersection graphs of grounded L-shapes and their mirror images, grounded $\text{\reflectbox{L}}$-shapes, 
 where a \emph{grounded L-shape} is an L-shape with the bottommost point on the grounding line.
 An argument of Middendorf and Pfeiffer~\cite{MiddendorfP92} shows that grounded $\{\mathrm{L},\text{\reflectbox{L}}\}$-graphs form a subclass of grounded segment graphs.
 \item \emph{grounded L-graphs} -- intersection graphs of grounded L-shapes.
 \item \emph{circle graphs} -- intersection graphs of a family of chords in a circle.
 Circle graphs can be equivalently defined as intersection graphs of \emph{grounded semicircles}, that is, semicircles with both ends touching the grounding line.
 The class of circle graphs is contained in the class of grounded L-graphs -- see Figure~\ref{fig:circle_graphs}.

\begin{figure}[h]
\centering
\begin{tikzpicture}[xscale=0.2,yscale=0.2,>=latex]

\begin{scope}[shift={(-38,0)}]
\draw[dashed,-] (-8,0) -- (8,0);
\coordinate (center) at (0,4) {};
\draw[thick, brown] ($(center)+(150:4cm)$) -- ($(center)+(20:4cm)$);
\draw[thick, blue] ($(center)+(210:4cm)$) -- ($(center)+(340:4cm)$);

\draw[thick, red] ($(center)+(120:4cm)$) -- ($(center)+(240:4cm)$);
\draw[thick, green] ($(center)+(70:4)$) -- ($(center)+(300:4cm)$);

\draw[thick,|->] ([shift=(180:4cm)]0,4) arc (180:540:4cm);
\end{scope}

\begin{scope}[shift={(-19,0)}]
\draw[thick,|->] (-8,0) -- (8,0);
\draw[thick,red] ([shift=(0:5cm)]0,0) arc (0:180:5cm);
\draw[thick,green] ([shift=(0:2.75cm)]0,0) arc (0:180:2.75cm);
\draw[thick,blue] ([shift=(0:2.75cm)]-4,0) arc (0:180:2.75cm);
\draw[thick,brown] ([shift=(0:2.75cm)]4,0) arc (0:180:2.75cm);
\end{scope}

\begin{scope}[shift={(0,0)}]
\draw[thick,|->] (-8,0) -- (8,0);
\draw[thick,red] (-5,0) -- (0,5) -- (5,0);
\draw[thick,green] (-3,0) -- (0,3) -- (3,0);
\draw[thick,blue] (-7,0) -- (-4,3) -- (-1,0);
\draw[thick,brown] (1,0) -- (4,3) -- (7,0);
\end{scope}

\begin{scope}[shift={(19,0)},yscale=0.5,xscale=0.85]
\draw[thick,dashed,->] (-8,0) -- (8.5,16.5);
\draw[thick,-] (-9,0) -- (9,0);
\draw[thick,red] (-5,0) -- (-5,13) -- (5,13);
\draw[thick,green] (-3,0) -- (-3,11) -- (3,11);
\draw[thick,blue] (-7,0) -- (-7,7) -- (-1,7);
\draw[thick,brown] (1,0) -- (1,15) -- (7,15);
\end{scope}

\end{tikzpicture}
\caption{Different representations of a circle graph. A circle graph as a grounded $L$-graph.}
\label{fig:circle_graphs}
\end{figure} 
 \item \emph{permutation graphs} -- intersection graphs of a family of segments spanned between two horizontal lines.
 Permutation graphs can be equivalently defined as intersection graphs of semicircles which are pairwise grounded on intersecting intervals; as such, they
 are contained in the class of circle graphs -- see Figure~\ref{fig:permutation_interval_graphs}.
\begin{figure}[h]
\centering
\begin{tikzpicture}[xscale=0.2,yscale=0.2,>=latex]

\begin{scope}[shift={(-29,0)}]
\draw[dashed,-] (-7,0) -- (7,0);
\draw[thick,-] (-5,6) -- (5,6);
\draw[thick,-] (-5,0) -- (5,0);

\draw[red,thick,-] (-3,0) -- (-1,6);

\draw[blue,thick,] (-1,0) -- (3,6);

\draw[green,thick,-] (1,0) -- (-3,6);

\draw[brown,thick,-] (3,0) -- (1,6);

\end{scope}

\begin{scope}[shift={(-11,0)}]
\draw[thick,-] (-8.5,0) -- (8.5,0);
\draw[dashed,-] (0,0) -- (0,8);

\draw[thick,red] ([shift=(0:6.5cm)]-1,0) arc (0:180:6.5cm);

\draw[thick,green] ([shift=(0:5.5cm)]2,0) arc (0:180:5.5cm);

\draw[thick,blue] ([shift=(0:3.5cm)]-2,0) arc (0:180:3.5cm);

\draw[thick,brown] ([shift=(0:2.5cm)]1,0) arc (0:180:2.5cm);

\end{scope}

\begin{scope}[shift={(11,0)}]
\draw[thick,-] (-8,0) -- (8,0);

\draw[red,thick,|-|] (-5,1.5) -- (2,1.5);
\draw[thick,red,dotted,-] (-5,1.5) -- (-5,0);
\draw[thick,red,dotted,-] (2,1.5) -- (2,0);

\draw[blue,thick,|-|] (-3,3) -- (0,3);
\draw[thick,blue,dotted,-] (-3,3) -- (-3,0);
\draw[thick,blue,dotted,-] (0,3) -- (0,0);

\draw[green,thick,|-|] (-1,4.5) -- (4,4.5);
\draw[thick,green,dotted,-] (-1,4.5) -- (-1,0);
\draw[thick,green,dotted,-] (4,4.5) -- (4,0);

\draw[brown,thick,|-|] (3,3) -- (5,3);
\draw[thick,brown,dotted,-] (3,3) -- (3,0);
\draw[thick,brown,dotted,-] (5,3) -- (5,0);

\end{scope}

\begin{scope}[shift={(29,4)},yscale=0.5]
\draw[thick,-] (-8,-8) -- (8,8);
\draw[thick,-] (-8,-8) -- (8,-8);

\draw[red,thick,-] (-4,-8) -- (-4,-4) -- (3,-4);

\draw[blue,thick,-] (-2,-8) -- (-2,-2) -- (1,-2);

\draw[green,thick,-] (0,-8) -- (0,0) -- (5,0);

\draw[brown,thick,-] (4,-8) -- (4,4) -- (6,4);

\end{scope}

\end{tikzpicture}
\caption{To the left: different representations of a permutation graph. To the right: an interval graph as a monotone $L$-graph and as a grounded $L$-graph.}
\label{fig:permutation_interval_graphs}
\end{figure}
 
 \item \emph{monotone L-graphs}, also called as \emph{max point-tolerance graphs} -- 
 intersection graphs of a family of L-shapes all of whose bends belong to a common upward-sloping line.
 \item \emph{interval graphs} -- intersection graphs of intervals contained in the grounding line. 
 The class of interval graphs is contained in the class of grounded L-graphs and in the class of monotone L-graphs -- see Figure~\ref{fig:permutation_interval_graphs}.
\end{itemize}
In the context of the coloring problem, the following two classes of intersection graphs of grounded objects were considered:
\begin{itemize}[leftmargin=5.5mm]
 \item \emph{interval filament graphs} -- intersection graphs of \emph{interval filaments}, 
 which are continuous non-negative functions defined on closed intervals attaining zero values at
 their endpoints. 
 Interval filament graphs are contained in the class of outerstring graphs (two interval filaments may intersect in many points).
 \item \emph{polygon-circle graphs} -- intersection graphs of polygons inscribed into a circle.
 Polygon-circle graphs can be equivalently defined as intersection graphs of \emph{circle filaments}, which are 
 interval filaments consisting of some number of externally touching grounded semicircles.
 In particular, polygon-circle graphs form a subclass of interval filament graphs
 and extend the classes of circle graphs and interval graphs -- see Figure~\ref{fig:polygon_circle_graphs}.
\begin{figure}[h]
\centering
\begin{tikzpicture}[xscale=0.2,yscale=0.2,>=latex]

\begin{scope}[shift={(-19,0)}]
\draw[dashed,-] (-8,0) -- (8,0);
\coordinate (center) at (0,4) {};

\coordinate (P1) at ($(center)+(210:4cm)$) {};
\coordinate (P2) at ($(center)+(240:4cm)$) {};
\coordinate (P3) at ($(center)+(270:3.96cm)$) {};
\coordinate (P4) at ($(center)+(300:4cm)$) {};
\coordinate (P5) at ($(center)+(330:4cm)$) {};
\coordinate (P6) at ($(center)+(0:4cm)$) {};
\coordinate (P7) at ($(center)+(30:4cm)$) {};
\coordinate (P8) at ($(center)+(60:4cm)$) {};
\coordinate (P9) at ($(center)+(90:3.92cm)$) {};
\coordinate (P10) at ($(center)+(120:4cm)$) {};
\coordinate (P11) at ($(center)+(150:3.85cm)$) {};

\begin{scope}[fill opacity=0.5]
\draw[fill=green!30] (P3)--(P5)--(P6)--(P9)--cycle;
\draw[fill=red!30] (P7)--(P8)--(P11)--cycle;
\end{scope}
\draw[brown, thick] (P1)--(P4);
\draw[blue, thick] (P2)--(P10);
\draw[green, thick] (P3)--(P5)--(P6)--(P9)--cycle;
\draw[red, thick] (P7)--(P8)--(P11)--cycle;

\draw[thick,|->] ([shift=(180:4cm)]0,4) arc (180:540:4cm);
\end{scope}

\begin{scope}[shift={(0,0)}, xscale=2,yscale=2]
\draw[thick,|->] (-4,0) -- (4.2,0);
\draw[very thick,white] (-4,3) -- (4,3);
\draw[thick, blue] ([shift=(0:3cm)]0,0) arc (0:180:3cm);
\draw[thick, brown] ([shift=(0:1.25cm)]-2.25,0) arc (0:180:1.25cm);
\draw[thick, green] ([shift=(0:0.75cm)]-0.75,0) arc (0:180:0.75cm);
\draw[thick, green] ([shift=(0:0.5cm)]0.5,0) arc (0:180:0.5cm);
\draw[thick, green] ([shift=(0:0.75cm)]1.75cm,0) arc (0:180:0.75cm);
\draw[thick, red] ([shift=(0:0.25cm)]1.75cm,0) arc (0:180:0.25cm);
\draw[thick, red] ([shift=(0:0.75cm)]2.75cm,0) arc (0:180:0.75cm);
\end{scope}

\begin{scope}[shift={(25,0)},xscale=1.5,yscale=1.5]
\draw[dashed,-] (-8,0) -- (7,0);
\draw[red, thick,|-|] (-6,-0.2)--(4,-0.2);
\draw[red,thick] ([shift=(0:1cm)]-5,0) arc (0:180:1cm);
\draw[red,thick] ([shift=(0:1cm)]-3,0) arc (0:180:1cm);
\draw[red,thick] ([shift=(0:1cm)]-1,0) arc (0:180:1cm);
\draw[red,thick] ([shift=(0:1cm)]1,0) arc (0:180:1cm);
\draw[red,thick] ([shift=(0:1cm)]3,0) arc (0:180:1cm);

\draw[green, thick,|-|] (-5,-0.4)--(1,-0.4);
\draw[green,thick] ([shift=(0:1cm)]-4,0) arc (0:180:1cm);
\draw[green,thick] ([shift=(0:1cm)]-2,0) arc (0:180:1cm);
\draw[green,thick] ([shift=(0:1cm)]0,0) arc (0:180:1cm);

\draw[blue, thick,|-|] (3,-0.4)--(5,-0.4);
\draw[blue,thick] ([shift=(0:1cm)]4,0) arc (0:180:1cm);

\end{scope}
\end{tikzpicture}
\caption{To the left: a polygon-circle graph and its representation as an intersection graph of circle filaments. To the right: an interval graph as a polygon-circle graph.}
\label{fig:polygon_circle_graphs}
\end{figure}
 
\end{itemize}
Figure~\ref{fig:graph_classes_hierarchy} presents a diagram showing all possible inclusions between the graph classes defined above (so any inclusions which are not depicted are known to not exist). 
It also depicts the best known upper and lower bounds for $\chi$ in terms of $\omega$, to the best of our knowledge. In the rest of the introduction, we describe where the depicted inclusions/non-inclusions and upper/lower bounds come from. 

\begin{figure}[h]
\centering
\begin{tikzpicture}[xscale=0.45,yscale=0.55,>=latex]

\draw[thick,->] (-9.5,1)--(-9.5,2);
\draw[thick,->] (-9.5,4)--(-10.5,6);

\draw[thick,->] (-10.5,8)--(-10.5,9);
\draw[thick,->] (-10.5,11)--(-10.5,12);
\draw[thick,->] (-10.5,14)--(-10.5,15);
\draw[thick,->] (-8,17.03)--(-2,18.97);

\draw[thick,->] (-5.5,4)--(0,7.5);

\draw[thick,->] (6,2.5)--(4,7.5);

\draw[thick,->] (9,2.5)--(13.75,9);

\draw[thick,->] (2.5,9.5)--(2.5,12.5);

\draw[thick,->] (2.5,14.5)--(2.5,19);

\draw[thick,->] (13,13)--(6.5,19);

\draw[thick,->] (2.5,2.5)--(-5.5,6);

\begin{scope}[shift={(-15,-1)}]
\draw (0,0) rectangle (11,2);
\begin{scriptsize}
\node[above] at (4,1.0) {permutation graphs};
\node[above] at (4,0.2) {$\chi = \omega$};
\end{scriptsize}

\begin{scope}[shift={(8,0)}]
\draw (0,0.3) -- (2.5,0.3);
\draw[dashed] (0,1.7) -- (2.5,1.7);

\draw[red,thick] (0.5,0.3) -- (1,1.7);
\draw[green,thick] (1,0.3) -- (2,1.7);
\draw[blue,thick] (1.5,0.3) -- (0.5,1.7);
\draw[brown,thick] (2,0.3) -- (1.5,1.7);

\end{scope}
 \end{scope}

\begin{scope}[shift={(2,0.5)}]
\draw (0,0) rectangle (11,2);
\begin{tiny}
\node[above] at (4,1) {interval graphs};
\node[above] at (4,0.2) {$\chi = \omega$};
\end{tiny}

\begin{scope}[shift={(8,0)}]
\draw (0,0.3) -- (2.5,0.3);

\draw[red,thick,-] (0.3,0.7) -- (1,0.7);
\draw[red,dotted] (0.3,0.7) -- (0.3,0.3);
\draw[red,dotted] (1,0.7) -- (1,0.3);
\draw[thick,red] (0.3,0.6) -- (0.3,0.8);
\draw[thick,red] (1,0.6) -- (1,0.8);

\draw[green,thick,-] (1.3,0.7) -- (2.0,0.7);
\draw[green,dotted] (1.3,0.7) -- (1.3,0.3);
\draw[green,dotted] (2,0.7) -- (2.0,0.3);
\draw[thick,green] (1.3,0.6) -- (1.3,0.8);
\draw[thick,green] (2,0.6) -- (2.0,0.8);

\draw[blue,thick,-] (0.5,1.1) -- (1.5,1.1);
\draw[blue,dotted] (0.5,1.1) -- (0.5,0.3);
\draw[blue,dotted] (1.5,1.1) -- (1.5,0.3);
\draw[thick,blue] (0.5,1.0) -- (0.5,1.2);
\draw[thick,blue] (1.5,1.0) -- (1.5,1.2);

\draw[brown,thick,-] (1.7,1.1) -- (2.2,1.1);
\draw[brown,dotted] (1.7,1.1) -- (1.7,0.3);
\draw[brown,dotted] (2.2,1.1) -- (2.2,0.3);
\draw[thick,brown] (1.7,1) -- (1.7,1.2);
\draw[thick,brown] (2.2,1) -- (2.2,1.2);

\end{scope}
 \end{scope}

\begin{scope}[shift={(-15,2)}]
\draw (0,0) rectangle (11,2);
\begin{tiny}
\node[above] at (4,1) {circle graphs};
\node[above] at (4,0.1) {$\chi = \Theta(\omega \log{ \omega})$};
\end{tiny}
\begin{scope}[shift={(8,0)},xscale=1.2]

\coordinate (center) at (1.15,1) {};
\draw (center) circle (0.8cm);

\draw[thick, brown] ($(center)+(150:0.8cm)$) -- ($(center)+(20:0.8cm)$);
\draw[thick, blue] ($(center)+(210:0.8cm)$) -- ($(center)+(340:0.8cm)$);
\draw[thick, red] ($(center)+(120:0.8cm)$) -- ($(center)+(300:0.8cm)$);
\draw[thick, green] ($(center)+(60:0.8)$) -- ($(center)+(240:0.8cm)$);

\end{scope}
\end{scope}

\begin{scope}[shift={(-16,6)}]
\draw (0,0) rectangle (11,2);
\begin{tiny}
\node[above] at (4,1) {grounded $L$-graphs};
\node[above] at (4,0.1) {$\Omega(\omega \log{ \omega}) \leq \chi \leq O(\omega^4)$};
\end{tiny}
\begin{scope}[shift={(8,0)}]
\draw (0,0.3) -- (2.5,0.3);

\draw[thick,red] (0.5,0.3) -- (0.5,1)--(1.5,1);
\draw[thick,green] (1,0.3) -- (1,1.4)--(2,1.4);
\draw[thick,blue] (1.5,0.3) -- (1.5,0.7)--(1.9,0.7);
\draw[thick,brown] (1.8,0.3) -- (1.8,1.7)--(2.2,1.7);

\end{scope}
\end{scope}

\begin{scope}[shift={(-3,7.5)}]
\draw (0,0) rectangle (11,2);
\begin{tiny}
\node[above] at (4,1) {polygon-circle graphs};
\node[above] at (4,0.2) {$\Omega(\omega \log{ \omega}) \leq \chi \leq O(\omega^2)$};
\end{tiny}
\begin{scope}[shift={(8,0)},xscale=1.2]

\coordinate (center) at (1.15,1) {};

\coordinate (P1) at ($(center)+(210:0.8cm)$) {};
\coordinate (P2) at ($(center)+(240:0.8cm)$) {};
\coordinate (P3) at ($(center)+(245:0.8cm)$) {};
\coordinate (P4) at ($(center)+(300:0.8cm)$) {};
\coordinate (P5) at ($(center)+(330:0.8cm)$) {};
\coordinate (P6) at ($(center)+(0:0.8cm)$) {};
\coordinate (P7) at ($(center)+(30:0.8cm)$) {};
\coordinate (P8) at ($(center)+(60:0.8cm)$) {};
\coordinate (P9) at ($(center)+(90:0.8cm)$) {};
\coordinate (P10) at ($(center)+(120:0.8cm)$) {};
\coordinate (P11) at ($(center)+(170:0.75cm)$) {};

\begin{scope}[fill opacity=0.5]
\draw[fill=green!30] (P3)--(P5)--(P6)--(P9)--cycle;
\draw[fill=red!30] (P7)--(P8)--(P11)--cycle;
\draw[fill=blue!30] (P1)--(P4)--(P10);
\end{scope}
\draw[blue] (P1)--(P4)--(P10)--cycle;
\draw[green] (P3)--(P5)--(P6)--(P9)--cycle;
\draw[red] (P7)--(P8)--(P11)--cycle;

\draw (center) circle (0.8cm);
\end{scope}
\end{scope}

\begin{scope}[shift={(-3,12.5)}]
\draw (0,0) rectangle (11,2);
\begin{tiny}
\node[above] at (4,1) {interval filament graphs};
\node[above] at (4,0.1) {${\omega +1 \choose 2} \leq \chi \leq O(\omega^3\log{\omega})$};
\end{tiny}
\begin{scope}[shift={(8,0)}]
\draw (0,0.3) -- (2.5,0.3);

\draw[thick, red, rounded corners=4] (0.25,0.3) -- (0.27,0.75) -- (1.25,0.75)  -- (1.27,1.7) -- (1.9,1.7) -- (2,0.3);
\draw[thick, blue, rounded corners=4] (0.5,0.3) -- (0.6,1.2) -- (1.4,1) -- (1.5,0.3);
\draw[thick, brown, rounded corners=2] (0.75,0.3) -- (0.8,0.6) -- (1.2,0.6) -- (1.25,0.3);
\draw[thick, green, rounded corners=2] (1.75,0.3) -- (1.85,1) -- (2.15,1) -- (2.25,0.3);
\end{scope}
\end{scope}

\begin{scope}[shift={(-16,9)}]
\draw (0,0) rectangle (11,2);
\begin{tiny}
\node[above] at (4,1) {grounded $\{\mathrm{L},\text{\reflectbox{$\mathrm{L}$}}\}$-graphs};
\node[above] at (4,0.1) {$\Omega(\omega \log{ \omega}) \leq \chi \leq O(\omega^4)$};
\end{tiny}
\begin{scope}[shift={(8,0)}]
\draw (0,0.3) -- (2.5,0.3);
\draw[thick,red] (0.5,0.3) -- (0.5,1)--(1.5,1);
\draw[thick,green] (1,0.3) -- (1,1.4)--(2.2,1.4);
\draw[thick,blue] (1.5,0.3) -- (1.5,0.7)--(0.3,0.7);
\draw[thick,brown] (2,0.3) -- (2,1.7)--(1.5,1.7);

\end{scope}
\end{scope}

\begin{scope}[shift={(-16,12)}]
\draw (0,0) rectangle (11,2);
\begin{tiny}
\node[above] at (4,1) {grounded segment graphs};
\node[above] at (4,0.1) {$\Omega(\omega \log{ \omega}) \leq \chi \leq 2^{O(2^\omega)}$};
\end{tiny}
\begin{scope}[shift={(8,0)}]
\draw (0,0.3) -- (2.5,0.3);
\draw[thick,red] (0.5,0.3) --(1.5,1.5);
\draw[thick,green] (1,0.3) --(2.2,1.4);
\draw[thick,blue] (1.5,0.3)--(0.3,0.7);
\draw[thick,brown] (2,0.3)--(1,1.7);

\end{scope}
\end{scope}

\begin{scope}[shift={(-16,15)}]
\draw (0,0) rectangle (11,2);
\begin{tiny}
\node[above] at (4,1) {outer-$1$-string graphs};
\node[above] at (4,0.1) {$\Omega(\omega \log{ \omega}) \leq \chi \leq 2^{O(2^\omega)}$};
\end{tiny}
\begin{scope}[shift={(8,0)}]
\draw (0,0.3) -- (2.5,0.3);
\draw[thick,red,use Hobby shortcut] (0.5,0.3) .. (0.7,1.2) .. (1.5,1.1) .. (1.75,1.2);
\draw[thick,green,use Hobby shortcut] (1,0.3) .. (1.2,0.7) .. (1.5,0.7) .. (2.3,1.7);
\draw[thick,blue,use Hobby shortcut] (1.5,0.3).. (1.6,0.7) .. (1,1)..(0.7,1.6)..(0.25,1.6);
\draw[thick,brown,use Hobby shortcut] (2,0.3) .. (2.1,1).. (1.8,1.7).. (1.5,1);
\end{scope}
\end{scope}

\begin{scope}[shift={(-4,19)}]
\draw (0,0) rectangle (13,2);
\begin{tiny}
\node[above] at (5,1) {outerstring graphs};
\node[above] at (5,0.1) {${\omega +1 \choose 2} \leq \chi \leq 2^{O(2^{\omega(\omega-1)/ 2})}$};
\end{tiny}
\begin{scope}[shift={(10,0)}]
\draw (0,0.3) -- (2.5,0.3);
\draw[thick,red,use Hobby shortcut] (0.5,0.3) .. (0.5,0.7) .. (1.5,0.8) .. (2,1.7);
\draw[thick,green,use Hobby shortcut] (1,0.3) .. (1,1) .. (1,1.3) .. (1.5,1.3).. (2.25,1.3);
\draw[thick,blue,use Hobby shortcut] (1.5,0.3).. (1.5,0.8) .. (1.5,1.5) .. (1.3,1.7) .. (0.9,1.7)..(0.7,1.3)..(0.7,0.6);
\draw[thick,brown,use Hobby shortcut] (2,0.3) .. (2,0.4).. (1.8,0.5).. (0.8,0.5);
\end{scope}
\end{scope}

\begin{scope}[shift={(10,9)}]
\draw (0,0) rectangle (7.5,4);
\begin{tiny}
\node[above] at (2.25,2.5) {monotone};
\node[above] at (2.25,1.5) {$L$-graphs};
\node[above] at (2.4,0.5) {$\chi \leq O(\omega \log{\omega})$};
\end{tiny}
\begin{scope}[shift={(4.5,1.7)},yscale=0.68]
\draw (0,0) -- (2.5,2.5);
\draw[red,thick] (0.5,0) -- (0.5,0.5) --(2.3,0.5);
\draw[green,thick] (1,0.65) -- (1,1) --(2.2,1);
\draw[blue,thick] (1.5,0.2) -- (1.5,1.5) --(1.8,1.5);
\draw[brown,thick] (2,0.75) -- (2,2) --(2.4,2);

\end{scope}
\end{scope}

\end{tikzpicture}
\caption{Hierarchy of inclusions between graph classes considered in the introduction. An arrow indicates inclusion.}
\label{fig:graph_classes_hierarchy}
\end{figure}

It is commonly known that interval and permutation graphs are perfect\footnote{A graph is \emph{perfect} if $\chi=\omega$ holds for all of its induced subgraphs.} (see~\cite{golumbicBook}, for instance), 
which means that they satisfy $\chi = \omega$.
Circle graphs and monotone L-graphs are no longer perfect, and hence no other class shown in the diagram is perfect.

Catanazaro et al.~\cite{CatanzaroCFHHHS17} observed that monotone L-graphs can be alternatively defined as intersection graph of 
rectangles whose top-left vertices lie in a common upward-sloping line.
In such a representation no rectangle is contained in another, which proves that monotone L-graphs satisfy $\chi = O(\omega\log{\omega})$ by the result of Chalermsook~\cite{Cha11}.
Due to our knowledge, no non-trivial lower bound on the chromatic number in this class of graphs is known.

The study of chromatic properties of circle graphs was initiated by Gy{\'{a}}rf{\'{a}}s~\cite{Gyarfas85}, who proved $\chi = O(\omega^24^\omega)$ for this class of graphs.
This bound was later improved to $\chi = O(\omega ^22^\omega)$ by Kostochka~\cite{Kos88}, and to $\chi = O(2^\omega)$ by Kostochka and Kratochv\'{i}l~\cite{KK97}.
It is worth mentioning that the bound by Kostochka and Kratochv\'{i}l was actually obtained for polygon-circle graphs.
From below, Kostochka~\cite{Kos88,Kos04} constructed circle graphs that satisfy $\chi \ge \frac{1}{2}\omega(\ln \omega -2)$, and very recently Davies~\cite{davies2021circle} improved this slightly to~${\chi \ge \omega(\ln \omega -2)}$. 
This lower bound is currently the best in the class of outer-$1$-string graphs and polygon-circle graphs.
Grounded L-graphs were introduced by McGuinness~\cite{McGuinness96}, who showed that they satisfy $\chi  = 2^{O(4^\omega)}$~\cite{McGuinness96}.
McGuinness~\cite{McGuinness00} also proved that the chromatic number of triangle-free intersection graphs of simple\footnote{A family of grounded objects is \emph{simple} 
if the intersection of any subset of them is arc-connected.} families of grounded objects
is bounded by a constant~\cite{McGuinness00}.
Suk~\cite{Suk14} showed that intersection graphs of grounded simple $y$-monotone\footnote{A curve is \emph{$y$-monotone} if it intersects every horizontal line in at most one point.} curves satisfies $\chi=2^{O(5^\omega)}$.
In particular, Suk's result proves $\chi=2^{O(5^\omega)}$ for grounded segment graphs.
The recent two results were extended by Laso\'{n}, Micek, Pawlik, and Walczak~\cite{LMPW14}, who showed that intersection graphs 
of a family of simple grounded objects satisfy $\chi=2^{O(2^\omega)}$.
In particular, this proves $\chi=2^{O(2^\omega)}$ for outer-1-string graphs.
The research on the chromatic properties of intersection graphs of grounded objects has culminated in the work of Rok and Walczak~\cite{RW19}, 
who showed that outerstring graphs satisfy $\chi = 2^{O(2^{\omega(\omega-1)/ 2})}$.
Krawczyk and Walczak~\cite{KW17} considered colorings of interval filament graphs.
They showed that every interval filament graph satisfies $\chi \leq {\omega +1 \choose 2} \cdot \chi_{\mathrm{circ},\omega}$, 
where $\chi_{\mathrm{circ},\omega}$ is the maximum chromatic number of circle graph with clique number~$\omega$.
From below, Krawczyk and Walczak~\cite{KW17} constructed interval filament graphs that satisfy $\chi = \binom{\omega +1}{2}$, 
which seems to be the best lower bound known to date for outerstring graphs.

Note that the above-mentioned upper bounds on the chromatic number of graph classes extending circle graphs are at least exponential.
Introducing an entirely new technique, 
Davies and McCarty~\cite{DaviesMcCarty} showed that circle graphs satisfy $\chi = (1 + o (1)) \omega^2$,
thus obtaining the first polynomial upper bound on the chromatic number for a non-trivial class of intersection graphs of grounded objects.
Very recently Davies~\cite{davies2021circle} extended these techniques to further improve this bound to $(2+o (1)) \omega \log_2 \omega $. Thus circle graphs satisfy $\chi = \Theta(\omega \log \omega)$ by the aforementioned lower bound construction of Kostochka~\cite{Kos88,Kos04}.
Since circle graphs satisfy $\chi = O(\omega \log \omega)$, the above mentioned result of Krawczyk and Walczak proves 
that interval filament graphs (and hence polygon-circle graphs) satisfy $\chi = O(\omega^3 \log \omega)$.
Our main result adds to the list of known polynomially $\chi$-bounded classes of graphs. To be more precise,  Theorem~\ref{thm:main} shows that grounded L-graphs satisfy $\chi = O(\omega^4)$.

Clearly, the same asymptotic upper bound holds for grounded $\{\mathrm{L},\text{\reflectbox{L}}\}$-graphs 
(we color grounded L-shapes and grounded $\text{\reflectbox{L}}$-shapes with different sets of colors).
Furthermore, we also note that a simple twist in the proof of Davies and McCarty allows to  
show the bound $\chi = (1 + o (1)) \omega^2$ also for polygon-circle graphs.
Interestingly this appears to fail for the proof of the $(2+o (1)) \omega \log_2 \omega $ bound.
We will discuss this matter further, including the modifications required for proving the $(1 + o (1)) \omega^2$ bound for polygon-circle graphs in Section \ref{sec:prelim}.
Clearly, questions about polynomial $\chi$-boundedness for other graph classes shown in Figure~\ref{fig:graph_classes_hierarchy} remain open.
In particular, it is tempting to ask whether the existing techniques can be extended to prove the following conjecture.

\begin{conjecture}
The class of grounded segment graphs is polynomially $\chi$-bounded.
\end{conjecture}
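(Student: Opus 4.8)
The plan is to push the pillar-based argument behind Theorem~\ref{thm:main} as far as it will go, and to pin down exactly where it breaks. I would first reduce to a clean instance: the family of vertical grounded segments can be colored on its own (it is a disjoint union of cliques, so it needs at most $\omega$ colors), and the remaining segments split into those that go up and to the left and those that go up and to the right, which we color with disjoint palettes. So it suffices to bound $\chi$ by a polynomial in $\omega$ for a family of grounded segments each of which goes up and to the right; identify such a segment $S$ with its grounding point $g(S)$ on the $x$-axis and its top endpoint $t(S) = (a_S, b_S)$, where $a_S > g(S)$ and $b_S > 0$, and for a height $h \le b_S$ let $x_S(h)$ denote the (affine in $h$) horizontal coordinate of the point of $S$ at height $h$. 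The intersection rule is then clean: for $g(S) < g(S')$ the segments $S$ and $S'$ cross exactly when $x_S(h) \ge x_{S'}(h)$ at $h = \min(b_S, b_{S'})$, that is, when the lower of the two top endpoints sits, at its own height, on the far side of the other segment. This should be contrasted with the ``staircase'' rule for grounded L-shapes, where for $g(S) < g(S')$ one has $S \cap S' \neq \emptyset$ iff the corner of $S$ is no higher than the corner of $S'$ \emph{and} $S$ reaches horizontally past $g(S')$ --- a conjunction of two independent threshold conditions. For segments the quantity $x_{S'}(\min(b_S, b_{S'}))$ depends on the slope of the \emph{other} segment, so the condition is no longer a conjunction of thresholds, and the monotone staircase structure that the proof of Theorem~\ref{thm:main} exploits when it tracks how objects meet a fixed system of pillars is gone.

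To cope with this I would try to partition the up-and-to-the-right segments further --- by slope, or more robustly by the order type of the top endpoints relative to the grounding points --- so that on each part the family behaves monotonically enough for a pillar argument of the type used for Theorem~\ref{thm:main} to apply, and so that recombining the parts costs only a polynomial factor in the number of colors; alternatively, one might hope for a more robust version of the pillar machinery that tolerates segments directly. In either case the inductive skeleton would presumably mirror the proof of Theorem~\ref{thm:main}: assuming $\chi$ is large, locate a vertex together with a large, highly structured set of neighbours, and then either extract a clique of size $\omega + 1$ or peel off a subfamily on which one can recurse with a smaller parameter.

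The main obstacle --- and the reason this remains a conjecture --- is that I do not see how to carry out such a partition while keeping the clique number under control. Naive attempts to replace each up-and-to-the-right segment by the grounded L-shape running along the top-left boundary of its bounding box fail on two fronts: if the \emph{taller} segment of a crossing pair is grounded on the left, then the L-replacement loses that edge; and if one restricts to the complementary regime, in which the top-heights increase along the grounding line so that the replacement does preserve every edge, then one can arrange arbitrarily many pairwise non-crossing grounded segments whose L-replacements are pairwise crossing, so the clique number of the replacement graph is unbounded in $\omega$. A successful argument must therefore recover monotone structure \emph{and} preserve $\omega$ simultaneously, a tension that simply does not arise for grounded L-shapes; a sensible first target would be the subcase in which the slopes vary monotonically along the grounding line, or the top endpoints are in convex position, where a single part of the partition above already covers everything.
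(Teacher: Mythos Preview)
This statement is posed in the paper as an open \emph{conjecture}, not a theorem; the paper gives no proof and explicitly says that the question of polynomial $\chi$-boundedness for grounded segment graphs ``remain[s] open''. Your proposal is accordingly not a proof either, and you say so yourself (``the reason this remains a conjecture''). There is therefore no proof in the paper to compare against.

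That said, your discussion of obstacles is reasonable and complements the paper's own brief remarks. The paper locates the difficulty at the level of \emph{defining a workable pillar}: for grounded L-shapes the staircase pillars of Section~\ref{sec:pillarA} are tailored so that (i) the L-shapes meeting a single pillar induce a perfect graph (Lemma~\ref{pillar 2 permutations}), (ii) a divide-and-conquer step works, and (iii) the extremal lemmas of Section~\ref{sec:extremal} go through; the paper says all three steps depend on the chosen pillar notion and that this is the ``major roadblock'' for other classes. Your analysis is more concrete about \emph{why} segments resist this machinery: the intersection rule for two up-right grounded segments is not a conjunction of independent threshold conditions (the crossing test involves the slope of the other segment), so the monotone staircase structure underlying Lemmas~\ref{pillar 2 permutations}, \ref{left}, and \ref{right} is absent, and the naive reduction to L-shapes via bounding boxes can blow up the clique number. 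Both diagnoses point to the same gap --- no workable pillar notion for grounded segments is known --- and neither you nor the paper closes it.
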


Moreover, it would also be desirable to know a tight asymptotic bound on the chromatic number for classes of graphs that are polynomially $\chi$-bounded.
In this context, polygon-circle graphs, interval filament graphs, and monotone L-graphs are worth attention.

\subsection*{Containments and separations}

A quite important branch in the study of geometric intersection graphs concerns the questions related to containment and separation relations  
between such classes of graphs; see e.g.\ Cabello and Jej\v{c}i\v{c}~\cite {CJ17} as well as Cardinal, Felsner, Miltzow, Tompkins, and Vogtenhuber
\cite{CardinalFMTV18}.
As we already mentioned, such a study for intersection graphs of grounded objects was conducted by Jel{\'{\i}}nek and T{\"{o}}pfer~\cite{JelTop19}.
In particular, they indicated all inclusions for graph classes mentioned above, except for polygon-circle graphs and interval filament graphs.
We extend their work by placing in the right place these two graph classes.
To this end we prove the following separation results.
\begin{theorem}\
\label{thm:separation}
\begin{enumerate}
 \item There is an interval filament graph which is not a polygon-circle graph.
 \item There is a grounded L-graph which is not an interval filament graph.
 \item There is an L-monotone graph which is not an interval filament graph.
 \item There is a polygon-circle graph which is not an outer-$1$-string graph.
\end{enumerate}
\end{theorem}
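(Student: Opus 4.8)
The plan is to prove each of the four statements by constructing an explicit witness graph~$G$ and checking two things: that $G$ belongs to the larger of the two classes, and that $G$ does not belong to the smaller one. The first is routine: one exhibits a representation --- interval filaments for~(1), grounded L-shapes for~(2), L-shapes with collinear bends for~(3), polygons inscribed in a circle for~(4) --- and the only issue is choosing $G$ cleverly enough that the second check can be pushed through. All the difficulty is in the non-membership direction, which must be verified uniformly over \emph{every} representation of the relevant type, including degenerate ones and ones using ``wild'' curves or polygons with enormous vertex sets.

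To handle the non-membership arguments, I would first prove, for each ``small'' class, a normalization lemma: every representation of an $n$-vertex graph can be replaced by one of finitely many combinatorial \emph{types}, recorded on a number of features bounded in terms of~$n$, and each type determines the intersection graph. For polygon-circle graphs and outer-$1$-string graphs the type records the cyclic order along the grounding circle of the relevant features --- polygon vertices, or grounded endpoints of strings --- together with the planar crossing pattern of the objects inside the disc. For interval filament graphs the type records the linear order of the domain endpoints, the nesting/overlap pattern of the domains, and the vertical order of the filaments over each cell. The lever I would then exploit is a rigidity property of interval filament representations, proved by an intermediate-value argument on the difference of two functions: if the domains of two filaments properly overlap, or are nested but share an endpoint, the filaments must intersect. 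Hence in any interval filament representation of $G$, every non-edge forces its two domains to be either disjoint or strictly nested; in particular the domains of any independent set of $G$ form a laminar interval family.

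Armed with this, for~(2) and~(3) I would choose $G$ so that its non-edges, via the rigidity property, force any interval filament representation into an impossible configuration --- for example a cyclic chain of strict containments $I_1 \subsetneq I_2 \subsetneq \dots \subsetneq I_k \subsetneq I_1$, or a conflict between the laminar structure forced on one independent set and the edges leaving that set --- while the edges of $G$ remain easy to draw with grounded (resp.\ collinear-bend) L-shapes, whose corners provide a crossing interaction that, unlike interval overlap, does not propagate. A natural candidate for $G$ is a suitably chosen split-type graph (a clique together with an independent set joined by a prescribed bipartite pattern); since the class of monotone L-graphs is quite restrictive, for~(3) it may be cleanest to pin down a small such gadget by computer search and then verify non-membership by exhausting the finitely many interval-filament types on $|V(G)|$ vertices. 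For~(1) the witness should exploit, in a similar spirit, flexibility available to interval filament representations but not to the more rigid circular structure of polygon-circle representations (one could also hope for a counting comparison, depending on how the number of $n$-vertex polygon-circle graphs behaves). For~(4) the witness should use that outer-$1$-string curves pairwise cross at most once and are grounded, so that a configuration forced into many crossings or a non-planar arrangement inside the disc cannot be redrawn simply, even though it is realizable by polygons (which may cross repeatedly).

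The main obstacle, in all four parts, is the normalization step: establishing that each class collapses to finitely many well-behaved combinatorial types on a bounded number of features, so that no exotic representation can evade the argument. Once that is in place --- together with the interval-filament rigidity lemma --- each statement reduces to a clean structural contradiction or a finite (if tedious) case check on the witness graph.
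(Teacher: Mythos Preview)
Your rigidity observation for interval filaments --- that properly overlapping domains force an intersection, hence non-edges must have disjoint or strictly nested domains, hence any independent set has laminar domains --- is exactly the engine behind the paper's approach. But the paper does not proceed by normalising to finitely many combinatorial types and then brute-forcing; instead it packages your observation into a structural \emph{Cycle Lemma}: in any interval filament representation of an induced cycle $C$ of length $n\ge 7$, all but at most four consecutive vertices of $C$ are represented by a chain of overlapping domains, and moreover any outside vertex whose $C$-neighbourhood is a subpath of this chain has its domain endpoints pinned down in the expected way. An analogous Cycle Lemma (from Cardinal et al.) controls the cyclic order of grounded endpoints in outer-$1$-string representations of a cycle. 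All four witnesses are then graphs consisting of a long induced cycle together with a handful of extra vertices whose adjacencies to the cycle, via the relevant Cycle Lemma, force an impossible configuration (two filaments that must be disjoint yet must intersect, or a string forced to cross another twice).

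This means two of your concrete suggestions point in the wrong direction. First, split-type gadgets are unlikely to help: split graphs contain no induced $C_k$ for $k\ge 4$, and it is precisely a long induced cycle that converts your laminarity observation into a rigid linear chain one can argue against. Second, the normalisation step you flag as the main obstacle --- especially for outer-$1$-string graphs, where the curves are arbitrary --- is simply bypassed by the Cycle Lemmas: one never reduces to finitely many types, one argues directly about any representation using only Jordan-curve and intermediate-value reasoning. So your plan is not wrong in spirit, but it leaves the real work (finding the witnesses and the forcing argument) unspecified, and the pieces you do specify (split gadgets, computer search, full normalisation) are either unnecessary or unsuited to the task. The missing idea is to turn ``independent sets have laminar domains'' into ``long cycles are represented by overlapping chains'', and then to hang the witness constructions off that.
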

Catanazaro et al.~\cite{CatanzaroCFHHHS17} showed a permutation graph which is not a monotone L-graph
(this shows that no superclass of permutation graphs is a subclass of monotone L-graphs). 
Cardinal et al.~\cite{CardinalFMTV18} constructed an outer-$1$-string graph which is not a grounded segment graph.
Jel{\'{\i}}nek and T{\"{o}}pfer~\cite{JelTop19} proved that:
\begin{itemize}
 \item there is a grounded segment graph which is not a grounded $\{\mathrm{L},\text{\reflectbox{L}}\}$-graph,
 \item there is a grounded $\{\mathrm{L},\text{\reflectbox{L}}\}$-graph which is not a grounded L-graph,
 \item there is a monotone L-graph which is not an outer-$1$-string graph.
\end{itemize}
All these results, together with Theorem \ref{thm:separation} and the folklore results concerning separations between interval, permutation, and circle graphs,
prove that there are no other inclusions between graph classes shown in Figure~\ref{fig:graph_classes_hierarchy} apart from those implied by the depicted arrows.

\section{Preliminaries}
\label{sec:prelim}

At a broad level our proof follows the strategy used by two of the authors to color circle graphs with clique number $\omega$ using $(1+o(1))\omega^2$ colors~\cite{DaviesMcCarty}. To explain the connection, and because it will be needed in our proof, we shall introduce circle and permutation graphs again from different perspectives.
We then sketch the proof strategy used for circle graphs in order to give an idea of how our proof for grounded L-graphs will go.
Afterwards we take a slight detour to sketch the modification required for extending the $(1+o(1))\omega^2$ bound to polygon-circle graphs and discuss the difficulty in extending the improved $O(\omega \log \omega)$ bound to polygon-circle graphs.
Then we discuss the strategy for grounded L-graphs, in particular we highlight some differences in the two proofs.
Lastly we introduce a few more definition and prove a simple reduction lemma for coloring grounded L-graphs.

\subsection*{Circle graphs}

Two open intervals \textit{overlap} if they intersect and neither is contained in the other. 
The \textit{overlap graph} of a set of open intervals ${\cal J}$ has vertex set ${\cal J}$, and two intervals are adjacent if they overlap. 
Notice that the class of circle graphs is exactly the class of overlap graphs of intervals.

Another equivalent definition of permutation graphs is that a graph is a permutation graph if there exist total orders $\prec_1$ and $\prec_2$ of 
its vertex set such that two distinct vertices are adjacent if and only if their relative order is different in $\prec_1$ and $\prec_2$. 
Permutation graphs are precisely the intersection graphs of collections of grounded L-shapes such that there exists a vertical line which intersects each L-shape exactly once. 
We will utilize this connection, giving proofs as needed.

The strategy for coloring overlap graphs of intervals $\mathcal{J}$ with clique number $\omega$ is to find some suitable finite collection of points $B\subseteq \mathbb{R}$ and to equip them with a total ordering $\prec$ and a coloring $c$. 
From such a triple $(B,\prec , c)$ we can then obtain a proper partial coloring of the overlap graph of $\mathcal{J}$ by first assigning each vertex $v$ to the first point in the total ordering $(B,\prec)$ that is contained in the interval corresponding to $v$. 
Then we give $v$ the same color as the point contained in $B$ that it is assigned to.
If we ensure that no pair of adjacent vertices are assigned to distinct points of the same color, then each monochromatic component consists of vertices assigned to a single point, and so induces a permutation graph. 
In this case we called the triple $(B,\prec , c)$ a pillar assignment (we shall use a different definition of pillar assignment for grounded L-graphs).

With this idea of pillar assignments in mind, at a basic level the proof strategy is to prove that a pillar assignment (which provides an improper partial coloring of our graph) 
that uses few colors can be extended to one that colors every vertex, still without using too many colors. 
The vertices assigned to each point of $B$ form well structured cuts of the graph, in the sense that the intervals corresponding to each connected component of the overlap graph induced by the uncolored vertices must be contained within some component of $\mathbb{R} \backslash B$, which we call a segment. 
This motivates the idea of the degree of a segment, which roughly speaking is the number of points in $B$ 
having assigned at least one interval from $\mathcal{J}$ with one endpoint in the segment. 
So the strategy is to prove that pillar assignments using a bounded number of colors and with bounded degree segments (with this bound being smaller) 
can be extended to a pillar assignment that colors every vertex, still without using too many colors.

Very roughly this is done as follows. 
Given a segment $S=(b^{-},b^{+})$ of bounded degree that we wish to extend the pillar assignment within, 
we begin by partitioning $S$ into a collection of smaller segments $\mathcal{S}= \{(b^{-},b_1),(b_1,b_2),\dots ,(b_{n-1},b^{+})\}$, each with a much smaller (but still suitably large) degree. 
From here we aim to use a divide-and-conquer argument to extend the pillar assignment $(B,\prec , c)$ to a new one  $(B^*,\prec^* , c^*)$ 
so that $\{b_1,\dots ,b_{n-1}\} \subset B^*$ and more vertices of the overlap graph are colored. 
The divide-and-conquer argument shall use colors distinct from those given to vertices with corresponding intervals having an endpoint in the segment $S$ 
(the number of which is measured by the degree of $S$). 
The number of colors required to carry out the divide-and-conquer argument is dependent on $|\mathcal{S}|$, 
and so if $|\mathcal{S}|$ is small enough then we succeed in extending the pillar assignment $(B,\prec , c)$ slightly.

The trickier case is when $|\mathcal{S}|$ is too large to carry out the divide-and-conquer argument. 
In this case we aim to derive a contradiction by finding a clique of size $\omega +1$.
This is done as follows.
First, we create a collection of intervals $\mathcal{I}$: for every point $b \in B$ and every interval $S'$ in $\mathcal{S}$, 
if $b$ contributes to the degree of $S'$, we add an interval $I$ with the endpoints in $b$ and in the middle of $S'$ to the set $\mathcal{I}$.
Moreover, for any such interval $I$ in $\mathcal{I}$ we choose an interval from $\mathcal{J}$ witnessing this contribution;
that is, an interval from $\mathcal{J}$ assigned to $b$ with an endpoint in $S'$.
Now, we complete the proof as follows. 
Since the degree of each segment contained in $\mathcal{S}$ is suitably large, and since $\mathcal{S}$ is large,
we can find in the set $\mathcal{I}$ a set of $\omega + 1$ pairwise overlapping intervals (see Lemma \ref{permutation bound}).
Eventually, the extremal lemma asserts that the witnesses associated to those intervals form a set of $\omega +1$ pairwise overlapping intervals in $\mathcal{J}$.
Such a set corresponds to a clique of size $\omega +1$ in the overlap graph of $\mathcal{J}$.

This completes the proof for circle graphs since we can always extend our pillar assignment slightly.

\subsection*{Polygon-circle graphs}

Before returning to grounded L-graphs, we briefly discuss extending bounds for circle graphs to polygon-circle graphs.
We begin with a sketch of how to tweak the proof of Davies and McCarty~\cite{DaviesMcCarty} that circle graphs have chromatic number at most $(1 + o(1))\omega^2$ 
to further apply to polygon-circle graphs.

By viewing polygon-circle graphs as intersection graphs of circle filaments, the pillars can be defined in essentially the same way as for circle graphs; 
filaments are assigned to the first point $b$ of the ordered pillars that is contained in the domain of the filament.
Then the induced subgraph corresponding to a pillar is again a perfect graph. 
With this notion of pillar, the divide-and-conquer argument is exactly the same as for circle graphs. 
The corresponding notion of degree of a segment for polygon-circle graphs is similar: it counts the number of pillars 
with at least one circle filament intersecting the segment assigned.

The main difference is then the extremal lemma required to show that the set $\mathcal{S}$ is not too large.
We create a set of intervals $\mathcal{I}$ analogously; that is, for every 
$b \in B$ and every $S' \in \mathcal{S}$ we add the interval with the endpoints in $b$ and in the middle of the interval $S'$ to $\mathcal{I}$ 
if $b$ contributes to the degree of $S'$. 
Clearly, this contribution is witnessed by a circle filament assigned to $b$ intersecting the segment $S'$. 
Here it is worth noting the main difference that distinguishes the proofs for polygon circle graphs and circle graphs:
a single circle filament might be a witness for many intervals from $\mathcal{I}$, but if this is the case, 
all of them share the same endpoint $b$ to which this circle filament is assigned.    
Again, if $\mathcal{S}$ and the degree of every segment in~$\mathcal{S}$ are large enough,
we can find a set of $\omega+1$ pairwise overlapping intervals in~$\mathcal{I}$.
The previous observation asserts that circle filaments witnessing those intervals are pairwise distinct.
Moreover, it is proved the same way that their domains are pairwise overlapping.
In particular, these circle filaments must be pairwise intersecting.

An old problem of Kostochka and Kratochvíl~\cite{KK97} asks whether the optimal $\chi$-bounding function for polygon-circle graphs is within a constant fraction of the optimal $\chi$-bounding function for circle graphs. Since circle graphs are now known to have an optimal $\chi$-bounding function of $\Theta (\omega \log \omega)$~\cite{davies2021circle}, two reasonable approaches to this problem would be to either improve the lower bound construction, or to extend the $O(\omega \log \omega)$ proof for circle graphs to polygon-circle graphs. However the latter does not appear to extend as easily as the proof of the $(1 + o(1))\omega^2$ bound.

The $O(\omega \log \omega)$ proof for circle graphs uses a notion of pillar assignment that directly obtains a proper coloring rather than an improper coloring which is later refined. The induced permutation graphs assigned to each pillar are colored as we go in a particularly well structured way so that certain configurations of intervals with corresponding vertices colored by the pillar assignment yield pairwise overlapping intervals elsewhere. This part is critical in allowing for the improved bound. However for polygon-circle graphs, the vertices assigned to each pillar would instead induce trapezoid graphs, a more complex class that contains permutation graphs. Although trapezoid graphs are again perfect, they appear to be more difficult to color in a well structured well that facilities the rest of the proof.

\subsection*{Grounded L-graphs}

Next we sketch some of the main differences in the strategy that we use for grounded L-graphs. The reader may wish to refer to Figures \ref{fig:pillar} and \ref{fig:pillarPhi} for an idea of the algorithm we use to draw pillars as well as the definition of pillar assignments for grounded L-graphs.

Now let $\cal L$ be a collection of grounded L-shapes whose intersection graph, which we denote by $G({\cal L})$, has clique number $\omega$. The basic strategy is the same, we shall use triple $(B,\prec, c)$ to color our grounded L-graph, where this time $B$ consists of points of the $x$-axis, which we will call ``pillar bases''. As before, we shall color vertices with the same color of the pillar base that their corresponding L-shape is assigned to.

What is not so obvious however is how one should decide which pillar base of $B$ a vertex corresponding to a given L-shape should be assigned to. Given the first pillar base $b$ in the totally ordered set $(B,\prec)$, we use an algorithm to define a curve $P_b$, called a ``pillar'', beginning at $b$ and going up and to the left. This algorithm is dependent on the collection of grounded L-shapes $\mathcal{L}$, and this curve ends up being a staircase which ends at a one-way infinite vertical segment (see Figure \ref{fig:pillar}). 
We then continue with the next pillar base $b'$ in the total order, defining its pillar $P_{b'}$ again with an algorithm that depends on $\mathcal{L}$.
Again the pillar $P_{b'}$ begins at $b'$ and is a staircase that goes up and to the left; we only stop $P_{b'}$ if it runs into an earlier pillar. This process is carried out for every pillar base of $B$ in order (see Figure \ref{fig:pillarPhi}).

The total order $\prec $ of $B$ also naturally orders the pillars $\mathcal{P} = \{P_b: b\in B\}$, and we can also consider $c$ as a coloring of $\mathcal{P}$. Then we assign each vertex of $G({\cal L})$ to the earliest pillar $P$ in the totally ordered set $(\mathcal{P}, \prec)$ that intersects its corresponding L-shape. Then for grounded L-graphs, we call such a triple $(\mathcal{P}, \prec , c)$ a pillar assignment.

This definition of pillars and pillar assignment may seem somewhat convoluted compared to that of circle graphs, but the algorithm defining the pillars is carefully designed so that certain configurations of L-shapes with corresponding vertices of certain colors given by the pillar assignment yield pairwise intersecting L-shapes elsewhere in $\mathcal{L}$. Crucially, this gives us another way to find cliques in the intersection graph rather than just finding them directly. This extra way of finding cliques is needed for one of our required extremal results.

For grounded L-graphs there are more ways that a grounded L-shape $L$ with corresponding vertex $v$ colored by our pillar assignment could intersect an L-shape $L'$ whose corresponding vertex $v'$ is uncolored. So instead of one notion for the degree of a segment we require a few that depend on the position of the L-shape contributing to the degree. This mainly consists of definitions for the ``left degree'' and the ``right degree'' of a segment. Then with extremal results that are tailored to our notion of pillar assignment and these individual notions of degree, the inductive argument on extending pillar assignments can be carried out in a way similar to that of circle graphs with another divide-and-conquer argument.

This pillar assignment approach could potentially be used to prove polynomial $\chi$-bounds for other classes of graphs, but it seems difficult to define a workable notion of a ``pillar''. We need to be able to color the induced subgraph corresponding to a ``pillar'', we need some kind of divide-and-conquer argument, and we need a suitable extremal lemma. All three of these steps are entirely dependent on the chosen notion of ``pillar''. So defining pillars seems like a major roadblock in using this approach to prove polynomial $\chi$-bounds.
Nevertheless we are hopeful that the approach can be further developed and applied to other classes of graphs,  particularly (but perhaps not necessarily exclusively) to other geometrically defined classes of graphs.

\subsection*{Flat L-collections}
We finish this section with some further definitions on grounded L-shapes, as well as a reduction that allows us to instead color simpler collections of L-shapes which we call ``flat L-collections''.

Let $L$ be an L-shape. We let $h(L)$ denote the $y$-coordinate of the topmost endpoint of the vertical segment of $L$, and we let $\ell(L)$ and $r(L)$ denote the $x$-coordinate of the leftmost and rightmost endpoints of the horizontal segment of $L$, respectively. So $L$ is totally determined by its \emph{height} $h(L)$, \emph{left endpoint} $\ell(L)$, and \emph{right endpoint} $r(L)$. We call the point $(\ell(L),h(L))$, where the vertical and horizontal segments meet, the \textit{corner} of $L$. Finally, we let $p(L)$ denote the closed interval $[\ell(L),r(L)]$; we call $p(L)$ the \emph{projection} of $L$ as it is the projection onto the $x$-axis.

We will consider collections $\cal L$ of grounded L-shapes such that for each distinct pair $L_1,L_2\in {\cal L}$, the intervals $p(L_1)$ and $p(L_2)$ do not share an endpoint and $h(L_1)\not= h(L_2)$. We call such a collection an \emph{L-collection} and write $G({\cal L})$ for its intersection graph. Notice that any grounded L-graph is isomorphic to some such $G({\cal L})$. We say that $\cal L$ is \emph{flat} if there are no $L_1, L_2 \in {\cal L}$ such that $p(L_1)\subseteq p(L_2)$ and $h(L_1) > h(L_2)$.  A \emph{flat grounded L-graph} is an intersection graph of a flat L-collection. We now give a simple reduction to flat L-collections.

\begin{figure}
    \centering
    \begin{tikzpicture}
        \def \h {.75} \def \w {6}
        \drawL{0}{\h}{\w}
        \drawL{1}{\h*2}{\w-2}
        \drawL{2}{\h*3}{\w-4}
        \draw (-0.5,0)--(7,0);
        \node[label=right:$L_3$] at (\w,\h) {};
        \node[label=right:$L_2$] at (\w-1,\h*2) {};
        \node[label=right:$L_1$] at (\w-2,\h*3) {};
    \end{tikzpicture}
    \caption{A chain of three L-shapes with $L_1 \preceq L_2 \preceq L_3$.}
    \label{fig:chain}
\end{figure}


\begin{lemma}\label{flattened}
    If $\cal L$ is an L-collection so that $G({\cal L})$ has clique number $\omega$, then there is an improper coloring of $\cal L$ with $\omega$ colors so that each color class is flat.
\end{lemma}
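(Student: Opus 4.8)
The plan is to define a partial order $\preceq$ on the L-collection $\mathcal{L}$ whose chains are exactly the ``bad configurations'' we want to break up, and then show that the comparability graph of $\preceq$ is a subgraph of $G(\mathcal{L})$. Concretely, for $L_1, L_2 \in \mathcal{L}$ I would declare $L_1 \preceq L_2$ if $p(L_1) \subseteq p(L_2)$ and $h(L_1) \ge h(L_2)$ (so the ``inner'' L-shape is taller). Since projections of distinct members of an L-collection never share an endpoint and heights are distinct, $L_1 \prec L_2$ means $p(L_1)$ is strictly inside $p(L_2)$ and $h(L_1) > h(L_2)$; I would quickly check reflexivity, antisymmetry, and transitivity (transitivity is immediate since both $\subseteq$ and $\ge$ are transitive). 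A color class is flat precisely when it is an antichain of $\preceq$, so it suffices to partition $\mathcal{L}$ into $\omega$ antichains.

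The key step is to observe that any chain $L_1 \succeq L_2 \succeq \cdots \succeq L_k$ in $\preceq$ induces a clique in $G(\mathcal{L})$. Indeed, if $i < j$ then $p(L_j) \subseteq p(L_i)$ and $h(L_j) > h(L_i) > 0$; pick any point $x$ in the interior of $p(L_j)$ (in particular $x \in p(L_i)$). The horizontal segment of $L_i$ lies at height $h(L_i)$ and spans $p(L_i)$, so it contains the point $(x, h(L_i))$. The vertical segment of $L_j$ is the segment from $(\ell(L_j), 0)$ up to $(\ell(L_j), h(L_j))$; taking $x = \ell(L_j)$ works as long as $\ell(L_j)$ is interior to $p(L_i)$, which it is since $p(L_j) \subseteq p(L_i)$ and the endpoints do not coincide. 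Then $(\ell(L_j), h(L_i))$ lies on the vertical segment of $L_j$ (because $0 \le h(L_i) \le h(L_j)$) and on the horizontal segment of $L_i$, so $L_i \cap L_j \ne \emptyset$. This is exactly the situation of Figure~\ref{fig:chain}. Hence every chain of $\preceq$ of size $k$ gives a $k$-clique in $G(\mathcal{L})$, so $\preceq$ has no chain of size $\omega + 1$.

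By Mirsky's theorem (the dual of Dilworth's theorem), a finite poset with no chain of size $\omega+1$ can be partitioned into $\omega$ antichains; concretely, color each $L \in \mathcal{L}$ by the length of the longest chain in $\preceq$ with top element $L$, which takes values in $\{1, \dots, \omega\}$, and each color class is an antichain. This partitions $\mathcal{L}$ into $\omega$ flat L-collections (as subsets of an L-collection, each class is automatically again an L-collection), giving the desired improper $\omega$-coloring. I do not expect any serious obstacle here; the only point requiring a little care is the geometric verification that a $\preceq$-chain is a clique — in particular checking that the intersection point can be chosen to lie on both the vertical segment of the inner shape and the horizontal segment of the outer shape, using $0 < h(L_i) < h(L_j)$ and the strictness of the projection containment — but this is routine once the order is set up correctly.
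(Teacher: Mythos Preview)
Your proof is correct and follows essentially the same approach as the paper: define the partial order by projection containment together with the height inequality, observe that chains are cliques in $G(\mathcal{L})$, and apply Mirsky's theorem (which the paper calls the dual Dilworth Theorem) to decompose into $\omega$ antichains. You simply give more geometric detail in verifying that a chain forms a clique, where the paper only says this is straightforward.
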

\begin{proof}
	We construct a partial order $({\cal L, \preceq})$. For two L-shapes $L_1$ and $L_2$, we have $L_1\preceq L_2$ if and only if $p(L_1)\subseteq p(L_2)$ and $h(L_1)> h(L_2)$, as in Figure~\ref{fig:chain}. It is straightforward to verify that $({\cal L, \preceq})$ is a partial order. Now notice that a chain in $({\cal L, \preceq})$ yields a clique in $G({\cal L})$; hence the maximum size of a chain is at most $\omega$. By the dual Dilworth Theorem, there is partition of $\cal L$ into at most $\omega$ antichains, and this gives the desired improper coloring.
\end{proof}

We remark that there are flat L-collections whose intersection graphs are not interval filament graphs. 
The example of a grounded L-graph that is not an interval filament graph given in the proof Theorem~\ref{thm:separation} is actually a flat grounded L-graph (see Figure~\ref{fig:flat_grounded_L_graph_not_inteval_filament}).
We mention this since, due to other work by the authors~\cite{DaviesMcCarty,KW17}, interval filament graphs are polynomially $\chi$-bounded.

\section{Pillar assignments}
\label{sec:pillarA}
In this section we will define a ``pillar assignment'', which encodes the points placed on the $x$-axis, the order in which they are placed, and their ``pillars''. We will also take care of condition 1) as outlined in the last section.

So let $\cal L$ be a flat L-collection. A \emph{base} $b$ of $\cal L$ is a point in $\mathbb{R}$ which is not an endpoint of $p(L)$ for any $L \in {\cal L}$. This is mainly to avoid any ambiguity. Given a set of bases $B$ and a total ordering $\prec$ of $B$ we will generate a collection of (simple) curves ${\cal P}=(P_b : b\in B)$. Each curve $P_b$ will be a ``staircase'' which starts at the point $(b,0)$ on the $x$-axis and goes up and to the left; refer to Figures~\ref{fig:pillar} and~\ref{fig:pillarPhi} for the following definitions.

Inductively suppose that $b\in B$ is the smallest base according to $\prec$ for which $P_b$ is not yet defined. Now starting at the point $(b,0)$ on the $x$-axis we draw the curve $P_b$ as follows.
\begin{itemize}\itemsep0em
	\item[1)] If the end of the curve we are drawing is contained in some previous curve $P_{b'}$ with $b'\prec b$, then we immediately stop drawing, thus completing $P_b$.
	
	\item[2)] If the end of the curve we are drawing is contained in the horizontal segment of an L-shape $L\in {\cal L}$ such that $b\in p(L)$, and the end of the curve is not the corner of $L$, then we continue going horizontally to the left until either 1) occurs or we reach the corner of $L$.
	
	\item[3)] Otherwise, we continue drawing the curve vertically upwards until either 1) or 2) occurs (or, if neither occurs again, then we complete $P_b$ with a one-way infinite segment).
\end{itemize}

\begin{figure}
    \centering
    \begin{tikzpicture}[xscale=0.85,>=latex,shorten >=-0.2pt]
        \def \h {.75}
        \drawLUltra{-3}{\h}{5}
        \drawL{-1}{\h*2}{5}
        \drawL{-7}{\h*2.5}{5}
        \drawL{1}{\h*3}{4}
        \drawL{-8}{\h*3.5}{2}
        \drawLUltra{-5}{\h*4}{7.5}
        \node[label=below:$b$] (b0) at (0,0) {};
        \draw[ultra thick, red, dashed, ->] (b0.center) --++ (0,\h) --++ (-3,0) --++ (0,3*\h) --++ (-2, 0) --++ (0,\h);
        \draw (-8.5,0)--(5.5,0);
    \end{tikzpicture}
    \caption{A pillar $P_b$ with base $b$, where $P_b$ is in red and the supports of $P_b$ is bolded.}
    \label{fig:pillar}
\end{figure}
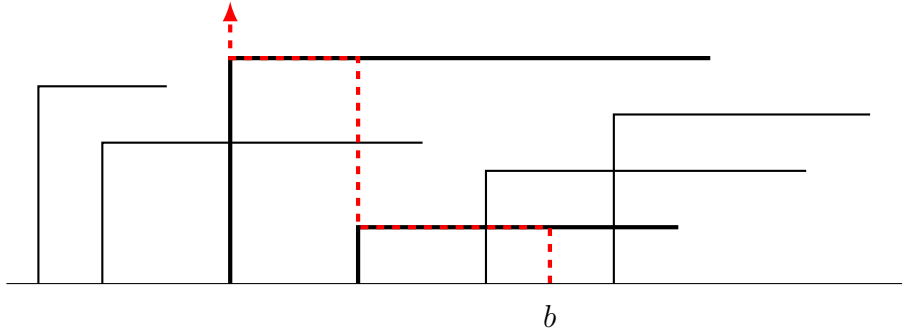

We call the curves in $\cal P$ constructed in this way \emph{pillars}. For each $b\in B$, we call $b$ the \emph{base of $P_b$}. We give $\cal P$ the same total ordering as $B$; thus we use $({\cal P}, \prec)$ and $(B, \prec)$ interchangeably. We call such a  tuple $({\cal P}, \prec)$ \emph{ordered pillars} (and $(B, \prec)$ \emph{ordered bases}). We usually will not name the ordered bases; instead we will just talk about the ordered pillars $({\cal P}, \prec)$ and their bases.

Notice that a pillar $P$ is formed from horizontal and vertical segments (from steps 2) and 3) respectively). We say that an L-shape $L \in {\cal L}$ is a \emph{support} of a pillar $P$ if $L$ intersects $P$ in more than one point (equivalently, if $L$ caused $P$ to have a horizontal segment in some step 2)).

Next we have a key observation.

\begin{lemma}\label{pillar 2 permutations}
	Let $\cal L$ be a flat L-collection with ordered pillars $({\cal P}, \prec)$, and let $\omega$ be the clique number of $G({\cal L})$. Then for any pillar $P$, the subgraph of $G({\cal L})$ induced on the set of L-shapes which intersect $P$ is $2\omega$-colorable.
\end{lemma}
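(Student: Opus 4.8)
The idea is to show that the L-shapes intersecting a fixed pillar $P$ split naturally into two families, each of which induces a permutation graph (hence a perfect graph colorable with $\omega$ colors), and then combine the two colorings. The natural split is by how an L-shape meets $P$: let $\mathcal{A}$ be the set of L-shapes whose \emph{horizontal} segment meets $P$, and $\mathcal{B}$ the set of L-shapes whose \emph{vertical} segment meets $P$ but whose horizontal segment does not. (An L-shape meeting $P$ must do so with its horizontal or vertical part; supports land in $\mathcal{A}$, and in fact any L-shape touching a horizontal segment of $P$ goes into $\mathcal{A}$.) I would color $G(\mathcal{A})$ with $\omega$ colors and $G(\mathcal{B})$ with $\omega$ colors using disjoint palettes, giving the claimed $2\omega$ bound; since $G(\mathcal{L})$ restricted to $\mathcal{A}\cup\mathcal{B}$ is a subgraph of the disjoint-union-type coloring refinement, this is a proper coloring.

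\textbf{Why each part is a permutation graph.} Recall from the Preliminaries that permutation graphs are exactly the intersection graphs of grounded L-shapes pierced exactly once by a common vertical line, and equivalently are the graphs realized by two linear orders with adjacency given by inversions. For $\mathcal{B}$: each $L\in\mathcal{B}$ meets $P$ only along a vertical segment of $P$, at some height; parametrize $L\in\mathcal{B}$ by (i) the height at which it crosses $P$ and (ii) the left-to-right position, and argue that two such L-shapes intersect iff these two orders disagree — intuitively, since $P$ is a monotone staircase going up-and-left, an L-shape crossing $P$ lower down crosses it further to the right, so the horizontal arms of the L-shapes to the left of $P$ are nested/crossing exactly like a permutation diagram. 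For $\mathcal{A}$: each $L\in\mathcal{A}$ has its horizontal segment crossing (or running along) $P$; here the relevant data is the height $h(L)$ together with the $x$-coordinate where the horizontal arm meets $P$, and flatness plus the staircase structure of $P$ should again force the intersection pattern to be that of a permutation graph. In both cases the cleanest route is to exhibit an explicit vertical line (or a slight perturbation of the staircase $P$ into a genuine line after a monotone change of coordinates) that each L-shape in the family crosses exactly once, so the Preliminaries characterization applies directly; alternatively, extract the two linear orders and check the inversion criterion by a short case analysis on the relative positions of corners.

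\textbf{Main obstacle.} The delicate point is handling the L-shapes that are \emph{supports} of $P$ and, more generally, making the case analysis for $\mathcal{A}$ airtight: an L-shape whose horizontal segment runs along a horizontal segment of $P$ can also interact with L-shapes crossing nearby vertical segments of $P$, and the staircase $P$ has bends, so ``crossed exactly once by a vertical line'' is not literally true for $P$ itself — one must use the fact that $P$ is $x$-monotone-ish (each vertical line meets $P$ in at most one point away from the bends, and the staircase only goes left) to straighten it. I would also need to be careful that the partition $\mathcal{A}\cup\mathcal{B}$ really covers every L-shape meeting $P$ and that the boundary case (an L-shape meeting $P$ at a single point that is a bend of $P$) is assigned consistently; the no-shared-endpoint and distinct-height conventions of an L-collection, together with the freedom to choose bases avoiding endpoints, should eliminate the genuinely ambiguous configurations. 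Once the two permutation (hence perfect) graphs are in hand, $\chi(G(\mathcal{A}))\le\omega$ and $\chi(G(\mathcal{B}))\le\omega$ follow from perfection, and disjoint palettes finish the proof.
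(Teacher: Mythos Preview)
Your high-level strategy---split into two families, show each induces a permutation graph, color each with $\omega$ colors on disjoint palettes---matches the paper exactly, and your partition by which arm of $L$ meets $P$ is essentially the paper's partition by which part of $P$ is met (the paper takes $\mathcal{L}_1$ to be the L-shapes meeting a horizontal segment of $P$ and $\mathcal{L}_2$ the rest; the two splits differ only in where the supports of $P$ land, and either choice works).

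The gap is in the permutation orders you propose. In both families your two parameters collapse to a single one. For $\mathcal{B}$, the height at which $L$'s vertical arm crosses $P$ is a monotone function of $\ell(L)$, because $P$ is a monotone staircase: smaller $\ell(L)$ forces a higher crossing. So ``crossing height'' and ``left-to-right position'' give the same order, not two. (You also have the geometry reversed: an L-shape in $\mathcal{B}$ has its vertical arm crossing a \emph{horizontal} segment of $P$, not a vertical one.) For $\mathcal{A}$ the same happens: the $x$-coordinate where the horizontal arm meets $P$ is determined by $h(L)$, again by monotonicity of $P$, so ``$h(L)$'' and ``crossing $x$-coordinate'' coincide as orders. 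Your ``straighten $P$ into a line'' alternative does not rescue this, since a monotone change of coordinates that turns the staircase into a vertical line will not keep L-shapes as L-shapes. The idea you are missing is that every $L$ in the relevant family has, or can be modified to have, the pillar's base $b$ in its projection $p(L)$. For $\mathcal{L}_1$ the paper proves $b\in p(L)$ directly from flatness: if $L$ meets a horizontal segment of $P$ and is not itself a support, then some support $L'$ has $\ell(L')<\ell(L)$ and $h(L')<h(L)$, and flatness forces $r(L)>r(L')>b$. For $\mathcal{L}_2$ the paper observes that the horizontal arms can be extended rightward to $b$ without changing $G(\mathcal{L}_2)$. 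Once every $L$ satisfies $b\in p(L)$, the two correct orders are simply $\ell(L)$ and $h(L)$: with a common vertical line through all projections, two L-shapes intersect iff the taller one has the larger left endpoint, and the permutation-graph claim is immediate.
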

\begin{proof}
    Let ${\cal L}_1$ be the set of L-shapes that intersect a horizontal segment of $P$, and let ${\cal L}_2$ be the set of all other L-shapes that intersect $P$. It is enough to verify that for $i\in \{1,2\}$, $G({\cal L}_i)$ is a permutation graph.
    
    First we claim that, where $b$ is the base of $P$, each $L \in {\cal L}_1$ satisfies $b \in p(L)$. This is true if $L$ is a support of $P$, so we may assume $L$ is not a support. Then, as $L$ intersects a horizontal segment of $P$, there is a support $L'$ of $P$ so that $\ell(L') < \ell(L)$ and $h(L') < h(L)$. As $\cal L$ is flat, $r(L)>r(L')>b$, and so $b\in p(L)$ as required.
    
    Now we give ${\cal L}_1$ two total orderings $\prec^1$ and $\prec^2$. Let $\prec^1$ be the total ordering such that $L \prec^1 L'$ whenever $\ell(L) < \ell(L')$, and let $\prec^2$ be the total ordering such that $L \prec^2 L'$ whenever $h(L)> h(L')$. Thus $\prec^1$ orders ${\cal L}_1$ from left to right by where the L-shapes intersect the $x$-axis and $\prec^2$ orders ${\cal L}_1$ from top to bottom by where the L-shapes intersect the vertical line with $x$-coordinate $b$. It follows that two L-shapes in ${\cal L}_1$ intersect if and only if their relative orders are different in $\prec^1$ and $\prec^2$. So $G({\cal L}_1)$ is a permutation graph.
	
	The same idea works to show that $G({\cal L}_2)$ is a permutation graph; note that the horizontal segments of the L-shapes $L \in {\cal L}_2$ can be extended to the right until $b \in p(L)$ without changing the graph $G({\cal L}_2)$.
\end{proof}

We aim to use ordered pillars to color our intersection graph. Let $\cal L$ be a flat L-collection with ordered pillars $({\cal P}, \prec)$. We say that an L-shape $L\in {\cal L}$ is \emph{assigned} to a pillar $P\in {\cal P}$ if $L$ intersects $P$ and all other pillars in $\cal P$ that intersect $L$ occur later than $P$ in the total ordering $({\cal P}, \prec)$. Notice that each L-shape is assigned to at most one pillar of $\cal P$. Let $c: {\cal P} \to \mathbb{Z}^+$ be a coloring of $\cal P$ and let $\phi_{({\cal P}, \prec, c)}$ be the partial, improper coloring of $\cal L$ where an L-shape that is assigned to a pillar $P$ is given the color $c(P)$.

A \emph{pillar assignment} of a flat L-collection $\cal L$ is a tuple $({\cal P}, \prec, c)$ so that $({\cal P}, \prec)$ are ordered pillars and $c : {\cal P} \to \mathbb{Z}^+$ is a coloring such that if $L_1$ and $L_2$ are intersecting L-shapes assigned to distinct pillars $P_1$ and $P_2$ respectively, then $c(P_1)\not= c(P_2)$. A pillar assignment $({\cal P}, \prec, c)$ is \emph{complete} if every L-shape in $\cal L$ intersects (and so is assigned to) some pillar. So Figure~\ref{fig:pillarPhi} depicts a pillar assignment and the corresponding improper coloring of the L-shapes.

We have the following key lemma.

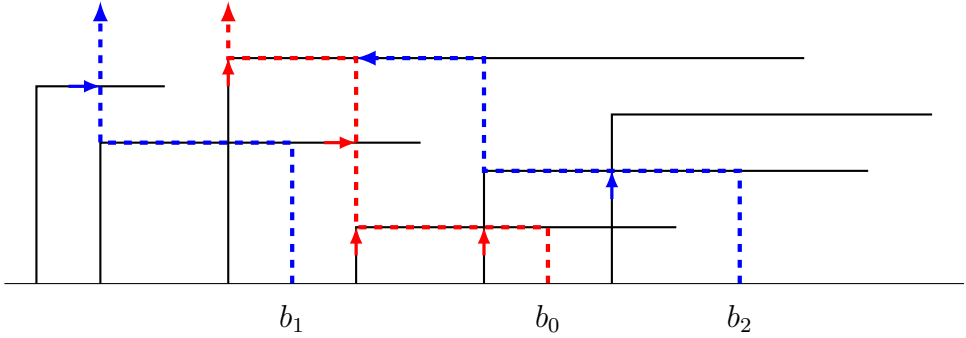
\begin{figure}
    \centering
    \begin{tikzpicture}[xscale=0.85,>=latex,shorten >=-0.2pt]
        \def \h {.75}
        \drawLColoured{-3}{\h}{5}{black}
        \drawLColoured{-1}{\h*2}{6}{black}
        \drawLColoured{-7}{\h*2.5}{5}{black}
        \drawLColoured{1}{\h*3}{5}{black}
        \drawLColoured{-8}{\h*3.5}{2}{black}
        \drawLColoured{-5}{\h*4}{9}{black}
        
        \draw[red, very thick, ->] (-5, \h*3.5) -- (-5, \h*4);
        \draw[blue, very thick, ->] (-7.5, \h*3.5) -- (-7, \h*3.5);
        \draw[red, very thick, ->] (-3.5, \h*2.5) -- (-3, \h*2.5);
        \draw[red, very thick, ->] (-3, \h*0.5) -- (-3, \h*1);
        \draw[red, very thick, ->] (-1, \h*0.5) -- (-1, \h*1);
        \draw[blue, very thick, ->] (1, \h*1.5) -- (1, \h*2);
        \node[label=below:$b_0$] (b0) at (0,0) {};
        \node[label=below:$b_1$] (b1) at (-4,0) {};
        \node[label=below:$b_2$] (b2) at (3,0) {};
        \draw[ultra thick, red, dashed, ->] (b0.center) --++ (0,\h) --++ (-3,0) --++ (0,3*\h) --++ (-2, 0) --++ (0,\h);
        \draw[ultra thick, blue, dashed, ->] (b1.center) --++ (0,\h*2.5) --++ (-3,0) --++ (0,\h*2.5);
        \draw[ultra thick, blue, dashed,->] (b2.center) --++ (0,\h*2) --++ (-4,0) --++ (0,\h*2) --++ (-2,0);
        \draw (-8.5,0)--(6.5,0);
    \end{tikzpicture}
    \caption{A pillar assignment and corresponding coloring $b_0, b_1, b_2$ with $b_0 \prec b_1 \prec b_2$.
    An arrow on every $L$-shape indicates its intersection with the smallest pillar.}
    \label{fig:pillarPhi}
\end{figure}

\begin{lemma}\label{colouring L with pillars}
	Let $\cal L$ be a flat L-collection such that $G({\cal L})$ has clique number at most $\omega$, and let $({\cal P}, \prec, c)$ be a complete pillar assignment of $\cal L$ using at most $k$ colors. Then the graph $G({\cal L})$ is $(2\omega k)$-colorable.
\end{lemma}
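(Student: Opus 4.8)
The plan is to combine the two colorings we already have: the color $c(P)$ that a pillar assignment gives to each L-shape assigned to $P$, and the per-pillar $2\omega$-coloring guaranteed by Lemma~\ref{pillar 2 permutations}. Since $({\cal P},\prec,c)$ is complete, every L-shape of $\cal L$ is assigned to exactly one pillar, so $\phi_{({\cal P},\prec,c)}$ is a total (but possibly improper) coloring of $\cal L$ using at most $k$ colors. For each pillar $P$, let ${\cal L}_P$ be the set of L-shapes assigned to $P$; these are pairwise disjoint sets whose union is $\cal L$. Every L-shape in ${\cal L}_P$ intersects $P$, so ${\cal L}_P$ is a subset of the vertex set of the graph from Lemma~\ref{pillar 2 permutations}, and hence $G({\cal L}_P)$ is $2\omega$-colorable; fix such a coloring $d_P\colon {\cal L}_P\to\{1,\dots,2\omega\}$.

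Now define the product coloring $\psi\colon {\cal L}\to \{1,\dots,k\}\times\{1,\dots,2\omega\}$ by $\psi(L) = \bigl(\phi_{({\cal P},\prec,c)}(L),\, d_{P}(L)\bigr)$, where $P$ is the pillar to which $L$ is assigned. This uses at most $2\omega k$ colors. It remains to check that $\psi$ is a proper coloring of $G({\cal L})$. Let $L_1,L_2\in{\cal L}$ be intersecting L-shapes, assigned to pillars $P_1,P_2$ respectively. If $P_1\ne P_2$, then by the defining property of a pillar assignment $c(P_1)\ne c(P_2)$, so the first coordinates of $\psi(L_1)$ and $\psi(L_2)$ differ. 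If $P_1 = P_2 =: P$, then $L_1,L_2\in{\cal L}_P$ and they are adjacent in $G({\cal L}_P)$, so $d_P(L_1)\ne d_P(L_2)$, hence the second coordinates differ. In either case $\psi(L_1)\ne\psi(L_2)$, so $\psi$ is proper and $G({\cal L})$ is $(2\omega k)$-colorable.

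There is no real obstacle here; the lemma is a bookkeeping step that packages Lemmas~\ref{pillar 2 permutations} and the definition of pillar assignment. The only points that need a word of care are that completeness is exactly what guarantees $\phi_{({\cal P},\prec,c)}$ colors every vertex (so the product coloring is defined everywhere), and that the color sets used by the different $d_P$ may safely be identified across pillars because the first coordinate already separates L-shapes assigned to distinct pillars. If one prefers to avoid product colors, one can equivalently pick, for each color $i\in\{1,\dots,k\}$, the union ${\cal L}^{(i)}$ of the sets ${\cal L}_P$ over pillars $P$ with $c(P)=i$: within ${\cal L}^{(i)}$ any edge lies inside a single ${\cal L}_P$ (edges between distinct such pillars are forbidden by the pillar-assignment condition since they would have the same $c$-value), so $G({\cal L}^{(i)})$ is a disjoint union of graphs each $2\omega$-colorable, hence $2\omega$-colorable, and using disjoint palettes of size $2\omega$ for the $k$ values of $i$ gives a proper $(2\omega k)$-coloring of $G({\cal L})$.
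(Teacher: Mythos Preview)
Your proof is correct and follows essentially the same approach as the paper: partition $\cal L$ according to the pillar color, observe that within each color class any edge joins L-shapes assigned to the same pillar (by the pillar-assignment condition), and then apply Lemma~\ref{pillar 2 permutations} to $2\omega$-color each such piece. Your product-coloring presentation and the alternative you sketch at the end are both just repackagings of this same argument; the paper uses the latter phrasing.
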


\begin{proof}
	Let $i$ be a color used by $\phi$ and let ${\cal L}_i=\{L\in {\cal L} : \phi_{({\cal P}, \prec, c)}(L)=i\}$. It will be enough to show that $G({\cal L}_i)$ is $2\omega$-colorable.
	
	By the definition of a pillar assignment, if two L-shapes in ${\cal L}$ intersect and are assigned to different pillars, then those pillars are colored differently. Hence the L-shapes of each connected component of $G({\cal L}_i)$ all intersect a single pillar. So by Lemma~\ref{pillar 2 permutations}, each connected component of ${G({\cal L}_i)}$ is $2\omega$-colorable and thus ${G({\cal L}_i)}$ itself is $2\omega$-colorable.
\end{proof}

At this point we have reduced the coloring problem to the problem of finding a complete pillar assignment of a flat L-collection. To be more precise, by Lemmas~\ref{flattened} and~\ref{colouring L with pillars}, if each flat L-collection whose intersection graph has clique number $\omega$ has a complete pillar assignment using at most $k$ colors, then each grounded L-graph with clique number at most $\omega$ is $(2\omega^2 k)$-colorable. 

Now we prove another important feature. The next lemma implies that if an L-shape $L$ is uncolored, then we can color it by placing a base anywhere in $p(L)$. Note that we write $p(L) \times [0,h(L)]$ for the square ``underneath'' $L$.

\begin{lemma}
    \label{lem:assigned}
    If $\cal L$ is a flat L-collection with ordered pillars $({\cal P}, \prec)$ and $L \in {\cal L}$ is such that $p(L) \times [0,h(L)]$ contains a point of a pillar $P$, then $L$ is assigned to a pillar which is at most $P$ according to $\prec$.
\end{lemma}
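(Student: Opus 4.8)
The plan is to induct on the position of $P$ in the order $\prec$. Throughout, write $R_L=p(L)\times[0,h(L)]$ for the rectangle in the statement, and recall that $L$ is exactly the union of the left edge $\{\ell(L)\}\times[0,h(L)]$ and the top edge $[\ell(L),r(L)]\times\{h(L)\}$ of $R_L$, so $L\subseteq R_L$. If $P$ itself intersects $L$, then $L$ intersects a pillar and hence is assigned to the $\prec$-smallest pillar meeting it, which is $\preceq P$, and we are done. So assume $P\cap L=\emptyset$; the goal is then to exhibit a pillar $P'\prec P$ that also contains a point of $R_L$, since applying the inductive hypothesis to $P'$ finishes the proof (in particular, when $P$ is $\prec$-minimal this case cannot arise, which forces $P\cap L\ne\emptyset$).

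The mechanism is that each pillar is a staircase drawn from its base using only upward and leftward moves; hence, as we traverse $P$ from its base $(b,0)$ outward, the $x$-coordinate is non-increasing and the $y$-coordinate is non-decreasing. Since $P$ has a point in $R_L$, that point has $x$-coordinate at least $\ell(L)$, so $b\ge\ell(L)$, and as bases are never endpoints of projections we even have $b>\ell(L)$ and $b\ne r(L)$. First I would pin down a point $f\in P\cap R_L$ past which $P$ is confined to the half-strip $\{x<r(L)\}\cap\{y>0\}$. If $b<r(L)$, take $f=(b,0)$: then $P$ leaves the $x$-axis immediately and its $x$-coordinate is at most $b<r(L)$ thereafter. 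If $b>r(L)$, take $f$ to be the first point of $P$ whose $x$-coordinate equals $r(L)$; using monotonicity of the coordinates one checks that $f$ lies at height at most that of the given point of $P\cap R_L$, hence $f\in R_L$, and that past $f$ the $x$-coordinate is strictly less than $r(L)$ while the $y$-coordinate is positive.

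Now follow $P$ from $f$ onward. If $P$ never leaves $R_L$ after $f$, then this portion of $P$ is bounded, so $P$ does not end with a one-way infinite vertical segment; it therefore terminates by running into an earlier pillar $P'$, and the terminal point of $P$---lying both on $P'$ and in $R_L$---witnesses that $P'\prec P$ meets $R_L$, as required. Otherwise, let $\sigma$ be the last parameter for which $P$, traversed from $f$, still lies in $R_L$; then $P(\sigma)$ lies on $\partial R_L$ and $P$ leaves $R_L$ immediately after it. Since past $f$ we have $y>0$ and $x<r(L)$ along $P$, the point $P(\sigma)$ lies neither on the bottom edge nor on the right edge of $R_L$, so it lies on the left edge or the top edge---both of which are contained in $L$. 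This contradicts $P\cap L=\emptyset$, so this case does not occur and the proof is complete.

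The crux is the middle paragraph: choosing $f$ so that from $f$ on the pillar is trapped to the left of the line $x=r(L)$ and above the $x$-axis, which is exactly what forces any escape from $R_L$ to be through $L$. The remainder is bookkeeping once one invokes the monotonicity of the staircase and the fact that a pillar terminates in only one of two ways. Some care with boundary positions is needed---a pillar bending exactly on the line $x=r(L)$, or a relevant point landing on a corner of $R_L$---but these are absorbed by noting that a pillar meets the $x$-axis only at its base and that no base is an endpoint of a projection.
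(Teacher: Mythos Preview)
Your proof is correct and follows essentially the same idea as the paper's: the paper simply observes that if $P$ is the $\prec$-minimal pillar with a point in $R_L$, then $L$ must be assigned to $P$, which is precisely what your inductive argument establishes in detail. Your version is considerably more explicit than the paper's one-line proof, but the underlying reasoning---that a pillar entering $R_L$ can exit only through $L$ or by terminating on an earlier pillar---is the same.
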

\begin{proof}
    It is enough to observe that if $P$ is the smallest pillar according to $\prec$ so that $p(L) \times [0,h(L)]$ contains a point in $P$, then $L$ must be assigned to $P$.
\end{proof}

We will apply the following divide-and-conquer lemma to a set of bases so that we may order and color them.
\begin{lemma}
    \label{divide-and-conquer}
    For any positive integer $k$ and set $B \subset \mathbb{R}$ of size at most $2^k-1$, there exist a total ordering $\prec$ and a $k$-coloring $c$ of $B$ so that for any $b_1, b_2 \in B$ with $c(b_1) = c(b_2)$ and $b_1<b_2$, there exists $b \in (b_1, b_2)\cap B$ so that $b\prec b_1$ and $b\prec b_2$.
\end{lemma}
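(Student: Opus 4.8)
The plan is to proceed by induction on $k$. The base case $k=1$ is trivial: a set $B$ with $|B| \le 2^1 - 1 = 1$ has at most one point, so there is nothing to check — the vacuous condition holds for the unique total ordering and the unique $1$-coloring. For the inductive step, suppose the statement holds for $k$, and let $B \subset \mathbb{R}$ have size at most $2^{k+1}-1$. The key idea is to pick out a \emph{median-type} element $m \in B$ so that the sets $B^- := \{b \in B : b < m\}$ and $B^+ := \{b \in B : b > m\}$ each have size at most $2^k - 1$; this is possible since $|B \setminus \{m\}| \le 2^{k+1}-2 = 2(2^k-1)$, so we can choose $m$ to be, say, the $\lceil |B|/2 \rceil$-th smallest element, making both sides have at most $2^k-1$ elements. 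Apply the inductive hypothesis separately to $B^-$ and to $B^+$, obtaining total orderings $\prec^-$, $\prec^+$ and $k$-colorings $c^-$, $c^+$, each using colors in $\{1,\dots,k\}$.

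Now I would build $\prec$ and $c$ on $B$ as follows. Declare $m$ to be the $\prec$-smallest element of $B$; below $m$, concatenate the two orderings — say, put all of $B^-$ (in the order $\prec^-$) before all of $B^+$ (in the order $\prec^+$), so that $m \prec b$ for every $b \ne m$, and $\prec$ restricted to $B^-$ is $\prec^-$ and restricted to $B^+$ is $\prec^+$. For the coloring, set $c(m) := k+1$ (a fresh color), $c(b) := c^-(b)$ for $b \in B^-$, and $c(b) := c^+(b)$ for $b \in B^+$. This uses at most $k+1$ colors, as required.

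It remains to verify the separation condition. Take $b_1, b_2 \in B$ with $c(b_1) = c(b_2)$ and $b_1 < b_2$. Since $m$ is the unique element of color $k+1$, neither $b_1$ nor $b_2$ equals $m$, so both lie in $B^- \cup B^+$. If both lie in $B^-$: then $c^-(b_1) = c^-(b_2)$, and the inductive hypothesis gives $b \in (b_1,b_2) \cap B^- \subseteq (b_1,b_2)\cap B$ with $b \prec^- b_1$ and $b \prec^- b_2$, hence $b \prec b_1$ and $b \prec b_2$ since $\prec$ agrees with $\prec^-$ on $B^-$. The case where both lie in $B^+$ is identical. If $b_1 \in B^-$ and $b_2 \in B^+$: then $b_1 < m < b_2$, so $m \in (b_1,b_2)\cap B$, and $m \prec b_1$, $m \prec b_2$ since $m$ is $\prec$-smallest; take $b = m$. (The case $b_1 \in B^+$, $b_2 \in B^-$ cannot occur since $b_1 < b_2$.) This completes the induction.

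I do not expect a serious obstacle here; the only point requiring slight care is the bookkeeping on sizes — ensuring the two halves $B^-, B^+$ each fit within the $2^k - 1$ budget — and making sure the freshly-colored median is placed \emph{first} in $\prec$ so that it can serve as the required separating element for any pair straddling it. Everything else is a routine induction.
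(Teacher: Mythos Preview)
Your proof is correct and follows essentially the same approach as the paper: pick a median-type element, give it a fresh color and make it $\prec$-least, and recurse on the two halves. The only difference is that you spell out the verification of the separation condition in full, whereas the paper leaves it implicit.
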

\begin{proof}
    It is trivial to find such $\prec$ and $c$ for $k=1$. We proceed by induction on $k$. Choose $b\in B$ such that $|(-\infty,b)\cap B|,|(b,\infty )\cap B|\le 2^{k-1}-1$ and let $B_-= (-\infty,b)\cap B$ and $B_+=(b,\infty )\cap B$. Let $\prec_-$, $\prec_+$ and $c_-, c_+$ be such total orderings and $(k-1)$-colorings for $B_-$ and $B_+$ respectively (using the same set of colors, say $\{1,\dots , k-1 \})$. Now let $\prec$ be the total ordering obtained from $\prec_-\cup \prec_+$ by having $b$ precede all other elements (and each element in $B^-$ precede each element in $B^+$). Let $c$ be the $k$-coloring obtained from $c_-\cup c_+$ by setting $c(b)=k$. Now $\prec$ and $c$ provide the desired total ordering and coloring.
\end{proof}

\section{Extremal lemmas and degree}
\label{sec:extremal}
Let $\cal L$ be a flat L-collection with ordered pillars $({\cal P},\prec )$ with bases $B\subset \mathbb{R}$. A \emph{segment} of $B$ is an open interval with ends in $B\cup \{-\infty , \infty\}$ that contains no point in $B$. So there is a unique partition of $\mathbb{R}\backslash B$ into $|{\cal P}|+1$ segments.

Recall from Section~\ref{sec:prelim} that we wish to prove an extremal lemma regarding the ``degrees'' of disjoint intervals which are contained within a single segment. 
The ``degree'' of a segment $S$ is the number of pillars to which there is an L-shape $L$ assigned which satisfies either condition a) or condition b),
where:
\begin{itemize}\itemsep0em
    \item[a)] $L$ intersects the $x$-axis to the left of $b^-$ and intersects an uncolored L-shape grounded in $S$,
    \item[b)] $L$ intersects the $x$-axis at a point between $b^-$ and $b^+$.
\end{itemize}
So it is helpful to break up the ``degree'' into two parts: the ``left-degree'' corresponding to condition a) and the ``right-degree'' corresponding to condition b). 
Unfortunately, we also need a third type of degree, called the ``additional-degree'', for technical reasons. 
The issue is that when we break $S$ up into parts, the left-degree of a part can jump up suddenly. 
So we use the additional-degree to provide a buffer for the left-degree. 
For the following definitions we refer the reader to Figure~\ref{fig:pillardeg} for an example that illustrates the notion of the left and right degree.

For a segment $S=(b^-, b^+)$ and open interval $J\subseteq S$, the \emph{left-degree} of $J$ is the number of pillars $P$ with base $b$ such that $b \le b^-$, and there exist intersecting L-shapes $L,L'\in {\cal L}$ such that $L$ is assigned to pillar $P$ and $p(L')\subseteq J$. We denote the left-degree of $J$ by $d_{({\cal P},\prec)}^\ell(J)$. The \emph{left $({\cal P},\prec)$-neighborhood} $N_{({\cal P},\prec)}^\ell(J)$ is the set of all such pillars $P$. Throughout the paper (beginning now) we omit subscripts $({\cal P},\prec)$ when the ordered pillars are clear from context. 

In a similar manner, the \emph{additional-degree} of $J$ is the number of pillars $P$ with base $b$ such that $b \le b^-$, and there exist intersecting L-shapes $L,L'\in {\cal L}$ such that $L$ is assigned to pillar $P$, and $\ell(L')\in J$, and $r(L')\in S$, and $p(L')\setminus J$ does not contain the left endpoint of any L-shape in $\cal L$. We denote the additional-degree by $d^{a}(J)$, and the \emph{additional-neighborhood} $N^a(J)$ is the set of all such pillars $P$. Notice that $N^\ell (J) \subseteq N^a (J)$. It is the elements of $N^a(J) \backslash N^\ell (J)$ that will act as the aforementioned buffer for the left-degree.

The \emph{right-degree} of $J$ is the number of pillars $P$ with base $b$ such that $b \ge b^+$, and there exists an L-shape $L\in {\cal L}$ such that $L$ is assigned to pillar $P$ and $\ell(L)\in J$. Similarly we denote the right-degree of $J$ by $d^r(J)$, and the \emph{right-neighborhood} $N^r(J)$ is the set of all such pillars $P$. Finally, the \textit{degree} of $J$, denoted $d(J)$, is equal to the sum of the left- and right-degrees of $J$.

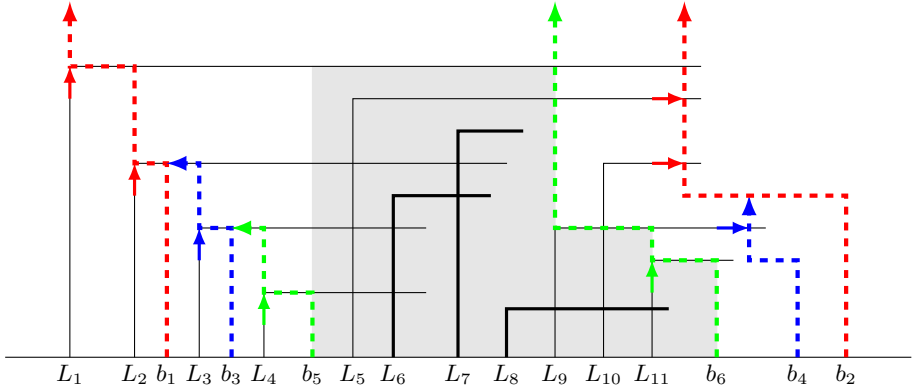
\begin{figure}
    \centering
    \begin{tikzpicture}[scale=0.43,>=latex,shorten >=-0.2pt]
        \draw[color=gray!20,fill=gray!20] (-4.5,0)--(-4.5,9)--(3,9)--(3,4)--(6,4)--(6,3)--(8,3)--(8,0)--cycle;
        \draw (-14,0)--(14,0);
        
        \draw (-12,0)--(-12,9)--(7.5,9);
        \draw[red, very thick, ->] (-12,8)--(-12,9);
        
        \draw (-10,0)--(-10,6)--(1.5,6);
        \draw[red, very thick, ->] (-10,5)--(-10,6);
        
        \draw (-8,0)--(-8,4)--(-1,4);
        \draw[blue, very thick, ->] (-8,3)--(-8,4);

        \draw (-6,0)--(-6,2)--(-1,2);
        \draw[green, very thick, ->] (-6,1)--(-6,2);
        
        \draw (-3.25,0)--(-3.25,8)--(7.5,8);
        \draw[red, very thick, ->] (6,8) -- (7,8);

        \draw[very thick] (-2,0)--(-2,5)--(1,5);
        \draw[very thick] (0,0)--(0,7)--(2,7);
        \draw[very thick] (1.5,0)--(1.5,1.5)--(6.5,1.5);

        \draw (3,0)--(3,4)--(9.5,4);
        \draw[blue, very thick, ->] (8,4) -- (9,4);
        
        \draw (4.5,0)--(4.5,6)--(7.5,6);
        \draw[red, very thick, ->] (6,6) -- (7,6);

        \draw (6,0)--(6,3)--(8.5,3);
        \draw[green, very thick, ->] (6,2) -- (6,3);

        \draw[ultra thick, red, dashed, ->] (-9,0) -- (-9,6) -- (-10,6) -- (-10,9)--(-12,9)--(-12,11);
        \draw[ultra thick, blue, dashed, ->] (-7,0) -- (-7,4) -- (-8,4) -- (-8,6)--(-9,6);
        \draw[ultra thick, green, dashed, ->] (-4.5,0) -- (-4.5,2) -- (-6,2) -- (-6,4)--(-7,4);

        \draw[ultra thick, green, dashed, ->] (8,0) -- (8,3) --(6,3) -- (6,4) -- (3,4)--(3,11);
        \draw[ultra thick, blue, dashed, ->] (10.5,0) -- (10.5,3) -- (9,3) -- (9,5);
        \draw[ultra thick, red, dashed, ->] (12,0) -- (12,5) -- (7,5) -- (7,11);

\begin{scriptsize}        
        \node[label=below:$b_1$] (b1) at (-9,0.3) {};
        \node[label=below:$b_3$] (b3) at (-7,0.3) {};
        \node[label=below:$b_5$] (b5) at (-4.5,0.3) {};
        \node[label=below:$b_2$] (b2) at (12,0.3) {};
        \node[label=below:$b_4$] (b4) at (10.5,0.3) {};
        \node[label=below:$b_6$] (b6) at (8,0.3) {};

        \node[label=below:$L_1$] (L1) at (-12,0.3) {};
        \node[label=below:$L_2$] (L2) at (-10,0.3) {};
        \node[label=below:$L_3$] (L3) at (-8,0.3) {};
        \node[label=below:$L_4$] (L4) at (-6,0.3) {};
        \node[label=below:$L_5$] (L5) at (-3.25,0.3) {};
        \node[label=below:$L_6$] (L6) at (-2,0.3) {};
        \node[label=below:$L_7$] (L7) at (0,0.3) {};
        \node[label=below:$L_8$] (L8) at (1.5,0.3) {};
        \node[label=below:$L_9$] (L9) at (3,0.3) {};
        \node[label=below:$L_{10}$] (L10) at (4.5,0.3) {};
        \node[label=below:$L_{11}$] (L11) at (6,0.3) {};
\end{scriptsize}        
        
        \end{tikzpicture}
    \caption{We assume $b_1 \prec b_2 \prec \ldots \prec b_6$.
    The pillar $P_{b_3}$ belongs to the left neighbourhood of $(b_5,b_6)$, which is witnessed by $L_3$ and $L_6$.
    The pillar $P_{b_4}$ belongs to the right neighbourhood of $(b_5,b_6)$, which is witnessed by $L_9$.
    The left neighbourhood of $(b_5,b_6)$ consists of $P_{b_1},P_{b_3},P_{b_5}$, the right neighbourhood of $(b_5,b_6)$ 
    consists of $P_{b_2},P_{b_4},P_{b_6}$.
    A~gray area shows a region where unassigned $L$-shapes from $(b_5,b_6)$ must be contained.
    } 
    \label{fig:pillardeg}
\end{figure}

We require a lemma which was used to prove the quadratic $\chi$-bounding function for overlap graphs~\cite{DaviesMcCarty}.
The lemma is stated somewhat differently in this paper, so we include a proof for completeness. As hinted at earlier, the lemma can be seen as a permutation graph analogue of a theorem of Capoyleas and Pach~\cite{capoyleasturan62}.
A somewhat more abstract version of this lemma with tight bounds was also proven in~\cite{davies2021circle}.

\begin{lemma}[Davies and McCarty \cite{DaviesMcCarty}]\label{permutation bound}
    Let $\cal I$ be a finite set of open intervals with non-empty intersection. If $A$ is the set of endpoints of intervals in $\cal I$ and $\omega$ is the clique number of the overlap graph of $\cal I$, then $|{\cal I}|\le \omega (|A|-1)$.
\end{lemma}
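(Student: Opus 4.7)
The plan is to exploit the common-point hypothesis to convert the overlap relation into a two-coordinate comparability relation, and then apply Mirsky's theorem. Fix a point $p$ lying in every interval of $\mathcal{I}$; because all intervals are open, $p$ is not an endpoint of any of them, so $A$ partitions disjointly as $A = L \sqcup R$ with $L = A \cap (-\infty,p)$ and $R = A \cap (p,\infty)$. In particular, $|L|+|R|=|A|$.

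Next I would define a partial order $\preceq$ on $\mathcal{I}$ by declaring $I \preceq J$ whenever $\ell(I)\le\ell(J)$ and $r(I)\le r(J)$, where $\ell(I),r(I)$ denote the left and right endpoints of $I$. The key observation, which uses the common-point assumption essentially, is that two distinct members of $\mathcal{I}$ overlap if and only if they are strictly comparable under $\preceq$: if they are $\preceq$-incomparable, then since both contain $p$ one of them must contain the other. Consequently, chains in $(\mathcal{I},\preceq)$ correspond exactly to cliques in the overlap graph, so the maximum chain length equals $\omega$. Mirsky's theorem then partitions $\mathcal{I}$ into $\omega$ antichains $\mathcal{A}_1,\ldots,\mathcal{A}_\omega$, each of which consists of pairwise-nested intervals.

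It remains to bound the size of each antichain $\mathcal{A}$. Its members can be linearly ordered by strict containment as $I_1 \supsetneq I_2 \supsetneq \cdots \supsetneq I_k$, giving a weakly increasing sequence $\ell(I_1)\le\cdots\le\ell(I_k)$ of values in $L$ and a weakly decreasing sequence $r(I_1)\ge\cdots\ge r(I_k)$ of values in $R$; the distinctness of the intervals forces at least one of these two inequalities to be strict at each of the $k-1$ consecutive steps. The number of strict increases in the $\ell$-sequence is at most $|L|-1$ and the number of strict decreases in the $r$-sequence is at most $|R|-1$, so $k-1 \le (|L|-1)+(|R|-1)=|A|-2$, giving $|\mathcal{A}|=k\le|A|-1$. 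Summing over the $\omega$ antichains yields $|\mathcal{I}|\le\omega(|A|-1)$. The only conceptually nontrivial step is the overlap-versus-$\preceq$-comparability equivalence; everything else is routine, so I do not anticipate a serious obstacle.
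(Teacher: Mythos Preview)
Your overall strategy matches the paper's: partition $\mathcal{I}$ into $\omega$ families of pairwise non-overlapping (hence pairwise nested) intervals, and then bound each such family by $|A|-1$. The paper phrases the reduction via perfection of permutation graphs rather than Mirsky's theorem, and handles the nested case by an induction that locates an endpoint belonging to only one interval; your direct count of strict jumps in the $\ell$- and $r$-sequences is a clean alternative that yields the same bound.

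There is, however, a small but genuine gap in your equivalence claim. With $\preceq$ defined via non-strict inequalities, ``strictly $\preceq$-comparable'' does \emph{not} imply ``overlapping'': if $\ell(I)=\ell(J)$ and $r(I)<r(J)$ then $I\preceq J$ with $I\neq J$, yet $I\subsetneq J$. Thus chains in $(\mathcal{I},\preceq)$ can be longer than $\omega$ (e.g.\ $(0,1),(0,2),\ldots,(0,k)$ is a chain of length $k$ while the overlap graph is edgeless), and Mirsky then gives too many antichains for the final summation. The repair is immediate: define the strict order by $I\prec J$ iff $\ell(I)<\ell(J)$ \emph{and} $r(I)<r(J)$. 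With the common point $p$, two distinct intervals are $\prec$-comparable if and only if they overlap, so the longest chain has length exactly $\omega$, antichains are pairwise-nested families, and your counting argument for each antichain goes through verbatim.
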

\begin{proof}
    Let $c \in \mathbb{R}$ be a point which is contained in every interval in $\cal I$. The overlap graph of $\cal I$ is a permutation graph where the two permutations are given by 1) the order of the leftmost endpoints of intervals in $\cal I$ (which are all less than $c$) and 2) the order of the rightmost endpoints of intervals in $\cal I$ (which are all more than $c$). So as permutation graphs are perfect, there is a partition of $\cal I$ into $\omega$ sets, each containing no pair of overlapping intervals. Hence it is enough to prove the lemma in the case that $\omega=1$.
	
	This is trivial if $|{\cal I}|=1$, so we may proceed inductively. It is enough to show that there exists a point in $A$ which is an endpoint of just one interval in ${\cal I}$. Let $a^-= \max\{a\in A : a< c\}$ and $a^+=\min\{a\in A : a > c\}$. Suppose that each of $a^-$ and $a^+$ are endpoints of at least two intervals in ${\cal I}$. Then there exists points $b^+,b^-\in A$ with $b^+ > a^+$ and $b^- < a^-$ so that $(a^-,b^+)$ and $(b^-,a^+)$ are intervals in $\cal I$. But then $(a^-,b^+)$ and $(b^-,a^+)$ overlap, a contradiction. So we conclude that either $a^-$ or $a^+$ is an endpoint of just one interval in ${\cal I}$, as we require.
\end{proof}

We now prove an extremal lemma for the additional- and left-degrees, and then we conclude this section by proving an extremal lemma for the right-degree.

\begin{lemma}\label{left}
	Let $\cal L$ be a flat L-collection with ordered pillars $({\cal P}, \prec)$ so that $G({\cal L})$ has clique number $\omega$, and let $\cal J$ be a finite, non-empty collection of disjoint open intervals contained in some segment $S$. Then $\sum_{J \in {\cal J}}d^a(J) \le \omega (|{\cal J}| + d^\ell(S) -1)$.
\end{lemma}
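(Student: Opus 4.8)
The plan is to reduce to Lemma~\ref{permutation bound} by building a family ${\cal I}$ of open intervals that share a common point. If $b^-=-\infty$ then no pillar has a base $\le b^-$, so $N^a(J)=\emptyset$ and $d^a(J)=0$ for every $J$, and the inequality is trivial; so assume $b^-\in\mathbb{R}$. For each $J\in{\cal J}$ fix a point $m_J\in J$, and for each $J$ and each pillar $P\in N^a(J)$ (with base $b_P$ and witnessing L-shapes $L=L_{J,P}$, $L'=L'_{J,P}$ as in the definition of $d^a$) add to ${\cal I}$ the open interval $I_{J,P}=(b_P,m_J)$. Since $b_P\le b^-$ and $b_P$ is a base, $b_P<b^-<m_J$, so $I_{J,P}$ is a genuine open interval containing $b^-$.

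Two bookkeeping facts drive the argument. First, $(J,P)\mapsto I_{J,P}$ is injective ($m_J$ recovers $J$, $b_P$ recovers $P$), so $|{\cal I}|=\sum_{J\in{\cal J}}d^a(J)$, and every interval in ${\cal I}$ contains $b^-$, so ${\cal I}$ has nonempty intersection. Second, let $A$ be the set of endpoints of intervals in ${\cal I}$: the right endpoints lie in $\{m_J:J\in{\cal J}\}$, and the left endpoints lie in $\{b_P:P\in N^\ell(S)\}$, because $N^a(J)\subseteq N^\ell(S)$ — indeed, if $P\in N^a(J)$ is witnessed by $L,L'$, then $\ell(L')\in J\subseteq S$ and $r(L')\in S$ force $p(L')=[\ell(L'),r(L')]\subseteq S$, so the same $L,L'$ witness $P\in N^\ell(S)$. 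As $b_P<b^-<m_J$ always, these two sets are disjoint, so $|A|\le|{\cal J}|+d^\ell(S)$. Hence, provided the overlap graph of ${\cal I}$ has clique number at most $\omega$, Lemma~\ref{permutation bound} yields $\sum_{J\in{\cal J}}d^a(J)=|{\cal I}|\le\omega(|A|-1)\le\omega(|{\cal J}|+d^\ell(S)-1)$, as required (the case ${\cal I}=\emptyset$ being trivial since $|{\cal J}|+d^\ell(S)-1\ge 0$).

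It remains to bound the clique number of the overlap graph of ${\cal I}$. Suppose $I_{J_1,P_1},\dots,I_{J_t,P_t}$ pairwise overlap. They all contain $b^-$, so pairwise overlapping simply means that for $s\ne s'$ the order of $b_{P_s},b_{P_{s'}}$ agrees with the order of $m_{J_s},m_{J_{s'}}$; reindex so that $b_{P_1}<\dots<b_{P_t}$, and then $m_{J_1}<\dots<m_{J_t}$, so the intervals $J_1,\dots,J_t$ are distinct and, being disjoint, occur left-to-right in this order. Write $L_s=L_{J_s,P_s}$, $L'_s=L'_{J_s,P_s}$. I would record the following geometry. (i) $P_s$ lies in $\{x\le b_{P_s}\le b^-\}$ (a pillar only goes up and to the left from its base) while $L_s\subseteq\{x\ge\ell(L_s)\}$, so $L_s\cap P_s\ne\emptyset$ forces $\ell(L_s)\le b^-$, hence $\ell(L_s)<b^-$ as $b^-$ is a base; consequently the intersection of $L_s$ and $L'_s$ must be the vertical segment of $L'_s$ meeting the horizontal segment of $L_s$ (the only alternative would put $\ell(L_s)\in p(L'_s)\subseteq S$), so $\ell(L'_s)\in p(L_s)$, $h(L_s)\le h(L'_s)$, and since $r(L_s)\ge\ell(L'_s)>b^-$ the point $b^-$ lies in the interior of $p(L_s)$. (ii) For $s<s'$, if $r(L'_s)\ge\ell(L'_{s'})$ then $\ell(L'_{s'})\in p(L'_s)\setminus J_s$, contradicting the defining property of the witness $L'_s$; so $r(L'_s)<\ell(L'_{s'})$. (iii) Combining (i) and (ii), for $s<s'$ we get $\ell(L_{s'})<b^-<\ell(L'_s)$ and $r(L'_s)<\ell(L'_{s'})\le r(L_{s'})$, so $p(L'_s)\subseteq p(L_{s'})$, and flatness of ${\cal L}$ forbids $h(L'_s)>h(L_{s'})$; with $h(L_s)\le h(L'_s)$ this gives $h(L_1)<\dots<h(L_t)$.

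Given (i) and (iii), the L-shapes $L_1,\dots,L_t$ pairwise intersect precisely when $\ell(L_1)<\dots<\ell(L_t)$ (for $s<s'$, since $h(L_s)<h(L_{s'})$ and $\ell(L_{s'})<b^-<r(L_s)$, one has $L_s\cap L_{s'}\ne\emptyset$ iff $\ell(L_s)\le\ell(L_{s'})$), and establishing this last ordering is the main obstacle. I expect one argues by contradiction: an inversion $\ell(L_{s'})<\ell(L_s)$ with $s<s'$ puts $b_{P_{s'}}$ strictly between $\ell(L_s)$ and $r(L_s)$ while $L_s$ has the lower corner, so the pillar $P_{s'}$ cannot escape the region $\{x\ge\ell(L_s)\}\cap\{y\le h(L_s)\}$ — and hence cannot reach $L_{s'}$ — without first meeting $L_s$; as $L_s$ is assigned to $P_s$, this forces $P_s\prec P_{s'}$, and then, chasing how the pillars were drawn and which L-shape each was assigned to, one reaches a contradiction (this is the extra way of finding intersecting L-shapes alluded to in Section~\ref{sec:prelim}). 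With $\ell(L_1)<\dots<\ell(L_t)$ in hand, $\{L_1,\dots,L_t\}$ is a clique of $G({\cal L})$, so $t\le\omega$, completing the proof.
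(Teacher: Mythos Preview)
Your reduction to Lemma~\ref{permutation bound} and the bookkeeping are correct and match the paper. Steps (i)--(iii) are also correct; in particular the height ordering $h(L_1)<\dots<h(L_t)$ via flatness is a clean observation that the paper does not make. The gap is in the last paragraph: you leave the ordering $\ell(L_1)<\dots<\ell(L_t)$ as a sketch and attribute the contradiction to ``the extra way of finding intersecting L-shapes alluded to in Section~\ref{sec:prelim}''. That reference is to the cascading configuration of Lemma~\ref{clique}, which is the device for the \emph{right}-degree lemma (Lemma~\ref{right}); it plays no role here and does not yield the contradiction you want.

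The missing tool is simply Lemma~\ref{lem:assigned}. For every $s<s'$ one has $\ell(L_s)<b_{P_s}<b_{P_{s'}}\le b^-<r(L_s)$, so the point $(b_{P_{s'}},0)$ of $P_{s'}$ lies in $p(L_s)\times[0,h(L_s)]$; Lemma~\ref{lem:assigned} then gives $P_s\prec P_{s'}$ (unconditionally --- your geometric escape argument is more work than needed and uses the inversion hypothesis unnecessarily). Now if $\ell(L_{s'})<\ell(L_s)$ for some $s<s'$, then also $\ell(L_{s'})<b_{P_s}\le b^-<r(L_{s'})$, so $(b_{P_s},0)\in p(L_{s'})\times[0,h(L_{s'})]$ is a point of $P_s$, and Lemma~\ref{lem:assigned} gives $P_{s'}\preceq P_s$, contradicting $P_s\prec P_{s'}$. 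That is the entire contradiction; no pillar-chasing is required.

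For comparison, the paper's proof also uses Lemma~\ref{lem:assigned} to get $P_1\prec\dots\prec P_{\omega+1}$ and hence $\ell(L_1)<\dots<\ell(L_{\omega+1})$, but the endgame is different: rather than establishing the $h$-ordering, it picks a non-intersecting pair $L_i,L_j$ (which must exist since $t=\omega+1$), deduces $h(L_i)>h(L_j)$, and then uses flatness to force $\ell(L_j')\in p(L_i')\setminus J_i$, contradicting the additional-degree witness property. You use that witness property and flatness earlier (your steps (ii)--(iii)) to get the $h$-ordering up front, so that once $\ell$-ordering is in hand the clique $\{L_1,\dots,L_t\}$ is immediate. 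With the gap filled, your route is a legitimate and arguably tidier alternative.
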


\begin{proof}
    Going for a contradiction, suppose otherwise. Construct an auxiliary collection of open intervals $\cal I$ where, for each $J \in {\cal J}$ and $P \in N^a(J)$, we add an interval between the rightmost endpoint of $J$ and the base of $P$. We may assume that $\cal I$ is non-empty since otherwise the lemma is trivially true. Thus $\cal I$ has non-empty intersection (note that every interval in $\cal I$ contains the leftmost interval in $\cal J$). Furthermore, $|{\cal I}| = \sum_{J \in {\cal J}}d^a(J)$ and the intervals in $\cal I$ have at most $|{\cal J}| + |\bigcup_{J \in {\cal J}}N^a(J)|$ different endpoints. By the definition of the additional- and left-degrees, $|\bigcup_{J \in {\cal J}}N^a(J)| \le d^\ell(S)$. So by Lemma~\ref{permutation bound}, the overlap graph of $\cal I$ has clique number greater than~$\omega$.
     
    It follows that there are intervals $J_1, \ldots, J_{\omega+1} \in {\cal J}$ and pillars $P_1 \in N^a(J_1), \ldots,\allowbreak P_{\omega+1}\in N^a(J_{\omega+1})$ so that, where $b_1, \ldots, b_{\omega+1}$ are the bases of $P_1, \ldots, P_{\omega+1}$ respectively and $j_1, \ldots, j_{\omega+1}$ are the rightmost endpoints of $J_1, \ldots, J_{\omega+1}$ respectively, we have that $b_1 <\cdots< b_{\omega+1} < j_1 < \cdots < j_{\omega+1}$. Therefore there exist L-shapes $L_1,\dots,L_{\omega +1}, L_1', \dots L_{\omega+1}'\in {\cal L}$ such that for each $i\in \{1,\dots, \omega +1\}$, $L_i$ is assigned to $P_i$ and intersects $L_i'$, and $\ell(L_i') \in J_i$, and $p(L_i')\setminus J_i$ does not contain the left endpoint of any L-shape in $\cal L$.
    
    Notice that $P_1 \prec \dots \prec P_{\omega+1}$; this follows from Lemma~\ref{lem:assigned} since $b_i, b_{i+1}, \ldots, b_{\omega+1} \in p(L_i)$ for each $i \in \{1,2,\ldots, \omega+1\}$. Hence, again by Lemma~\ref{lem:assigned}, $\ell(L_1) \in (-\infty, b_1)$ and for $i >1$, $\ell(L_i) \in (b_{i-1}, b_i)$. So in particular $\ell(L_1)< \cdots< \ell(L_{\omega+1})$. As the clique number of the intersection graph of $\cal L$ is at most $\omega$, there exist some $i<j$ such that $L_i$ does not intersect $L_{j}$. So $h(L_i)>h(L_j)$. But then as $L_j$ intersects $L_j'$, and $L_i$ intersects $L_i'$, and $\cal L$ is flat, it follows that $\ell(L_j') \in p(L_i')\setminus J_i$, a contradiction.
\end{proof}

\begin{figure}
    \centering
    \begin{tikzpicture}[scale=0.43,>=latex,shorten >=-0.2pt]
        \draw (-14,0)--(14,0);
        
        \draw[ultra thick] (-12,0)--(-12,9)--(-8,9);
        \draw[orange, very thick, ->] (-11.5,9)--(-10.5,9);
        
        \draw[ultra thick] (-9,0)--(-9,8)--(-6,8);
        \draw[green, very thick, ->] (-8,8)--(-7,8);
        
        \draw[ultra thick] (-6,0)--(-6,7)--(-4,7);
        \draw[blue, very thick, ->] (-5.5,7)--(-4.5,7);

        \draw[ultra thick] (-3,0)--(-3,6)--(12,6);
        \draw[red, very thick, ->] (-3,5)--(-3,6);
        
        \draw[ultra thick] (-4.5,0)--(-4.5,4.5)--(9,4.5);
        \draw[blue, very thick, ->] (-4.5,3.5)--(-4.5,4.5);
        
        \draw[ultra thick] (-7,0)--(-7,3)--(6,3);
        \draw[green, very thick, ->] (-7,2)--(-7,3);
        
        \draw[ultra thick] (-10.5,0)--(-10.5,1.5)--(3,1.5);
        \draw[orange, very thick, ->] (-10.5,0.5)--(-10.5,1.5);

        \draw[ultra thick, red, dashed, ->] (10.5,0) -- (10.5,6) -- (-3,6) -- (-3,11);
        \draw[ultra thick, blue, dashed, ->] (7.5,0) -- (7.5,4.5) -- (-4.5,4.5) -- (-4.5,11);
        \draw[ultra thick, green, dashed, ->] (4.5,0) -- (4.5,3) -- (-7,3) -- (-7,11);
        \draw[ultra thick, orange, dashed, ->] (1.5,0) -- (1.5,1.5) -- (-10.5,1.5) -- (-10.5,11);

\begin{scriptsize}        
        \node[label=below:$b_1$] (b1) at (1.5,0.3) {};
        \node[label=below:$b_3$] (b3) at (7.5,0.3) {};
        \node[label=below:$b_2$] (b2) at (4.5,0.3) {};
        \node[label=below:$b_4$] (b4) at (10.5,0.3) {};

        \node[label=below:$L_1$] (L1) at (-12,0.3) {};
        \node[label=below:$L_2$] (L2) at (-9,0.3) {};
        \node[label=below:$L_3$] (L3) at (-6,0.3) {};
        \node[label=below:$L_4$] (L4) at (-3,0.3) {};
        \node[label=below:$L_1^*$] (L5) at (-10.5,0.3) {};
        \node[label=below:$L_2^*$] (L6) at (-7,0.3) {};
        \node[label=below:$L_3^*$] (L7) at (-4.5,0.3) {};
\end{scriptsize}        
        
        \end{tikzpicture}
    \caption{Cascading $L$-shapes $L_1,L_2,L_3,L_4$, and one possible configuration of pairwise intersecting $L$-shapes $L_1^*, L_2^*, L_3^*, L_4^*=L_4$ as in the conclusion of Lemma~\ref{clique}.
    } 
    \label{fig:cascad}
\end{figure}
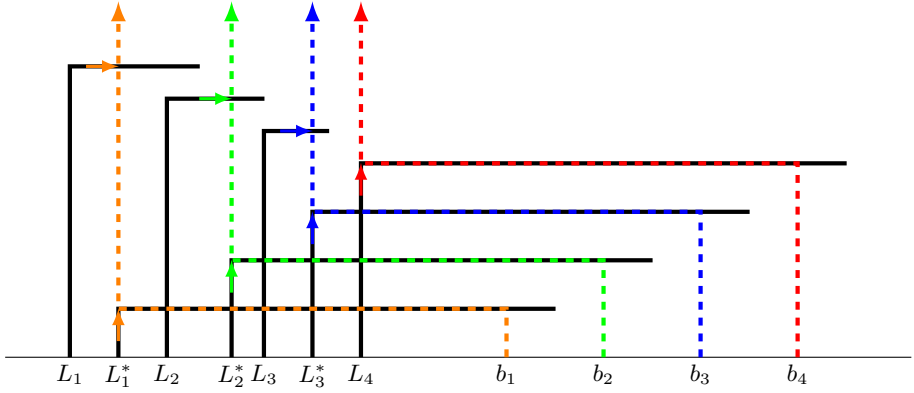

We now wish to obtain a similar lemma for the right-degree. This one requires an extra factor of $\omega$. In order to prove the lemma for the right degree, it is helpful to introduce another configuration that yields a clique elsewhere. Let $\cal L$ be a flat L-collection with ordered pillars $({\cal P}, \prec)$. We say that L-shapes $L_1,\dots , L_{t}$ are \emph{cascading} if there exist pillars $P_1,\dots, P_{t}$ with bases $b_1,\dots , b_{t}$ respectively such that
\begin{itemize}\itemsep0em
    \item[1)] $\ell(L_1)<\dots < \ell(L_{t}) <b_1 < \dots < b_{t}$,
    \item[2)] $h(L_1)>\dots > h(L_{t})$ and $P_1 \succ \dots \succ P_{t}$, and
	\item[3)] $L_{t}$ is a support of $P_{t}$ and for each $i<t$, $L_i$ is assigned to $P_i$.
\end{itemize}
\noindent We purposefully do not require that $L_t$ is assigned to $P_t$.
See Figure~\ref{fig:cascad} for an example of cascading L-shapes, as well as one way that they may yield a clique.

Next we prove that cascading L-shapes $L_1,\dots , L_{t}$  yield a clique of size $t$. It is convenient for the sake of induction to prove the following stronger statement.

\begin{lemma}\label{clique}
	Let $\cal L$ be a flat L-collection with ordered pillars $({\cal P},\prec )$ and cascading L-shapes $L_1,\dots, L_{t}$. Then there is a clique $\{L_1^*, \ldots, L_t^*\}$ in the intersection graph $G({\cal L})$ so that $\ell(L_1^*) < \cdots < \ell(L_t^*)$, for each $i\le t$, $L^*_i$ supports $P_i$, and $L_t^* = L_t$.
\end{lemma}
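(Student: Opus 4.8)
The plan is to induct on $t$. For the base case $t=1$, the hypotheses say $L_1$ is a support of $P_1$, so we may simply take $L_1^* = L_1$. For the inductive step, suppose $L_1, \ldots, L_t$ are cascading with pillars $P_1, \ldots, P_t$ and bases $b_1, \ldots, b_t$. The key idea is to peel off $L_1$ (the leftmost, highest L-shape, assigned to the latest pillar $P_1$) and apply induction to a modified cascading sequence built from the remaining shapes $L_2, \ldots, L_t$, and then slot a support of $P_1$ back in on the left.

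\textbf{Key steps.} First I would examine how $L_1$, being assigned to $P_1$, forces structure on the other shapes. Since $L_1$ is assigned to $P_1$ and intersects $P_1$, and since $\ell(L_1) < b_1$ with $b_1 \in p(L_1)$ (we need $r(L_1) > b_1$, which should follow because $L_1$ meets $P_1$ which starts at $b_1$ — either $L_1$'s horizontal part is hit, in which case flatness pushes $r(L_1)$ past $b_1$, or $L_1$ contains the corner region), the projection $p(L_1)$ contains $b_1$. More importantly, I want to argue that a support $L_1^*$ of $P_1$ exists with $\ell(L_1^*) < \ell(L_2^*)$ where $L_2^*, \ldots, L_t^*$ come from the inductive application. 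The natural candidate for $L_1^*$ is either $L_1$ itself if it is already a support of $P_1$, or, if $L_1$ merely intersects $P_1$ in one point (along a vertical segment), I would trace down $P_1$ from that intersection point: the pillar $P_1$ eventually either stops at a base or has a horizontal segment caused by a genuine support; following the staircase structure of $P_1$ down and left from where $L_1$ meets it, and using that $P_1$ was stopped only by earlier pillars, one locates a support $L_1^*$ of $P_1$ with corner to the left of $\ell(L_1)$, hence to the left of everything in the inductively-produced clique. Then I would verify $L_1^*$ intersects each of $L_2^*, \ldots, L_t^*$: since $L_2^*, \ldots, L_t^*$ are supports of $P_2, \ldots, P_t$ respectively and $P_1 \succ P_2 \succ \cdots \succ P_t$, and since $P_1$'s staircase sits below and to the right of these later pillars in the relevant region — actually the cleaner route is that each $L_i^*$ ($i \ge 2$) has its corner in the strip above the $x$-axis between consecutive bases and $L_1^*$'s horizontal segment, being a support of $P_1$ which passes near base $b_1 > $ all the $\ell(L_i)$, extends far enough right to meet them all. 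Flatness ($h(L_1^*)$ large relative to the later shapes, consistent with $h(L_1) > \cdots > h(L_t)$) guarantees the intersections are genuine rather than the shapes nesting.

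\textbf{Main obstacle.} The hard part will be setting up the inductive hypothesis correctly so that the recursion closes — specifically, ensuring that after removing $L_1$, the remaining data $L_2, \ldots, L_t$ with pillars $P_2, \ldots, P_t$ still satisfies the definition of cascading (conditions 1), 2), 3) restricted to indices $2, \ldots, t$ — conditions 1) and 2) are immediate by restriction, and 3) holds since $L_t$ is still a support of $P_t$ and each $L_i$ for $2 \le i < t$ is still assigned to $P_i$), and then arguing that the support $L_1^*$ I extract for $P_1$ genuinely intersects all of $L_2^*, \ldots, L_t^*$ and sits leftmost. This geometric verification — that a support of the latest pillar $P_1$ reaches rightward far enough to stab every $L_i^*$ while flatness prevents containment — is where I expect to spend the most care, tracking $x$-coordinates of corners and right-endpoints and invoking flatness of $\cal L$ together with Lemma~\ref{lem:assigned} to pin down the relative order of bases and the assignment of the $L_i$.
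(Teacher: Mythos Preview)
Your induction is set up from the wrong end, and this creates a real gap. The paper does \emph{not} peel off $L_1$ and recurse on $L_2,\dots,L_t$; it first constructs a support $L_{t-1}^*$ of $P_{t-1}$ that hits the vertical segment of $L_t$ (so $\ell(L_{t-1}^*)<\ell(L_t)$ and $h(L_{t-1}^*)<h(L_t)$), proves that $\ell(L_{t-2})<\ell(L_{t-1}^*)$ using Lemma~\ref{lem:assigned}, and only then applies the inductive hypothesis to the cascading sequence $L_1,\dots,L_{t-2},L_{t-1}^*$. The point is that the inductive statement pins down the \emph{last} element as $L_t^*=L_t$, so modifying the sequence near index $t-1$ and preserving $L_t$ is the natural move; the height bound $h(L_{t-1}^*)<h(L_t)$ then propagates through the recursion and forces every $L_i^*$ to intersect $L_t$.

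In your direction the problems are concrete. After recursing on $L_2,\dots,L_t$ you get $L_2^*,\dots,L_t^*$, but the inductive hypothesis gives you no lower bound on $\ell(L_2^*)$ in terms of $\ell(L_1)$: your sentence ``corner to the left of $\ell(L_1)$, hence to the left of everything in the inductively-produced clique'' silently assumes $\ell(L_1)<\ell(L_2^*)$, which is not part of the statement. Worse, even granting $\ell(L_1^*)<\ell(L_2^*)$, two grounded L-shapes with $\ell(A)<\ell(B)$ intersect only when $h(A)\le h(B)$ and $\ell(B)\le r(A)$; so for $L_1^*$ to meet $L_2^*$ you need $h(L_1^*)<h(L_2^*)$. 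Your construction of $L_1^*$ (``trace down $P_1$ from where $L_1$ meets it'') gives no such height control --- indeed $L_1$ may meet $P_1$ only on a vertical segment of $P_1$, in which case there need not be any support of $P_1$ with corner left of $\ell(L_1)$ at all, and certainly not one with height below $h(L_2^*)$. The paper's order of operations (build $L_{t-1}^*$ with explicit geometric control \emph{before} recursing) is exactly what makes these verifications go through.
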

\begin{proof}
    We prove the claim by induction on $t$. The base case of $t=1$ holds trivially. So, let $P_1,\dots , P_{t}$ be the pillars corresponding to $L_1,\dots, L_{t}$, with bases $b_1,\dots , b_{t}$ respectively. 
    
    By condition 3), $b_t \in p(L_t)$; so $b_1, \ldots, b_t \in p(L_t)$ as well. By condition 2), the pillar $P_{t-1}$ must intersect $L_t$. In fact, as $L_t$ supports $P_t$ and $P_t\prec P_{t-1}$, the pillar $P_{t-1}$ first intersects $L_t$ at a point on the vertical segment of $L_t$. So there is a support $L_{t-1}^*$ of $P_{t-1}$ with $\ell(L_{t-1}^*)<\ell(L_t)$ so that $L_{t-1}^*$ and $L_t$ intersect. If $t=2$ we are done; otherwise, by induction on $t$, it suffices to prove that $\ell(L_{t-2})<\ell(L_{t-1}^*)$, as then $L_1, \ldots, L_{t-2}, L_{t-1}^*$ would be cascading. Otherwise, $\ell(L_{t-1}^*) < \ell(L_{t-2}) <\ell(L_t)$ and so $L_{t-1}^*$ and $L_{t-2}$ intersect, as $h(L_{t-1}^*)<h(L_{t-1})<h(L_{t-2})$. But then by Lemma~\ref{lem:assigned} some pillar $P\prec P_{t-1}$ must intersect $L_{t-2}$ as $L_{t-1}^*$ supports $P_{t-1}$. This contradicts the fact that $L_{t-2}$ is assigned to $P_{t-2}$.
\end{proof}

We complete this section by proving the extremal lemma for the right-degree.

\begin{lemma}\label{right}
	Let $\cal L$ be a flat L-collection with ordered pillars $(P, \prec)$ so that $G({\cal L})$ has clique number $\omega$, and let $\cal J$ be a finite, non-empty collection of disjoint open intervals contained in some segment $S$. Then $\sum_{J \in {\cal J}}d^r(J) \le 2\omega(2\omega -1) (|{\cal J}| + d^r(S) -1)$.
\end{lemma}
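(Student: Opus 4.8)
The plan is to follow the scheme of Lemma~\ref{left}, but to replace its single clique-finding step with a case analysis that uses both mechanisms available to us: finding $\omega+1$ pairwise intersecting L-shapes directly, and finding a cascading family of length $\omega+1$ and invoking Lemma~\ref{clique}. So suppose $\sum_{J\in\mathcal J}d^r(J)>2\omega(2\omega-1)(|\mathcal J|+d^r(S)-1)$; write $S=(b^-,b^+)$, and note we may assume $b^+<\infty$ (otherwise there are no pillars with base $\ge b^+$ and the bound is trivial), and that $|\mathcal J|+d^r(S)-1\ge 1$ (otherwise the left-hand side is $0$). Build an auxiliary collection $\mathcal I$ of open intervals: for each $J\in\mathcal J$ and each pillar $P\in N^r(J)$, with base $b_P\ge b^+$, add the interval with endpoints the left endpoint of $J$ and $b_P$. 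Every such interval contains all points of $\mathbb R$ strictly between $\sup_{J}(\text{left endpoint of }J)$ and $b^+$, so $\mathcal I$ has nonempty intersection; also $|\mathcal I|=\sum_J d^r(J)$, and since $N^r(J)\subseteq N^r(S)$ for every $J\subseteq S$, the intervals of $\mathcal I$ have at most $|\mathcal J|+d^r(S)$ distinct endpoints. Hence by Lemma~\ref{permutation bound} the overlap graph of $\mathcal I$ has clique number greater than $2\omega(2\omega-1)$.

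So there are $M:=2\omega(2\omega-1)+1$ pairwise overlapping intervals in $\mathcal I$. Two intervals of $\mathcal I$ arising from the same $J$ share a left endpoint, hence cannot overlap, so these $M$ intervals come from distinct $J_1,\dots,J_M\in\mathcal J$, listed left to right; the overlap condition then forces the corresponding bases to satisfy $b_1<\dots<b_M$. Picking for each $s$ an L-shape $L_s$ assigned to $P_s$ with $\ell(L_s)\in J_s$, we obtain $\ell(L_1)<\dots<\ell(L_M)<b^+\le b_1<\dots<b_M$. Recall that for $s<t$ the L-shapes $L_s,L_t$ are adjacent in $G(\mathcal L)$ exactly when $\ell(L_t)\le r(L_s)$ and $h(L_s)<h(L_t)$.

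Next, partition $\{L_1,\dots,L_M\}$ according to whether or not $L_s$ is a support of $P_s$; one class $\mathcal K$ has at least $\omega(2\omega-1)+1$ members. Order $\mathcal K$ by $\ell$ and apply the Erd\H os--Szekeres theorem to the associated heights: since $\omega(2\omega-1)+1>\omega\cdot(2\omega-1)$ this produces an increasing (in $h$) subsequence of length $\omega+1$ or a decreasing (in $h$) subsequence of length $2\omega$. Suppose first $\mathcal K$ consists of supports. If we are in the increasing case, the $\omega+1$ chosen L-shapes pairwise intersect, because $L_s$ being a support of $P_s$ gives $b_s\in p(L_s)$, so for $s<t$ in the subsequence $\ell(L_t)<b^+\le b_s\le r(L_s)$ and $h(L_s)<h(L_t)$; this is a clique of size $\omega+1$, contradicting $\omega(G(\mathcal L))=\omega$. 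If we are in the decreasing case, the first $\omega+1$ of the chosen L-shapes, with their pillars, form a cascading family: condition~1) and condition~3) are immediate (the $\ell(L_s)$ lie in $S$, hence left of all the bases, and each $L_s$ supports $P_s$), and the ordering $P_{s_1}\succ\dots\succ P_{s_{\omega+1}}$ required by condition~2) follows from Lemma~\ref{lem:assigned}: for $s<t$ in the subsequence, $b_s\in(\ell(L_t),r(L_t))\subseteq p(L_t)$, so the point $(b_s,0)\in p(L_t)\times[0,h(L_t)]$ lies on $P_s$, forcing $P_t\prec P_s$. Then Lemma~\ref{clique} gives a clique of size $\omega+1$, again a contradiction.

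It remains to treat the case where $\mathcal K$ consists of L-shapes that are \emph{not} supports of their pillars. Here $L_s$ meets $P_s$ in a single point, and from the local picture at that point one extracts a support $L_s'$ of $P_s$ with $h(L_s')<h(L_s)$ together with control on the relative positions of $\ell(L_s')$, $\ell(L_s)$, and $r(L_s)$. One then substitutes these supports for the $L_s$, using the slack of $2\omega$ (rather than just $\omega+1$) delivered by Erd\H os--Szekeres to absorb a bounded number of discards, and again assembles a cascading family to which Lemma~\ref{clique} applies. This last case is the main obstacle: one must check carefully that the supports of the pillars can, after discarding a controlled number of them, be arranged into a bona fide cascading family — in particular that their corners $(\ell(L_s'),h(L_s'))$ are positioned so that condition~1) survives and that the induced pillar order is decreasing. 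This bookkeeping, together with the two-way split into supports and non-supports, is exactly what inflates the constant from the $\omega(|\mathcal J|+d^\ell(S)-1)$ of Lemma~\ref{left} to $2\omega(2\omega-1)(|\mathcal J|+d^r(S)-1)$.
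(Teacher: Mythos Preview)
Your setup through the application of Lemma~\ref{permutation bound} is correct and matches the paper. The difficulty is in what comes after, and there your proposal has a genuine gap.

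You split the $M=2\omega(2\omega-1)+1$ L-shapes into ``supports of their pillars'' and ``non-supports'', handle the supports case completely (that part is fine), and then only sketch the non-supports case. That sketch is not a proof: you assert that for each non-support $L_s$ one can extract a support $L_s'$ of $P_s$ with $h(L_s')<h(L_s)$ and then ``substitute these supports'' into a cascading family, but you give no control over $\ell(L_s')$ and no argument that the substituted family satisfies condition~1) of cascading, nor do you establish the pillar order $P_{s_1}\succ\cdots\succ P_{s_{\omega+1}}$ in this case (your argument for that order used $b_s\in p(L_t)$, which relied on $L_t$ being a support). As written, the non-supports case is the hard half and it is missing.

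The paper avoids this bifurcation altogether. The key step you did not find is that one can prove $P_1\succ\cdots\succ P_M$ for \emph{all} $M$ L-shapes at once, with no hypothesis that any $L_s$ is a support: if $P_i\prec P_j$ for some $i<j$, then since $L_j$ meets $P_j$ but not $P_i$ one argues (using that $\ell(L_i)<\ell(L_j)<b_i<b_j$) that $L_j$ meets $L_i$; flatness then gives $r(L_j)>r(L_i)$, whence the rectangle $[\ell(L_j),r(L_i)]\times[0,h(L_i)]\subset p(L_j)\times[0,h(L_j)]$ contains a point of $P_i$, contradicting Lemma~\ref{lem:assigned}. With the pillar order in hand, Erd\H os--Szekeres is applied directly to all $M$ heights, yielding a decreasing run of length $2\omega+1$ or an increasing run of length $2\omega$. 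In each case one locates a \emph{single} support $L'$ of an appropriate pillar (rather than one per index) and appends it: in the decreasing case $L'$ becomes the last element of a cascading family of length $\omega+1$ (invoking Lemma~\ref{clique}); in the increasing case $L'$ together with the top $\omega$ L-shapes of the run is shown to be a clique directly. The extra factor of~$2$ in the run lengths is precisely what buys enough room to pin down where $\ell(L')$ sits relative to the other $\ell(L_s)$'s.

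So the fix is not to complete your non-supports case, but to drop the split, prove the pillar order globally first, and then find one support rather than many.
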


\begin{proof}
For the sake of contradiction, suppose otherwise. The first part of the proof is very similar to the proof of Lemma~\ref{left}, where we consider an auxiliary collection of open intervals. 

So, construct a collection of open intervals $\cal I$ where, for each $J \in {\cal J}$ and $P \in N^r(J)$, we add an interval between the leftmost endpoint of $J$ and the base of $P$. We can assume that $\cal I$ is non-empty since otherwise the lemma holds. Then $\cal I$ has non-empty intersection since every interval in $\cal J$ contains the rightmost interval in $\cal J$. Furthermore, $|{\cal I}|=d^r({\cal J})$ and the intervals in $\cal I$ have at most $|{\cal J}| + |\bigcup_{J \in {\cal J}}N^r(J)| \leq |{\cal J}|+d^r(S)$ different endpoints. So by Lemma~\ref{permutation bound}, the overlap graph of $\cal I$ has clique number greater than~$2\omega(2\omega-1)$. 

Hence there are L-shapes $L_1,\dots, L_{2\omega(2\omega -1) +1}$ which are assigned to pillars $P_1, \ldots,\allowbreak P_{2\omega(2\omega -1) +1}$ with bases $b_1, \ldots, b_{2\omega(2\omega -1) +1}$ respectively, so that \begin{align*}\ell(L_1)<\cdots < \ell(L_{2\omega(2\omega -1) +1})<b_1< \cdots <b_{2\omega(2\omega -1) +1}.\end{align*}

\noindent We claim that $P_1 \succ \dots \succ P_{2\omega(2\omega -1)+1}$ as well. Otherwise, if $P_i \prec P_j$ for some $i<j$, then $L_j$ intersects $L_i$, as $L_j$ must intersect $P_j$ and not $P_i$. Furthermore as $\cal L$ is flat, we have $r(L_j)>r(L_i)$. But then $[\ell(L_j),r(L_i)] \times [0,h(L_i)] \subset p(L_j) \times [0,h(L_j)]$ must contain a point of $P_i$, contradicting Lemma~\ref{lem:assigned}.

Now, by the Erd\H{o}s–Szekeres Theorem~\cite{erdos1935combinatorial}, the sequence $h(L_1), \ldots, h(L_{2\omega(2\omega -1) +1})$ contains either a decreasing subsequence of size $2\omega+1$ or an increasing subsequence of size $2\omega$. We take care of the two cases separately. 
\medskip

\textbf{Case 1:} There are indices $i_1 < \cdots<i_{2\omega+1}$ so that $h(L_{i_1})> \cdots> h(L_{i_{2\omega+1}})$.

Note that we already have conditions 1) and 2) for the L-shapes $L_{i_1}, \ldots, L_{i_{2\omega+1}}$ to be cascading. So, roughly, it just remains to find a special L-shape $L'$ which supports its pillar. First, if $b_{i_{\omega+1}} \notin p(L_{i_{2\omega+1}})$, then there must exist an L-shape $L'$ supporting $P_{i_{2\omega+1}}$ with $\ell(L_{i_{2\omega+1}}) < \ell(L')< b_{i_{\omega+1}}$ and $h(L') < h(L_{i_{2\omega + 1}})$. But then the L-shapes $L_{i_{\omega+1}},\dots , L_{i_{2\omega}},L'$ are cascading, and by Lemma~\ref{clique} there is a clique of size $\omega+1$, a contradiction. 

Hence $b_{i_{\omega+1}} \in p(L_{i_{2\omega+1}})$. Now there must exist some L-shape $L'$ supporting $P_{i_{\omega+1}}$ with $\ell(L') \le \ell(L_{i_{2\omega+1}})$ and $h(L') \le h(L_{i_{2\omega+1}})$ (possibly with $L'=L_{i_{2\omega+1}}$). We must also have that $\ell(L_\omega)<\ell(L')$ as otherwise $L_\omega$ and $L'$ intersect and, since $\cal L$ is flat, $b_{i_{\omega+1}} \in p(L_{\omega})$, a contradiction to the fact that $L_\omega$ is assigned to $P_\omega$. But now the L-shapes $L_{i_1}, \dots , L_{i_{\omega}}, L'$ are cascading, again contradicting Lemma~\ref{clique}.
\medskip

\textbf{Case 2:} There are indices $j_1 < \cdots<j_{2\omega}$ so that $h(L_{j_1})< \cdots< h(L_{j_{2\omega}})$.

Now there must exist some $t\in \{1,\dots , \omega\}$ such that $\ell(L_{j_{\omega +1}}) \notin p(L_{j_t})$, as otherwise $L_{j_1}, \dots , L_{j_{\omega +1}}$ would be pairwise intersecting. As $L_{j_t}$ intersects the pillar $P_{j_t}$, there must exist some L-shape $L'$ with $\ell(L')< \ell(L_{j_{\omega +1}})$ and $h(L')< h(L_{j_{\omega +1}})$ that supports $P_{j_t}$. But now as $\cal L$ is flat, the L-shapes $L', L_{j_{\omega +1}}, \dots , L_{j_{2\omega}}$ are pairwise intersecting, a contradiction. This completes the proof of Lemma~\ref{right}.
\end{proof}

\section{Main result}
Theorem~\ref{thm:main} will follow quickly from the main proposition below on finding complete pillar assignments. Recall that the degree is the sum of the left- and right-degrees.

\begin{proposition}\label{complete pillar assignment}
	Every flat L-collection $\cal L$ whose intersection graph has clique number $\omega$ has a complete pillar assignment using at most $4\omega^2 -\omega +2\lceil 4\log_2 (\omega) \rceil +11$ colors.
\end{proposition}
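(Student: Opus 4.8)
The plan is to prove Proposition~\ref{complete pillar assignment} by a "keep extending" induction, starting from an empty pillar assignment and repeatedly enlarging it until every L-shape is covered. The key invariant to maintain is not just that we have a pillar assignment using few colors, but that each \emph{segment} of the current set of bases has bounded degree — more precisely, bounded left-degree plus bounded right-degree plus bounded additional-degree, with the three bounds chosen so that the extremal lemmas (Lemma~\ref{left} and Lemma~\ref{right}) can be applied with room to spare. I would pick threshold constants, say a left/additional bound of roughly $2\omega^2$ and a right bound of roughly $2\omega^2$, tuned so that the final color count comes out to $4\omega^2-\omega+2\lceil 4\log_2\omega\rceil+11$. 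The induction terminates because each extension step strictly decreases the number of uncolored L-shapes (or we can measure progress by the number of bases, capped since only finitely many L-shapes exist).

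The heart of the argument is the \emph{local extension step}: given a segment $S=(b^-,b^+)$ of the current ordered pillars whose degree is within the allowed bound, and which still contains uncolored L-shapes grounded in it, I would show how to add finitely many new bases inside $S$, order and color them so as to extend the pillar assignment, and cover all the uncolored L-shapes grounded in $S$. First I subdivide $S$ greedily into consecutive sub-intervals $J_1,\dots,J_n$, each of which has small degree — small enough that, after placing a base in each $J_i$, the induced pieces still satisfy the degree invariant (this is where the additional-degree buffer is needed: chopping $S$ can make the left-degree of a piece jump, and the additional-degree of a parent interval bounds that jump, so we subdivide using the additional-degree as the controlling quantity via Lemma~\ref{left}). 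The subdivision terminates: if it didn't, we'd have produced a large family $\cal J$ of disjoint intervals in $S$, each of large degree, and then Lemma~\ref{left} together with Lemma~\ref{right} (applied to the left- and right-degrees respectively) would force $\sum_{J}d(J)$ to be bounded by $O(\omega^2)\cdot(|\cal J| + d(S))$, contradicting that each $d(J)$ is large while $|\cal J|$ is large — i.e., the number of pieces $n$ is bounded by a function of the original degrees and $\omega$, independent of $\cal L$.

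Once $n$ is bounded, I place one base in each $J_i$; these new bases, together with suitable colors from a fresh palette of $\lceil \log_2 n\rceil$ colors applied via Lemma~\ref{divide-and-conquer}, extend the ordered pillars. The divide-and-conquer coloring guarantees: whenever two new bases get the same new color, there is an intermediate new base that is earlier in the order than both — which by Lemma~\ref{lem:assigned} forces any L-shape intersecting pillars at both same-colored bases to actually be assigned to that earlier intermediate pillar, so the pillar-assignment condition $c(P_1)\ne c(P_2)$ is preserved on the new colors. For the interaction between new pillars and old pillars, the key point is that any old L-shape relevant to a new base — one grounded to the left of the segment and hitting an uncolored L-shape grounded in $S$, or one grounded inside $S$ — is exactly what the degree of $S$ counts; since $d(S)$ is bounded, we reserve $d(S)$-many "old" colors and use the new palette disjointly from them, so no new–old conflict arises either. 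Finally I verify that every L-shape grounded in $S$ and not previously colored is now assigned: by Lemma~\ref{lem:assigned}, placing a base anywhere in $p(L)$ assigns $L$, and since we've placed a base in every piece $J_i$ of $S$, every such $L$ has a base under it.

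The main obstacle I expect is the bookkeeping of the degree invariant across an extension — specifically, showing that after inserting the new bases, \emph{every} resulting segment (the small pieces $J_i$, and also the two "outer" pieces $(b^-, \text{first new base})$ and $(\text{last new base}, b^+)$, and segments far from $S$ which are unaffected) still satisfies the left/right/additional-degree bounds, so that the induction hypothesis applies to the next round. The far-away segments are untouched. The pieces inside $S$ are controlled by the greedy subdivision choice combined with the additional-degree buffer: we must choose the subdivision so that not only is each $d(J_i)$ below the extension threshold, but also each $d^a(J_i)$, $d^\ell(J_i)$, $d^r(J_i)$ stays below the \emph{maintenance} threshold — and the subtlety is that $d^\ell$ of a piece can exceed $d^\ell$ of $S$ because an L-shape grounded in $S$ but outside $J_i$ now counts toward the left-degree of $J_i$; bounding this requires exactly the additional-degree condition (the $p(L')\setminus J$ clause in its definition) and Lemma~\ref{left}. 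I would set the maintenance threshold a fixed additive constant below the extension threshold and choose the greedy cut points to respect it, then check that the arithmetic closes up. Getting these constants to simultaneously satisfy the extremal inequalities and produce the stated bound $4\omega^2-\omega+2\lceil 4\log_2\omega\rceil+11$ — together with the factor $2\omega^2$ loss from Lemmas~\ref{flattened} and~\ref{colouring L with pillars} that turns $k$ colors on the pillar assignment into $2\omega^2 k$ on the graph, reconciled against Theorem~\ref{thm:main}'s $17\omega^4$ — is the delicate part.
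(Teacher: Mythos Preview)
Your approach is essentially the paper's: maintain a pillar assignment in which every segment has degree at most a fixed threshold (the paper uses $4\omega^2-\omega+\lceil 4\log_2\omega\rceil+6$), take one that colors as many L-shapes as possible subject to this invariant, and derive a contradiction by extending it inside a segment $S$ containing an uncolored L-shape. The greedy subdivision of $S$, the use of Lemmas~\ref{left} and~\ref{right} to bound the number of pieces, the divide-and-conquer coloring via Lemma~\ref{divide-and-conquer}, and the role of the additional-degree as a buffer for jumps in left-degree are all exactly as in the paper.

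There is one genuine error in your description. You claim the extension step ``covers all the uncolored L-shapes grounded in $S$'', arguing that ``since we've placed a base in every piece $J_i$ of $S$, every such $L$ has a base under it''. This is false: an uncolored L-shape $L$ with $p(L)$ strictly contained in a single piece $J_i$ need not receive a base in $p(L)$, so it need not be assigned after the extension. The paper does not attempt this. It only guarantees that \emph{one} additional L-shape gets colored --- namely the fixed uncolored L-shape $L^*$, by explicitly throwing an extra base into $p(L^*)$ (this is the point $b$ in the paper's set $B^*=\{b,b_1,\ldots,b_t\}$). That is already enough to contradict maximality of the original pillar assignment, which is exactly your ``strictly decreases the number of uncolored L-shapes'' termination criterion. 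So your framing in the first paragraph is correct, but your detailed claim in the third paragraph overreaches; weaken it to ``covers at least $L^*$'' and the argument closes.
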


\begin{proof}
	This is certainly true when $\omega=1$ because, by Lemma~\ref{lem:assigned}, we can continue placing bases until every L-shape is assigned (we give every pillar the same color). So we may assume that $\omega \ge 2$. 
	
	Throughout the proof we will only use colors in $\{1,\dots ,4\omega^2 -\omega +2\lceil 4\log_2 (\omega) \rceil +11\}$. Now, choose a pillar assignment $({\cal P}, \prec ,c)$ so that \begin{itemize}\itemsep0em
    \item[1)] each segment has degree at most $4\omega^2 -\omega +\lceil 4\log_2 (\omega) \rceil +6$, and
    \item[2)] subject to the above, $\phi_{({\cal P}, \prec ,c)}$ colors as many L-shapes in $\cal L$ as possible.
    \end{itemize}
	\noindent Suppose for the sake of contraction that $({\cal P}, \prec ,c)$ is not complete. Then there is an L-shape $L^* \in {\cal L}$ which is not assigned to any pillar; let $S=(b^-,b^+)$ be the segment containing $\ell(L^*)$. Also, let $B$ be the set of bases of pillars in $\cal P$.
	
	We have the following key claim. Note that in both the statement of the claim and its proof, all degrees are with respect to the ordered pillars $({\cal P}, \prec)$.
	
	\begin{claim}
	\label{claim:B*}
		There exists a set $B^*\subset S$ of at most $32\omega^4 -1$ bases so that $p(L^*)\cap B^*$ is non-empty and each segment of $B \cup B^*$ which is contained in $S$ has degree at most $4\omega^2-\omega +1$.
	\end{claim}

\begin{proof}
	Choose bases $b^-=b_0 < b_1 < \dots < b_t < b^+$ such that for each $i\in \{1,\dots ,t\}$, the degree of $(b_{i-1},b_i)$ is at most $4\omega^2 -\omega +1$, and either $d^a(b_{i-1},b_i) \geq \omega+1$ or $d^r(b_{i-1},b_i) \geq 4\omega^2 -2\omega +1$, and subject to this, $t\ge 0$ is maximized. A maximum $t$ exists because the second condition ensures that there is an L-shape with left endpoint in $(b_{i-1}, b_i)$, and $\cal L$ is finite.
	
	We claim that the degree of $(b_{t},b^+)$ is at most $4\omega^2 -\omega +1$. Otherwise we will place another base $b_{t+1}$ between $b_t$ and $b^+$; consider starting with $b_{t+1}$ just slightly past $b_t$ and moving $b_{t+1}$ past a single point in $\{\ell(L):L \in {\cal L}\}\cup \{r(L):L \in {\cal L}\}$ at a time. If $b_{t+1}$ is moved past a left endpoint, then the left-degree does not change and the right-degree goes up by at most $1$. Otherwise, $b_{t+1}$ is moved past some right endpoint to a new location $b_{t+1}'$. Then the right-degree does not change and the left-degree goes up by at most $d^a(b_t, b_{t+1})-d^\ell(b_t, b_{t+1})$ because $N^\ell(b_t, b_{t+1}') \subseteq N^a(b_t,b_{t+1})$. It follows that if $b_{t+1}'$ is the first location where the degree is more than $4\omega^2 -\omega +1$, then the previous location $b_{t+1}$ gives a contradiction to the choice of $t$.
	
	Now, let ${\cal J}_a$ (resp. ${\cal J}_r$) be the set of segments of $B \cup \{b_1, \ldots, b_t\}$ which are contained in $S$ and have additional-degree at least $\omega+1$ (resp. right-degree at least $4\omega^2-2\omega+1$). By definition we have $|{\cal J}_a|+ |{\cal J}_r| \ge t$. By Lemmas~\ref{left} and~\ref{right},\begin{align*}
	|{\cal J}_a|(\omega+1) &\le \sum_{J \in {\cal J}_a}d^a(J) < \omega(|{\cal J}_a|+ 4\omega^2-\omega + \lceil 4\log_2 (\omega) \rceil +6) \textrm{ and}\\
	|{\cal J}_r|(4\omega^2-2\omega+1) &= \sum_{J \in {\cal J}_r}d^r(J) < (4\omega^2-2\omega)(|{\cal J}_r|+ 4\omega^2-\omega +\lceil 4\log_2 (\omega) \rceil +6).
	\end{align*}
	As a consequence, \begin{align*}
	|{\cal J}_a| &< \omega\left(4\omega^2-\omega +\lceil 4\log_2 (\omega) \rceil +6\right) \textrm{ and}\\
	|{\cal J}_r| &< (4\omega^2-2\omega)(4\omega^2-\omega +\lceil 4\log_2 (\omega) \rceil +6).
	\end{align*}
	
	As both sides of the inequalities are integers and $t \leq |{\cal J}_a|+|{\cal J}_r|$, summing the right sides and then using the fact that $\omega \geq 2$ we obtain\begin{align*}
	t+2 &\leq (4\omega^2-\omega)(4\omega^2-\omega +\lceil 4\log_2 (\omega) \rceil +6)\\
	&\leq 4\omega^2(4\omega^2+3\omega +6)\\
	&\leq 32\omega^4.
	\end{align*}
	Finally, let $b\in p(L^*)$; then $B^*=\{b,b_1,b_2,\dots ,b_t\}$ provides the desired subset of~$S$.
\end{proof}

Now fix $B^* \subset S$ as in the claim, with $|B^*|\leq 32\omega^4-1$. Observe that, by the choice of $({\cal P}, \prec, c)$, there are $\lceil 4\log_2 (\omega) \rceil+5=\lceil \log_2 (32\omega^4) \rceil$ colors available which are not used to color any pillar in $N^\ell(S)\cup N^r(S)$ (where neighborhoods are with respect to $({\cal P}, \prec)$). So by Lemma~\ref{divide-and-conquer} (the divide-and-conquer lemma), there is a total ordering $\prec^*$ and a coloring $c^*$ of $B^*$ so that $c^*$ uses only these available colors and, for all $b, b' \in B^*$ with $b <b'$ and $c^*(b)=c^*(b')$, there exists $b^* \in (b, b')\cap B^*$ so that $b^* \prec^* b$ and $b^* \prec^* b'$. 

Now, beginning with $({\cal P}, \prec, c)$, we place additional pillars ${\cal P}^*$ at bases in $B^*$ according to the ordering $\prec^*$ and color them according to $c^*$ in order to obtain a new pillar assignment ${\cal A}^*=({\cal P} \cup {\cal P}^*, \prec \cup \prec^*, c \cup c^*)$ (note that we also view $c^*$ as a coloring of ${\cal P}^*$ and that each pillar in ${\cal P}$ precedes each pillar in ${\cal P}^*$). We will show that ${\cal A}^*$ is a pillar assignment which contradicts the choice of $({\cal P}, \prec, c)$. 

\begin{claim}
\label{claim:uncoloured}
If $L \in {\cal L}$ is colored by $\phi_{{\cal A}^*}$ but not $\phi_{({\cal P}, \prec, c)}$, then $p(L) \subset S$.
\end{claim}
\begin{proof}
By Lemma~\ref{lem:assigned} we know that $p(L)$ is contained in a segment of $B$. So if $p(L)$ is not contained in $S = (b^-, b^+)$, then $\ell(L)<b^-$ and $L$ would have been colored by either the pillar with base $b^-$ or an earlier pillar, a contradiction.
\end{proof}

The next two claims will complete the proof.
\begin{claim}
\label{claim:pillar}
The tuple ${\cal A}^*$ is a pillar assignment so that $\phi_{{\cal A}^*}$ colors more L-shapes than $\phi_{({\cal P}, \prec, c)}$.
\end{claim}
\begin{proof}
The latter statement will be easy as $L^*$ is not colored by $\phi_{({\cal P}, \prec, c)}$ but is colored by $\phi_{{\cal A}^*}$ (since $p(L^*)\cap B^*$ is non-empty). So it just remains to show that ${\cal A}^*$ is a pillar assignment. Going for a contradiction, suppose that $L_1$ and $L_2$ are intersecting L-shapes assigned respectively to pillars $P_1$ and $P_2$ of the same color. 

Then one of the L-shapes, say $L_1$, is not colored by $\phi_{({\cal P}, \prec, c)}$. So $p(L_1) \subset S$ by Claim~\ref{claim:uncoloured}. Then by the choice of $c^*$, the other L-shape $L_2$ must not be colored by $\phi_{({\cal P}, \prec, c)}$ either. Thus, there must be a base $b^* \in B^*$ which is between the bases of $P_1$ and $P_2$ and earlier than both according to $\prec^*$. But then either $L_1$ or $L_2$ would have been colored by the pillar with base $b^*$ or an earlier pillar, a contradiction.
\end{proof}

\begin{claim}
The degree of each segment of ${\cal A}^*$ with respect to $({\cal P} \cup {\cal P}^*, \prec \cup \prec^*)$ is at most $4\omega^2 -\omega +\lceil 4\log_2 (\omega) \rceil +6$.
\end{claim}
\begin{proof}
Let $S^*$ be a segment of ${\cal A}^*$. If $S^*$ is disjoint from $S$ then its degree does not change, so we may assume that $S^*\subset S$. Now consider a pillar $P$ with base $b$ which contributes to the degree of $S^*$ in $({\cal P} \cup {\cal P}^*, \prec \cup \prec^*)$. If $b\in B$ then $P$ contributes to the degree of $S^*$ in $({\cal P}, \prec)$ as well; by Claim~\ref{claim:B*} there are at most $4\omega^2-\omega+1$ such pillars $P$. If $b\in B^*$, then there cannot be any earlier base in $\prec^*$ which is between $b$ and the endpoint of $S^*$ closer to $b$; so so there is at most one such pillar $P$ for each color of $c^*$. In total the new degree is at most\[(4\omega^2 -\omega +1) + (\lceil 4\log_2(\omega) \rceil+5)  = 4\omega^2 -\omega +\lceil 4\log_2 (\omega) \rceil +6,\]which completes the proof of the claim.
\end{proof}

By the above two claims, ${\cal A}^*$ is a pillar assignment which contradicts the choice of $({\cal P}, \prec, c)$. This therefore completes the proof of Proposition~\ref{complete pillar assignment}.
\end{proof}

Theorem~1 now quickly follows.

\begin{proof}[Proof of Theorem~1]
We may assume that $\omega \geq 2$ as otherwise the theorem is trivially true. By combining Proposition~\ref{complete pillar assignment} with Lemmas~\ref{flattened} and~\ref{colouring L with pillars}, we obtain that the grounded L-graph has chromatic number at most $(2\omega^2)(4\omega^2 -\omega +2\lceil 4\log_2 (\omega) \rceil +11)$. So the theorem holds if $\omega =2$. If $\omega \geq 3$ then\begin{align*}
    (2\omega^2)(4\omega^2 -\omega +2\lceil 4\log_2 (\omega) \rceil +11) &\leq (2\omega^2)(4\omega^2 +7\omega +11)\\
    & \leq (2\omega^4)\left(4+\frac{7}{3}+\frac{11}{9} \right)\\
    &\leq 16\omega^4
\end{align*}
and still the theorem holds.
\end{proof}

\section{Separation results}

The goal of this section is to prove the separation results of Theorem \ref{thm:separation}.
For all of our constructions, we would like to force that some objects are attached to the grounding line in some specific 
order.
Such an approach was already used by Cardinal et al.~\cite{CardinalFMTV18}, 
who observed that a simple family of grounded strings representing a cycle has a fairly rigid structure, 
which yields a specific order of the base points of strings from this family and the base points of strings representing vertices ``appropriately'' adjacent to the cycle.
This approach was formalized in so-called Cycle Lemma (Lemma~6 of \cite{CardinalFMTV18}), which was proved for various subclasses of outer-1-string graphs.
We also note that the separation results obtained by Jel{\'{\i}}nek and T{\"{o}}pfer~\cite{JelTop19} were based on similar ideas.
In our work, we use a weaker version of Cycle Lemma for outer-1-string graphs, sufficient for our applications.
Moreover, it turns out that in the class of interval filament graphs, a cycle also has a fairly rigid representation, 
which allows us to control the ranges of the domains representing the vertices from the cycle, 
as well as vertices ``appropriately'' adjacent to the cycle.
The above ideas are brought to life in so called Cycle Lemma for Interval Filament Graphs. 

Before we prove Cycle Lemma for Interval Filament Graphs, we need some 
preparation first.
Suppose $G$ is an intersection graph of a family of interval filaments $\mathcal{F}$. 
Given a vertex $v$ in $G$, by $r_{\mathcal{F}}(v)$ we denote the interval filament in $\mathcal{F}$ representing $v$ 
and by $\dom_{\mathcal{F}}(v)$ we denote the domain of $r_{\mathcal{F}}(v)$.
Usually we omit the subscript if the family $\mathcal{F}$ is clear from the context.
Given a path $P$ in $G$, we say that $P$ is represented in $\mathcal{F}$ by
a \emph{chain of overlapping interval filaments} if the consecutive vertices on $P$ can be labeled by $v_1,\ldots,v_k$ so as the following holds:
\begin{itemize}
 \item for every $i \in [k-2]$ we have $\dom(v_i) < \dom(v_{i+2})$, 
 \item for every $i \in [k-1]$ the intervals $\dom(v_i)$ and $\dom(v_{i+1})$ overlap.
\end{itemize}
See Figure~\ref{fig:cycle_lemma_for_interval_filaments_1} for an illustration.
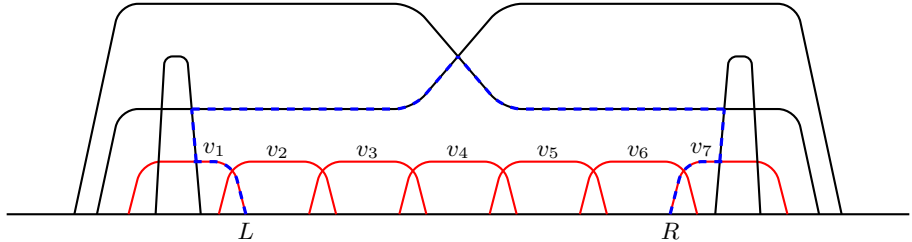
\begin{figure}[h]
\centering
\begin{tikzpicture}[xscale=0.6,yscale=0.7,>=latex]
\draw[thick] (-10,0)--(10,0);

\draw[red,thick,rounded corners=7] (-7.3,0)--(-7,1)--(-5,1)--(-4.7,0);
\draw[red,thick,rounded corners=7] (-5.3,0)--(-5,1)--(-3,1)--(-2.7,0);
\draw[red,thick,rounded corners=7] (-3.3,0)--(-3,1)--(-1,1)--(-0.7,0);
\draw[red,thick,rounded corners=7] (-1.3,0)--(-1,1)--(1,1)--(1.3,0);
\draw[red,thick,rounded corners=7] (0.7,0)--(1,1)--(3,1)--(3.3,0);
\draw[red,thick,rounded corners=7] (2.7,0)--(3,1)--(5,1)--(5.3,0);
\draw[red,thick,rounded corners=7] (4.7,0)--(5,1)--(7,1)--(7.3,0);

\draw[thick,rounded corners=3] (-6.7,0)--(-6.5,3)--(-6,3)--(-5.7,0);
\draw[thick,rounded corners=3] (5.7,0)--(6.0,3)--(6.5,3)--(6.7,0);

\draw[thick,rounded corners=7] (-8,0)--(-7.5,2)--(-1,2)--(1,4)--(7.5,4)--(8.5,0);

\draw[thick,rounded corners=7] (8,0)--(7.5,2)--(1,2)--(-1,4)--(-7.5,4)--(-8.5,0);


\draw[very thick, blue, dashed, rounded corners=7] (-4.7,0) -- (-5,1) --(-6,1);
\draw[very thick, blue, dashed] (-5.8,1) -- (-5.9,2);
\draw[very thick, blue, dashed, rounded corners=7] (-5.9,2)--(-1,2)--(0,3);
\draw[very thick, blue, dashed, rounded corners=7] (5.9,2)--(1,2)--(0,3);
\draw[very thick, blue, dashed] (5.8,1) -- (5.9,2);
\draw[very thick, blue, dashed, rounded corners=7] (4.7,0) -- (5,1) --(6,1);


\begin{scriptsize}
\node[above] at (-5.4,0.95) {$v_1$};
\node[above] at (-4,0.9) {$v_2$};
\node[above] at (-2,0.9) {$v_3$};
\node[above] at (0,0.9) {$v_4$};
\node[above] at (2,0.9) {$v_5$};
\node[above] at (4,0.9) {$v_6$};
\node[above] at (5.4,0.95) {$v_7$};

\node[below] at (-4.7,0) {$L$};
\node[below] at (4.7,0) {$R$};
\end{scriptsize}
\end{tikzpicture}
\caption{A path $P=v_1\ldots v_7$ in $C$ (depicted in red) is represented by a chain of overlapping interval filaments.
A curve $J$ is drawn with a dashed blue line.}
\label{fig:cycle_lemma_for_interval_filaments_1}
\end{figure}

\begin{lemma}[Cycle Lemma for Interval Filament Graphs]
Let $G$ be an intersection graph of a family of interval filaments $\mathcal{F}$.
Suppose $C \subset V(G)$ induces a cycle of size~$n$ in $G$ for some $n \geq 7$.
Then, there is a path $P$ of size $n-4$ in $C$ such that $P$ is
represented in $\mathcal{F}$ by a chain of overlapping interval filaments (see Figure~\ref{fig:cycle_lemma_for_interval_filaments_1}).

Suppose $k = n-4$ and $v_1 \ldots v_k$ are the consecutive vertices of $P$ enumerated so that $\dom(v_i)$ is to the left of $\dom(v_j)$ for $i < j$. 
If $v$ is a vertex in $V \setminus C$ such that
$v$ is adjacent to the vertices $v_i,v_j$ for some $i < j$ in $[2,k-1]$ and all the neighbors of $v$ from $C$ are contained in the set $\{v_{i},v_{i+1},\ldots,v_j\}$,
then the left endpoint of $\dom(v)$ is contained in $\dom(v_i)$ and the right endpoint of $\dom(v)$ is contained in $\dom(v_j)$.
\end{lemma}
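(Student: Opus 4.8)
The plan is to exploit the rigidity of a cycle's representation by interval filaments, working with the interval structure of the domains rather than the filaments themselves. First I would set up the key combinatorial fact: if $uv$ is an edge of $G$, then $\dom(u)$ and $\dom(v)$ cannot be disjoint (an interval filament sits above its domain, so intersecting filaments force overlapping or nested domains). Thus along the cycle $C$ the domains of consecutive vertices always meet. I would then order the vertices of $C$ by, say, the left endpoints of their domains, and analyze how the nesting/overlap pattern can ``wrap around'' the cycle. The crucial observation is that a long antichain of domains ordered left to right along the cycle behaves like a chain of overlapping intervals, and that only a bounded number of ``exceptional'' vertices — those whose domain is nested inside another, or that sit at the wrap-around points — can disrupt this. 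The bound $n-4$ (losing exactly $4$ vertices) should come from discarding at most two vertices at each of the two places where the cyclic order of the domains ``turns around,'' i.e.\ where the sequence of left endpoints along the cycle reverses direction.

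More concretely, here are the steps in order. (1) Prove the edge-to-domain lemma above, and note that if $u$ and $v$ are non-adjacent in $C$ but $\dom(u) \subseteq \dom(v)$, then the filament $r(v)$ together with its domain separates the plane in a way that forces any $v$--$u$ path in $C$ to use a neighbor of $v$; combined with $C$ being an induced cycle this limits how nesting can occur along $C$. (2) Consider the left endpoints of the domains of the vertices of $C$ in cyclic order; argue that this sequence, read cyclically, is ``unimodal up to bounded error,'' so that after deleting a bounded number (at most $4$) of vertices the remaining domains can be linearly ordered $v_1, \ldots, v_k$ with $v_1 v_2 \cdots v_k$ a subpath of $C$ and with each consecutive pair overlapping and $\dom(v_i) < \dom(v_{i+2})$ — precisely the chain-of-overlapping-filaments condition. (3) For the second half: given $v \notin C$ adjacent to $v_i, v_j$ with $i<j$ in $[2,k-1]$ and all $C$-neighbors of $v$ inside $\{v_i, \ldots, v_j\}$, consider the domains. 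Since $v$ is adjacent to neither $v_{i-1}$ nor $v_{j+1}$, the domain $\dom(v)$ must be disjoint from $\dom(v_{i-1})$ and from $\dom(v_{j+1})$; since it meets $\dom(v_i)$ and $\dom(v_j)$, and since the $\dom(v_t)$ form a left-to-right chain of overlapping intervals, the only possibility is that the left endpoint of $\dom(v)$ lies in $\dom(v_i)$ and the right endpoint in $\dom(v_j)$ — otherwise $\dom(v)$ would either extend past $\dom(v_{i-1})$ (forcing an intersection, since consecutive domains overlap) or fail to reach $\dom(v_j)$.

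The main obstacle I expect is step (2): controlling the cyclic structure of the domains and pinning down exactly why only four vertices need to be removed. The subtlety is that nesting of domains (one contained in another) is allowed in principle, and one has to use that $C$ is an \emph{induced} cycle — so non-consecutive vertices are non-adjacent, hence by step (1) their domains are either disjoint or nested — to rule out complicated interleavings. I would handle this by a careful case analysis on the vertex whose domain has the leftmost left endpoint and the vertex whose domain has the rightmost right endpoint, showing that the cycle decomposes into (at most) two monotone arcs meeting at these two extremal vertices plus at most one extra ``cap'' vertex at each end, accounting for the four discarded vertices; the hypothesis $n \geq 7$ guarantees the surviving path has at least three vertices so the conditions $\dom(v_i) < \dom(v_{i+2})$ are nonvacuous and the argument for the external vertex $v$ in step (3) makes sense.
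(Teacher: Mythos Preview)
Your plan has a real gap in step~(3). You write that since $v$ is not adjacent to $v_{i-1}$, the domain $\dom(v)$ must be disjoint from $\dom(v_{i-1})$; but this is false for interval filaments --- non-adjacent vertices can perfectly well have nested domains. Nothing you have said rules out $\dom(v_{i-1}) \subset \dom(v)$, and once that is allowed the left endpoint of $\dom(v)$ can sit arbitrarily far to the left while $r(v)$ arches over $r(v_{i-1})$ without touching it. The paper handles this not by a local argument at $v_{i-1}$ and $v_{j+1}$ but by a global topological barrier: it takes the right endpoint $L$ of $\dom(v_1)$, the left endpoint $R$ of $\dom(v_k)$, and a curve $J$ from $(L,0)$ to $(R,0)$ running inside the filaments of the \emph{other} arc of $C$ (the one from $v_1$ to $v_k$ that avoids $v_2,\dots,v_{k-1}$). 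Since by hypothesis $v$ has no neighbour on that arc, $r(v)$ cannot cross $J$, which forces $\dom(v)\subset (L,R)$; only with this confinement in hand does the chain structure pin down the endpoints of $\dom(v)$. Your argument uses only non-adjacency to $v_{i-1}$ and $v_{j+1}$ and cannot recover this.

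Step~(2) also diverges from the paper, and your own diagnosis that it is the main obstacle is correct. Rather than tracking a cyclic sequence of left endpoints and looking for two ``turn-around'' points, the paper first proves a clean structural fact: for any three pairwise non-adjacent vertices of $C$, their domains are either pairwise disjoint, or one of them contains the other two (which are themselves disjoint). From this it deduces that some vertex $w$ of $C$ has a domain containing the domain of a non-neighbour; removing $w$ together with its two cycle-neighbours (three vertices) leaves a path $P'$ all of whose domains lie inside $\dom(w)$. The three-vertex lemma then forces consecutive \emph{inner} vertices of $P'$ to have overlapping domains, and a short argument shows that at most one of the two endpoints of $P'$ can fail this --- so one more deletion suffices. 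The count is therefore $3+1=4$, not $2+2$, and the mechanism is containment-driven rather than a left-endpoint unimodality statement. Without the three-independent-vertices lemma (which you do not isolate), it is not clear your unimodality claim can be made precise.
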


\begin{proof}
See Figure~\ref{fig:cycle_lemma_for_interval_filaments_1} for an illustration.
Suppose $a,b,c$ are three independent vertices in $C$.
Consider the intervals $\dom(a), \dom(b), \dom(c)$.
We claim that they are pairwise disjoint or one among them contains the remaining two, which are disjoint.
Suppose otherwise.
Without loss of generality we may assume that $\dom(a)$ is contained in $\dom(b)$ and that either $\dom(b)$ is contained in $\dom(c)$ or $\dom(b)$ and $\dom(c)$
are disjoint.
In any case, note that some vertex on the path in $C$ that joins $a$ and $c$ and avoids $b$ 
is represented by an interval filament that intersects $r(b)$, which can not be the case.
Next, we claim that there are two non-adjacent vertices $w,u$ in $C$ such that $\dom(u)$ is contained in $\dom(w)$.
Assume otherwise.
Then, our first claim asserts that the domains of every three independent vertices in $C$ must be pairwise
disjoint.
In particular, it follows that the domains of every two adjacent vertices from $C$ must overlap.
However, such a set of interval filaments would represent a graph that consists of some number of paths, which can not be the case.
So, suppose $u,w \in C$ are such that $\dom(u)$ is contained in $\dom(w)$.
Let $P'$ be the set that contains all the vertices from~$C$ except from $w$ and its two neighbours, $w'$ and $w''$.
Clearly, $P'$ induces a path in $G$ of size $n-3$.
We denote the consecutive vertices of $P'$ by $w_1,\ldots,w_{n-3}$ such that $w'w_1$ and $w_{n-3}w''$ are edges of $C$.
Since $w$ is adjacent to no vertex of $P'$, the domains of the interval filaments representing the vertices in $P'$ are contained in $\dom(w)$.
Using similar arguments as earlier, we show that the domains of every three independent vertices from $P'$ are pairwise disjoint.
It follows that every two inner vertices in $P'$ that are adjacent must overlap.
However, it could happen that $\dom(w_1) \subset \dom(w_2)$ or $\dom(w_{n-3}) \subset \dom(w_{n-4})$ -- see Figure~\ref{fig:cycle_lemma_for_interval_filaments_2}.
Note that we can not have $\dom(w_1) \subset \dom(w_2)$ and $\dom(w_{n-3}) \subset \dom(w_{n-4})$ at the same time.
Indeed, assuming otherwise, the domains of $w'$ and $w''$ would contain the interval $\bigcup_{i \in [n-3]} \dom(w_i)$.
It follows that the intervals $\dom(w'')$ and $\dom(w')$ are nested.
If $\dom(w'') \subset \dom(w')$ then $r(w_{1})$ intersects $r(w'')$ as $r(w_{1})$ intersects $r(w')$ and $r(w'')$ and $r(w')$ are disjoint, 
which can not be the case.
See Figure~\ref{fig:cycle_lemma_for_interval_filaments_2} for an illustration.
We obtain an analogous contradiction if $\dom(w') \subset \dom(w'')$.
Consequently, we note that $P' - \{w_1\}$ or $P' - \{w_{n-3}\}$ is a path of size $n-4$ represented by a chain of overlapping interval filaments.  

\begin{figure}[h]
\centering
\begin{tikzpicture}[xscale=0.6,yscale=0.7,>=latex]
\draw[thick] (-10,0)--(10,0);

\draw[thick,rounded corners=7] (-7.3,0)--(-7,1)--(-5,1)--(-4.7,0);
\draw[thick,rounded corners=7] (-5.3,0)--(-5,1)--(-3,1)--(-2.7,0);
\draw[thick,rounded corners=7,dashed] (-3.3,0)--(-3,1)--(-1,1)--(-0.7,0);
\draw[thick,rounded corners=7,dashed] (-1.3,0)--(-1,1)--(1,1)--(1.3,0);
\draw[thick,rounded corners=7,dashed] (0.7,0)--(1,1)--(3,1)--(3.3,0);
\draw[thick,rounded corners=7] (2.7,0)--(3,1)--(5,1)--(5.3,0);
\draw[thick,rounded corners=7] (4.7,0)--(5,1)--(7,1)--(7.3,0);

\draw[thick,rounded corners=3] (-6.7,0)--(-6.5,3)--(-6,3)--(-5.7,0);
\draw[thick,rounded corners=3] (5.7,0)--(6.0,3)--(6.5,3)--(6.7,0);

\draw[thick,rounded corners=7] (-8.5,0)--(-7.5,2.5)--(-1,2.5)--(1,4)--(7.5,4)--(8.5,0);
\draw[thick,rounded corners=7, dashed] (-7.9,0)--(-7,1.75)--(-1,1.75)--(1,2.5)--(7.2,2.5)--(7.9,0);



\begin{scriptsize}
\node[above] at (-5.4,0.95) {$w_2$};
\node[above] at (-4,0.9) {$w_3$};
\node[above] at (4,0.9) {$w_{n-5}$};
\node[below] at (7.8,0) {$w_{n-4}$};

\node[above] at (-6.25,2.9) {$w_1$};
\node[above] at (6.25,2.9) {$w_{n-3}$};

\node[above] at (3.5,3.95) {$w'$};
\node[above] at (3.5,2.45) {$w''$};

\end{scriptsize}
\end{tikzpicture}
\caption{}
\label{fig:cycle_lemma_for_interval_filaments_2}
\end{figure}

To prove the second part of the claim, let $L$ and $R$ denote the right endpoint of $\dom(v_1)$ and the left endpoint of 
$\dom(v_k)$, respectively.
Note that $L < R$ as $k \geq 3$.
Let $J$ be any curve from $L$ to $R$ that is contained in the interval filaments representing the path $P''$ of $C$ from $v_1$ to $v_k$ avoiding $v_{2},\ldots,v_{k-1}$.
See Figure~\ref{fig:cycle_lemma_for_interval_filaments_1} for an illustration.
Note that $r(v)$ can not intersect $J$ as $v$ is not adjacent to the vertices of $P''$.
In particular, it follows that $\dom(v)$ is contained in the interval $(L,R)$.
Now the thesis follows easily from the fact that the vertices $v_1\ldots v_k$ are represented by the chain of overlapping intervals. 
\end{proof}

Suppose that $G$ is the intersection graph of a simple family of grounded curves $\mathcal{F}$.
Given a vertex $v \in G$, by $s_{\mathcal{F}}(v)$ we denote the string representing the vertex $v$ and by $b_{\mathcal{F}}(v)$
we denote the base of $s_\mathcal{F}(v)$.
Again, we omit the subscript if $\mathcal{F}$ is clear from the context.
The next lemma is a weaker version of Cycle Lemma proved in \cite{CardinalFMTV18}.
For the sake of completeness, we enclose its proof, which follows the same reasoning as the proof in~\cite{CardinalFMTV18}.

\begin{lemma}[Cycle Lemma for Outer-1-string Graphs]
Suppose $G$ is the intersection graph of a simple family of grounded curves $\mathcal{F}$.
Let $I \subset C \subset V(G)$ be such that $G[C]$ induces a cycle in $G$ and $I$ is an independent set in $C$. 
Then the cyclic order of the vertices from $I$ given by their occurrence on $C$ coincides, up to reversal,
to the cyclic order of their base points on the grounding line, where here we conveniently assume that the leftmost base point follows
the rightmost base point.
\end{lemma}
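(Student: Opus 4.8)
The plan is to realize the cycle $C$ as a Jordan curve $\gamma$ lying strictly above the grounding line, to view the grounding line (closed up with a point at infinity) as the second boundary circle of the annulus sitting between $\gamma$ and infinity, and then to read both cyclic orders off a family of pairwise disjoint arcs crossing that annulus.

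First I would reduce to general position: a small perturbation that affects neither the intersection graph nor the left-to-right order of the base points lets us assume that the base points are pairwise distinct, that no three strings pass through a common point, and that all crossings are transversal; in particular every crossing between two strings lies strictly above the grounding line. Write $C = v_0 v_1 \cdots v_{n-1} v_0$ and, for each $i$ (indices mod $n$), let $x_i$ be the unique point of $s(v_i) \cap s(v_{i+1})$, which exists and is unique because $\mathcal{F}$ is simple and $v_i v_{i+1} \in E(G)$. Since $G[C]$ is a cycle, non-consecutive strings of $C$ are disjoint, so the sub-arcs of the $s(v_i)$ lying between $x_{i-1}$ and $x_i$ are internally disjoint and pairwise meet only at a shared crossing; hence their union is a simple closed curve $\gamma$. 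As no crossing, and no interior point of a string, lies on the grounding line, $\gamma$ is contained in the open half-plane $\{y>0\}$ above the grounding line, so the grounding line --- being connected and disjoint from $\gamma$ --- lies in the unbounded face $O$ of $\mathbb{R}^2 \setminus \gamma$. The set $O \cap \{y>0\}$ is then an open annulus whose two boundary circles are $\gamma$ and the grounding line together with a point at infinity.

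Next, for each $v = v_i \in I$ I would single out the \emph{pendant} $\pi_v$: the sub-arc of $s(v_i)$ from $b(v_i)$ to the first of $x_{i-1}, x_i$ met when walking along $s(v_i)$ from $b(v_i)$; call that crossing $p_v$, a point of $\gamma$. One checks that $\pi_v$ meets $\gamma$ only at $p_v$ and meets the grounding line only at $b(v)$, so $\pi_v$ minus its endpoints lies in the annulus $O \cap \{y>0\}$ and joins its two boundary circles; moreover the pendants $\pi_v$ for $v \in I$ are pairwise disjoint because $I$ is independent. I would then apply the standard fact that pairwise disjoint arcs in a closed annulus, each joining one boundary circle to the other, meet the two boundary circles in the same cyclic order up to reversing one of them (proved, say, by cutting the annulus along one such arc to get a disk in which the remaining arcs are non-crossing chords between two disjoint boundary arcs). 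The cyclic order in which the $\pi_v$ meet the grounding-line circle is exactly the left-to-right order of the base points with the leftmost following the rightmost, so it only remains to match the cyclic order of the attachment points $p_v$ on $\gamma$ with the cyclic order of $I$ along $C$. For this, traversing $\gamma$ in the direction induced by $v_0, v_1, \dots$ meets the crossings in the order $x_0, x_1, \dots, x_{n-1}$, and $p_{v_i} \in \{x_{i-1}, x_i\}$; since $I$ is independent, consecutive members of $I$ along $C$ differ in index by at least $2$, which forces the $p_v$ to appear along $\gamma$ in precisely the cyclic order of the corresponding vertices on $C$. Combining the two matchings yields the lemma.

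The main obstacle is not a computation but pinning down the topology cleanly: verifying that $\gamma$ is genuinely a simple closed curve, that $O \cap \{y>0\}$ really is an annulus with the claimed boundary circles, and the cyclic-order statement for disjoint arcs in an annulus --- together with making sure the general-position reduction legitimately removes the degenerate cases (coincident base points, strings through a common point, tangencies).
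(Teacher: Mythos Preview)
Your argument is correct and is essentially the paper's own proof: both build the Jordan curve $\gamma=J(C)$ from the sub-arcs of the cycle strings between consecutive crossings, then use the pairwise disjoint ``initial parts''/pendants of the vertices of $I$, running from the grounding line up to $\gamma$, to transfer the cyclic order from $\gamma$ to the base points. The paper carries out the annulus step by cutting along the pendants of the leftmost and rightmost base points and observing that the remaining pendants all land on one of the two resulting arcs of $J(C)$, which is exactly the ``cut the annulus to a disk'' argument you sketch; your write-up is slightly more explicit about the topology, but the content is the same.
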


\begin{proof}
Denote the vertices of $I$ by $v_0,\ldots,v_{n-1}$ so that $b(v_i) < b(v_j)$ holds for every $i < j$ in $[n-1]$.
Every string $s(v_i)$ is crossed by two strings $s(v_{i-1})$ and $s(v_{i+1})$ (we take indices modulo $n$):
the part of $s(v_i)$ between $b(v_i)$ and the first intersection is called the \emph{initial part} of $s(v_i)$, denoted by $i(s_i)$, 
and the part between the two intersections is called the \emph{central part} of $s(v_i)$, denoted by $c(v_i)$.
Note that the union of all the central parts of the vertices of $C$ forms a Jordan curve, denoted by $J(C)$.
Note also that the interiors of the initial parts $i(v_i)$ for $i \in [n-1]$ as well as $J(C)$ are pairwise disjoint.
Now, the ends of $i(v_0)$ and $i(v_{n-1})$ on $J(C)$ partition $J(C)$ into two parts, and exactly one of these parts, 
say $J'(C)$, contains all the central parts of the vertices from $I - \{v_0,v_{n-1}\}$.
Indeed, assuming otherwise, some vertex from $I \setminus \{v_0,v_{n-1}\}$ would have its base outside the interval $[b(v_{0}),b(v_{n-1})]$.
Now, we easily observe that when we traverse $J'(C)$ from the endpoint of $i(v_{0})$ to the endpoint of $i(v_{n-1})$, 
we encounter the central parts of the vertices from $I \setminus \{v_{0},v_{n-1}\}$ in the order $v_1,\ldots,v_{n-2}$.
Now, the lemma follows as the central parts of the vertices from $C$ occur in $J(C)$ in the order corresponding to their occurrence on $C$.
\end{proof}

Now we are ready to prove the separation theorem.

\begin{proof}[Proof of Theorem \ref{thm:separation}]
We recall that we need to prove the following:
\begin{enumerate}
 \item~\label{enum:interval_filament_not_polygon_circle} There is an interval filament graph which is not a polygon-circle graph.
 \item~\label{enum:flat_grounded_L_graph_not_interval_filament} There is a (flat) grounded L-graph which is not an interval filament graph.
 \item~\label{enum:monotone_L_graph_not_interval_filament} There is a monotone L-graph which is not an interval filament graph.
 \item~\label{enum:polygon_circle_not_outer_1_string} There is a polygon-circle graph which is not an outer-$1$-string graph.
\end{enumerate}
\noindent First we prove statement \eqref{enum:interval_filament_not_polygon_circle}.
Consider the graph $G$ that consists of: 
\begin{itemize}
 \item a cycle $C$, whose consecutive vertices are denoted by $v^1_0,\ldots,v^1_{12}$, $v^2_0,\ldots,v^2_{12}$,
 \item four vertices $a^1,b^1,a^2,b^2$, where $a^i$ is adjacent to $b^i,v^i_1,v^i_7$, 
 and $b^i$ is adjacent to $a^i,v^i_3,v^i_5$, for $i \in [2]$.
\end{itemize}
Clearly, $G$ is an intersection graph of interval filaments, as shown in Figure~\ref{fig:interval_filament_not_polygon_circle} to the left.
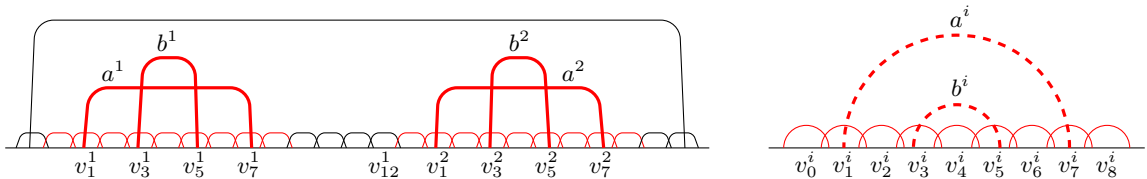
\begin{figure}[h]
\centering
\begin{tikzpicture}[xscale=0.18,yscale=0.1,>=latex]
\begin{scope}
\draw[-] (-26,0) -- (26,0);
\draw[-,white] (-24,-5) -- (24,-5);
\draw[rounded corners=2] (-25.2,0) -- (-24.8,2) --(-23.2,2)--(-22.8,0);
\draw[red,rounded corners=2] (-23.2,0) -- (-22.8,2) --(-21.2,2)--(-20.8,0);
\draw[red,rounded corners=2] (-21.2,0) -- (-20.8,2) --(-19.2,2)--(-18.8,0);
\draw[red,rounded corners=2] (-19.2,0) -- (-18.8,2) --(-17.2,2)--(-16.8,0);
\draw[red,rounded corners=2] (-17.2,0) -- (-16.8,2) --(-15.2,2)--(-14.8,0);
\draw[red,rounded corners=2] (-15.2,0) -- (-14.8,2) --(-13.2,2)--(-12.8,0);
\draw[red,rounded corners=2] (-13.2,0) -- (-12.8,2) --(-11.2,2)--(-10.8,0);
\draw[red,rounded corners=2] (-11.2,0) -- (-10.8,2) --(-9.2,2)--(-8.8,0);
\draw[red,rounded corners=2] (-9.2,0) -- (-8.8,2) --(-7.2,2)--(-6.8,0);
\draw[red,rounded corners=2] (-7.2,0) -- (-6.8,2) --(-5.2,2)--(-4.8,0);
\draw[rounded corners=2] (-5.2,0) -- (-4.8,2) --(-3.2,2)--(-2.8,0);
\draw[rounded corners=2] (-3.2,0) -- (-2.8,2) --(-1.2,2)--(-0.8,0);
\draw[rounded corners=2] (-1.2,0) -- (-0.8,2) --(0.8,2)--(1.2,0);
\draw[rounded corners=2] (0.8,0) -- (1.2,2) --(2.8,2)--(3.2,0);
\draw[red,rounded corners=2] (2.8,0) -- (3.2,2) --(4.8,2)--(5.2,0);
\draw[red,rounded corners=2] (4.8,0) -- (5.2,2) --(6.8,2)--(7.2,0);
\draw[red,rounded corners=2] (6.8,0) -- (7.2,2) --(8.8,2)--(9.2,0);
\draw[red,rounded corners=2] (8.8,0) -- (9.2,2) --(10.8,2)--(11.2,0);
\draw[red,rounded corners=2] (10.8,0) -- (11.2,2) --(12.8,2)--(13.2,0);
\draw[red,rounded corners=2] (12.8,0) -- (13.2,2) --(14.8,2)--(15.2,0);
\draw[red,rounded corners=2] (14.8,0) -- (15.2,2) --(16.8,2)--(17.2,0);
\draw[red,rounded corners=2] (16.8,0) -- (17.2,2) --(18.8,2)--(19.2,0);
\draw[red,rounded corners=2] (18.8,0) -- (19.2,2) --(20.8,2)--(21.2,0);
\draw[rounded corners=2] (20.8,0) -- (21.2,2) --(22.8,2)--(23.2,0);
\draw[rounded corners=2] (22.8,0) -- (23.2,2) --(24.8,2)--(25.2,0);

\draw[rounded corners=7] (-24.2,0) -- (-23.8,17) --(23.8,17)--(24.2,0);

\draw[red,very thick,rounded corners=7] (-20.2,0) -- (-19.8,8) --(-8,8)--(-7.8,0);
\draw[red,very thick,rounded corners=7] (-16.2,0) -- (-15.8,12) --(-12,12)--(-11.8,0);

\draw[red,very thick,rounded corners=7] (18.2,0) -- (17.8,8) --(6,8)--(5.8,0);
\draw[red,very thick,rounded corners=7] (14.2,0) -- (13.8,12) --(10,12)--(9.8,0);

\begin{scriptsize}

\node[above] at (-18,7.7) {$a^1$};
\node[above] at (-14,11.7) {$b^1$};

\node[above] at (16,7.7) {$a^2$};
\node[above] at (12,11.7) {$b^2$};

\node at (-20,-2.2) {$v^1_1$};
\node at (-16.0,-2.2) {$v^1_3$};
\node at (-12,-2.2) {$v^1_5$};
\node at (-8,-2.2) {$v^1_7$};
\node at (2,-2.2) {$v^1_{12}$};

\node at (6,-2.2) {$v^2_1$};
\node at (10.0,-2.2) {$v^2_3$};
\node at (14,-2.2) {$v^2_5$};
\node at (18,-2.2) {$v^2_7$};

\end{scriptsize}
\end{scope}
\end{tikzpicture}
\hspace{0.5cm}
\begin{tikzpicture}[xscale=0.25,yscale=0.25,>=latex]
\draw[-] (-10,0) -- (10,0);
\draw[-,white] (-10,-2) -- (10,-2);
\draw[red] ([shift=(0:1.2cm)]-8,0) arc (0:180:1.2cm);
\draw[red] ([shift=(0:1.2cm)]-6,0) arc (0:180:1.2cm);
\draw[red] ([shift=(0:1.2cm)]-4,0) arc (0:180:1.2cm);
\draw[red] ([shift=(0:1.2cm)]-2,0) arc (0:180:1.2cm);
\draw[red] ([shift=(0:1.2cm)]-0,0) arc (0:180:1.2cm);
\draw[red] ([shift=(0:1.2cm)]2,0) arc (0:180:1.2cm);
\draw[red] ([shift=(0:1.2cm)]4,0) arc (0:180:1.2cm);
\draw[red] ([shift=(0:1.2cm)]6,0) arc (0:180:1.2cm);
\draw[red] ([shift=(0:1.2cm)]8,0) arc (0:180:1.2cm);

\draw[red,very thick, dashed] ([shift=(0:6cm)]0,0) arc (0:180:6cm);
\draw[red,very thick, dashed] ([shift=(0:2.3cm)]0,0) arc (0:180:2.3cm);

\begin{scriptsize}
\node[below] at (-8,0.25) {$v^i_0$};
\node[below] at (-6.0,0.25) {$v^i_1$};
\node[below] at (-4.0,0.25) {$v^i_2$};
\node[below] at (-2.0,0.25) {$v^i_3$};
\node[below] at (0,0.25) {$v^i_4$};
\node[below] at (2,0.25) {$v^i_5$};
\node[below] at (4,0.25) {$v^i_6$};
\node[below] at (6,0.25) {$v^i_7$};
\node[below] at (8,0.25) {$v^i_8$};

\node[above] at (0.2,6.0) {$a^i$};
\node[above] at (0.2,2.3) {$b^i$};

\end{scriptsize}
\end{tikzpicture}

\caption{An interval filament graph which is not a polygon-circle graph.}
\label{fig:interval_filament_not_polygon_circle}
\end{figure}

We will show that $G$ is not a polygon-circle graph.
Suppose $G$ has an intersection model in the set of circle filaments. 
Due to Cycle Lemma for Interval Filament Graphs, there is $i \in [2]$ such that
the path $v^i_0, \ldots, v^i_{8}$ is represented by a chain of overlapping
circle filaments -- see Figure~\ref{fig:interval_filament_not_polygon_circle} to the right.
Now, note that $\dom(b^i)$ is contained in the set $\dom(v^i_3) \cup \dom(v^i_4) \cup \dom(v^i_5)$.
However, the circle filament representing $a^i$ is disjoint with the circle filaments representing $v^i_3, v^i_4, v^i_5$.
It proves that the circle filaments representing $a^i$ and $b^i$ can not intersect, which is a contradiction.

Now we prove statement \eqref{enum:flat_grounded_L_graph_not_interval_filament}.
For an integer $n\geq 3$, consider the graph $G_n$ that consists of:
\begin{itemize}
 \item a cycle $C$ of size $2n$, whose consecutive vertices are denoted by $v_0$, $v_{\{0,1\}}$, $v_{1}$, $v_{\{1,2\}}$, $\ldots$, 
 $v_{n-1}$, $v_{\{n-1,0\}}$,
 \item a vertex $v_{\{i,i+1,i+2\}}$ defined for every $i \in [n-1]$, adjacent to the vertices $v_i,v_{i+1},v_{i+2}$ (we take indices modulo~$n$). 
 Note that $v_{\{0,1,2\}}$ is not defined.
\end{itemize} 
Note that $G_n$ is the intersection graph of a flat L-collection -- see Figure~\ref{fig:flat_grounded_L_graph_not_inteval_filament} for an illustration of $G_5$. 
\begin{figure}[h]
\centering
\begin{tikzpicture}[xscale=0.31,yscale=0.15,>=latex]
\draw[thick] (-16,0)--(10,0);
\draw[thick,white] (-16,-3)--(10,-3);

\draw[red] (0,0)--(0,6)--(7,6);
\draw[red] (-3,0)--(-3,9)--(1,9);
\draw[red] (-6,0)--(-6,12)--(3,12);
\draw[red] (-9,0)--(-9,15)--(5,15);
\draw[red] (-12,0)--(-12,18)--(7,18);

\draw[red] (-4,0)--(-4,1)--(0.5,1);
\draw[red] (-7,0)--(-7,4)--(-2.5,4);
\draw[red] (-10,0)--(-10,7)--(-5.5,7);
\draw[red] (-13,0)--(-13,10)--(-8.5,10);

\draw[very thick] (-8,0)--(-8,5)--(0.5,5);
\draw[very thick] (-11,0)--(-11,8)--(-2.5,8);
\draw[very thick] (-14,0)--(-14,11)--(-5.5,11);

\draw[very thick] (4,0)--(4,20)--(8,20);
\draw[red] (6,0)--(6,19)--(9,19);

\begin{scriptsize}
\node[below] at (0,0) {$v_1$};
\node[below] at (-3,0) {$v_0$};
\node[below] at (-6,0) {$v_4$};
\node[below] at (-9,0) {$v_3$};
\node[below] at (-12,0) {$v_2$};

\node[below] at (3.75,0) {$v_{\{1,2,3\}}$};
\node[below] at (6.75,0) {$v_{\{1,2\}}$};

\end{scriptsize}
\end{tikzpicture}
\hspace{0.5cm}
\begin{tikzpicture}[xscale=0.16,yscale=0.15,>=latex]
\draw[thick] (-19.5,0)--(19.5,0);
\draw[white] (-19,-3)--(19,-3);

\draw[red,rounded corners=6] (-18.5,0) -- (-18,6) --(-14,6)--(-13.5,0);
\draw[red,rounded corners=6] (-14.5,0) -- (-14,6) --(-10,6)--(-9.5,0);
\draw[red,rounded corners=6] (-10.5,0) -- (-10,6) --(-6,6)--(-5.5,0);
\draw[red,rounded corners=3] (-6.5,0) -- (-4,12) --(-2,12)--(-1.5,0);
\draw[red,rounded corners=6] (-2.5,0) -- (-2,6) --(2,6)--(2.5,0);
\draw[red,rounded corners=3] (1.5,0) -- (2,12) --(4,12)--(6.5,0);
\draw[red,rounded corners=6] (5.5,0) -- (6,6) --(10,6)--(10.5,0);
\draw[red,rounded corners=6] (9.5,0) -- (10,6) --(14,6)--(14.5,0);
\draw[red,rounded corners=6] (13.5,0) -- (14,6) --(18,6)--(18.5,0);

\draw[very thick, dashed, rounded corners=6] (-12,0) -- (-11.5,10) --(3.5,10)--(4,0);
\draw[very thick, dashed, rounded corners=6] (-4,0) -- (-3.5,9) --(11.5,9)--(12,0);

\begin{scriptsize}
\node[above] at (-13,5.6) {$v_{i}$};
\node[above] at (-3,11.6) {$v_{i+1}$};
\node[above] at (3,11.6) {$v_{i+2}$};
\node[above] at (14,5.4) {$v_{i+3}$};

\node[above] at (-10,9.7) {$v_{\{i,i+1,i+2\}}$};
\node[above] at (11,8.7) {$v_{\{i+1,i+2,i+3\}}$};

\end{scriptsize}
\end{tikzpicture}

\caption{To the left: a flat grounded L-graph $G_5$. To the right: a~part of $G_{10}$ as an interval filament graph.}
\label{fig:flat_grounded_L_graph_not_inteval_filament}
\end{figure}
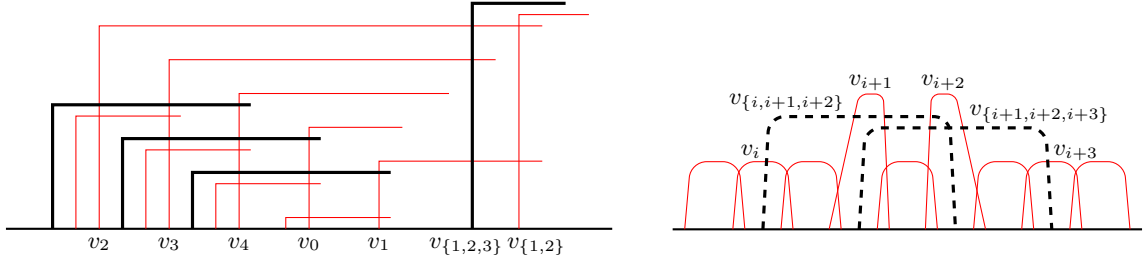

For large enough $n$, the graph $G_n$ is not an interval filament graph.
Suppose for sake of contradiction that $G_n$ has an intersection model in the set of interval filaments for some $n\ge 9$.
First, consider the cycle $C$ in $G_n$. 
Due to Cycle Lemma for Interval Filament Graphs, in any representation of $C$ as an interval filament graph, 
there is a subpath $P$ of size $2n-4 > 13$ such that the vertices of $P$ are represented by
a chain of overlapping interval filaments.
Note that $P$ is long enough so that there exists $i \in [n-1]$ 
such that $v_i, v_{i+1}, v_{i+2}, v_{i+3}$ are the inner vertices of~$P$ and $v_{\{i, i+1, i+2\}}$ and $v_{\{i+1, i+2, i+3\}}$ are vertices of $G_n$.
Then by the the Cycle Lemma for Interval Filament Graphs, the left end point of $\dom(v_{\{i, i+1, i+2\}})$ would need to be contained in $\dom(v_i)$ and its right endpoint in $\dom(v_{i+2})$, 
and similarly the left end point of $\dom(v_{\{i+1, i+2, i+3\}})$ would need to be contained in $\dom(v_{i+1})$ and its right endpoint in $\dom(v_{i+3})$. 
However, then the interval filaments of $v_{\{i, i+1, i+2\}}$ and $v_{\{i+1, i+2, i+3\}}$ would intersect, a contradiction.

To show statement \eqref{enum:monotone_L_graph_not_interval_filament}, consider the graph $G$ that consists of:
\begin{itemize}
 \item a cycle $C$, whose consecutive vertices are denoted by $v^1_0,\ldots,v^1_{10}$, $v^2_0, \ldots, v^2_{10}$,
 \item four vertices $a^1,b^1,a^2,b^2$ such that for every $i \in [2]$ the vertex $a^i$ is adjacent to $b^i,v^i_1,v^i_2$ and 
 the vertex $b^i$ is adjacent to $a^i,v^i_4,v^i_5$.
\end{itemize}
See Figure~\ref{fig:monotone_L_graph_not_interval_filament} that shows $G$ as the intersection graph of monotone L-shapes.

\begin{figure}[h]
\centering
\begin{tikzpicture}[xscale=0.2,yscale=0.1,>=latex]
\draw[thick] (-0,-0)--(-43.5,-43.5);

\draw (-3,-5.5)--(-3,-3)--(-0.5,-3);
\draw[red] (-5,-7.5)--(-5,-5)--(-2.5,-5);

\draw[red] (-7,-9.5)--(-7,-7)--(-4.5,-7);
\draw[very thick,red] (-8,-15)--(-8,-8)--(-6,-8);
\draw[red] (-9,-11.5)--(-9,-9)--(-6.5,-9);

\draw[red] (-11,-13.5)--(-11,-11)--(-8.5,-11);

\draw[red] (-13,-15.5)--(-13,-13)--(-10.5,-13);
\draw[very thick,red] (-14,-16)--(-14,-14)--(-7.5,-14);
\draw[red] (-15,-17.5)--(-15,-15)--(-12.5,-15);

\draw[red] (-17,-19.5)--(-17,-17)--(-14.5,-17);
\draw (-19,-21.5)--(-19,-19)--(-16.5,-19);
\draw (-21,-23.5)--(-21,-21)--(-18.5,-21);
\draw (-23,-25.5)--(-23,-23)--(-20.5,-23);
\draw (-25,-27.5)--(-25,-25)--(-22.5,-25);
\draw[red] (-27,-29.5)--(-27,-27)--(-24.5,-27);

\draw[red] (-29,-31.5)--(-29,-29)--(-26.5,-29);
\draw[very thick,red] (-30,-37)--(-30,-30)--(-28,-30);
\draw[red] (-31,-33.5)--(-31,-31)--(-28.5,-31);

\draw[red] (-33,-35.5)--(-33,-33)--(-30.5,-33);
\draw[red] (-35,-37.5)--(-35,-35)--(-32.5,-35);
\draw[very thick,red] (-36,-38)--(-36,-36)--(-29.5,-36);
\draw[red] (-37,-39.5)--(-37,-37)--(-34.5,-37);
\draw[red] (-39,-41.5)--(-39,-39)--(-36.5,-39);
\draw (-41,-43.5)--(-41,-41)--(-38.5,-41);

\draw (-1,-43)--(-1,-1)--(0,-1);
\draw (-42.5,-43.5)--(-42.5,-42.5)--(-0.5,-42.5);


\begin{scriptsize}
\node[above] at (-37.3,-37.3) {$v^1_1$};
\node[above] at (-35.3,-35.3) {$v^1_2$};
\node[above] at (-31.3,-31.3) {$v^1_4$};
\node[above] at (-29.3,-29.3) {$v^1_5$};

\node[right] at (-30.3,-34) {$b^1$};
\node[below] at (-32.5,-35.5) {$a^1$};

\node[above] at (-15.3,-15.3) {$v^2_1$};
\node[above] at (-13.3,-13.3) {$v^2_2$};
\node[above] at (-9.3,-9.3) {$v^2_4$};
\node[above] at (-7.3,-7.3) {$v^2_5$};

\node[right] at (-8.3,-12) {$b^2$};
\node[below] at (-10.5,-13.5) {$a^2$};

\end{scriptsize}
\end{tikzpicture}
\hspace{0.5cm}
\begin{tikzpicture}[xscale=0.3,yscale=0.6,>=latex]
\draw[thick,white] (-8,-0.2)--(8,-0.2);
\draw[thick] (-8,0)--(8,0);
\draw[red,rounded corners=4] (4.8,0) -- (5,1) --(7,1)--(7.2,0);
\draw[red,rounded corners=4] (2.8,0) -- (3,1) --(5,1)--(5.2,0);
\draw[red,rounded corners=4] (0.8,0) -- (1,1) --(3,1)--(3.2,0);
\draw[red,rounded corners=4] (-1.2,0) -- (-1,1) --(1,1)--(1.2,0);
\draw[red,rounded corners=4] (-3.2,0) -- (-3,1) --(-1,1)--(-0.8,0);
\draw[red,rounded corners=4] (-5.2,0) -- (-5,1) --(-3,1)--(-2.8,0);
\draw[red,rounded corners=4] (-7.2,0) -- (-7,1) --(-5,1)--(-4.8,0);

\draw[red,very thick,rounded corners=4, dashed] (2.2,0) -- (2.5,3) --(3.5,3)--(3.8,0);
\draw[red,very thick,rounded corners=4, dashed] (-3.8,0) -- (-3.5,3) --(-2.5,3)--(-2.2,0);

\begin{scriptsize}
\node[above] at (-2.8,2.9) {$a^i$};
\node[above] at (3.2,2.9) {$b^i$};
\node[above] at (-6,0.9) {$v^i_0$};
\node[above] at (-4.4,0.9) {$v^i_1$};
\node[above] at (-1.55,0.9) {$v^i_2$};
\node[above] at (0,0.9) {$v^i_3$};
\node[above] at (1.6,0.9) {$v^i_4$};
\node[above] at (4.45,0.9) {$v^i_5$};
\node[above] at (6,0.9) {$v^i_6$};
\end{scriptsize}
\end{tikzpicture}

\caption{A monotone $L$-graph which is not an interval filament graph.}
\label{fig:monotone_L_graph_not_interval_filament}
\end{figure}
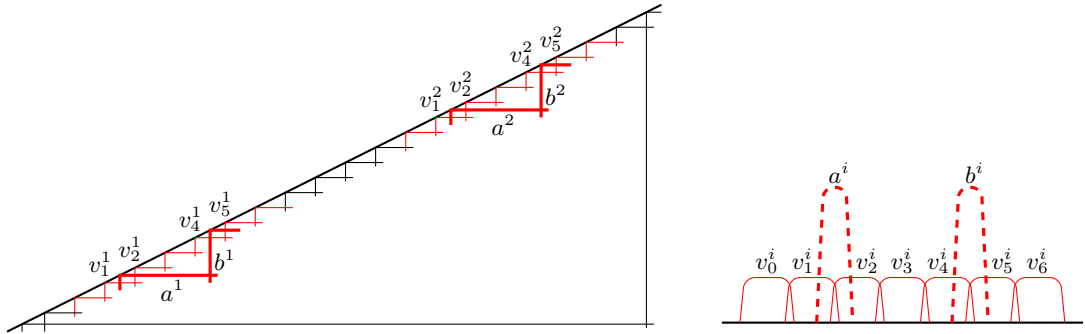

We will show that $G$ is not an interval filament graph.
Suppose otherwise that $G$ has an intersection model in the set of interval filaments.
Due to Cycle Lemma for Interval Filament Graphs, we deduce that there 
is $i \in [2]$ such that the path $v^i_0,\ldots, v^i_6$
is represented by a chain of overlapping interval filaments -- see Figure~\ref{fig:monotone_L_graph_not_interval_filament} to the right.
Again, by the same lemma we imply that $\dom(a^i)$ is contained in $\dom(v^i_1) \cup \dom(v^i_2)$ and $\dom(b^i)$ is contained in $\dom(v^i_4) \cup \dom(v^i_5)$.
However, since these sets are disjoint, the interval filaments representing $a^i$ and $b^i$ are disjoint as well, 
which is a contradiction.

To prove statement \eqref{enum:polygon_circle_not_outer_1_string}, consider the graph $G$ that consists of:
\begin{itemize}
 \item a cycle $C$, whose consecutive vertices are denoted by $v^0_0,\ldots,v^0_5$, $v^1_0,\ldots,v^1_5$, $\ldots$,
 $v^4_0,\ldots,v^4_5$,
 \item two adjacent vertices $a$ and $b$ such that $a$ is adjacent to $v^i_0,v^i_4$ for every $i \in [0,4]$
 and $b$ is adjacent to $v^i_2$ for every $i \in [0,4]$.
\end{itemize}
Note that $G$ is a polygon-circle graph -- see Figure~\ref{fig:polygon_circle_not_outer_1_string} for an illustration of $G$.
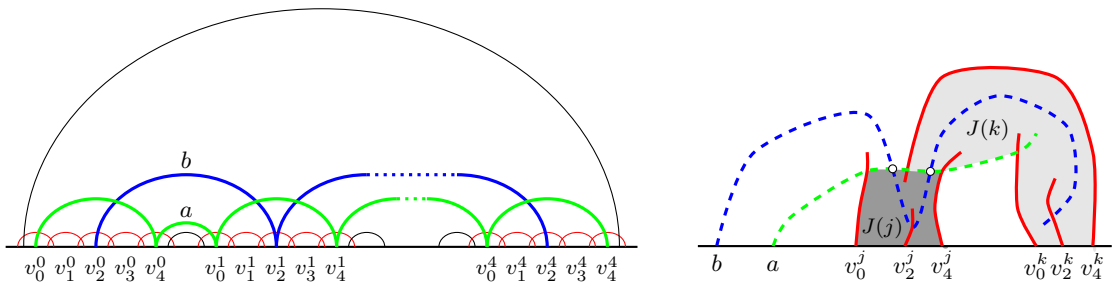
\begin{figure}[h]
\centering
\begin{tikzpicture}[xscale=0.2,yscale=0.16,>=latex]
\begin{scope}
\draw[-,thick] (-24,0) -- (18,0);
\draw[-,white] (-24,-4) -- (18,-4);
\draw[red] ([shift=(0:1.2cm)]-22,0) arc (0:180:1.2cm);
\draw[red] ([shift=(0:1.2cm)]-20,0) arc (0:180:1.2cm);
\draw[red]([shift=(0:1.2cm)]-18,0) arc (0:180:1.2cm);
\draw[red] ([shift=(0:1.2cm)]-16,0) arc (0:180:1.2cm);
\draw[red] ([shift=(0:1.2cm)]-14,0) arc (0:180:1.2cm);
\draw ([shift=(0:1.2cm)]-12,0) arc (0:180:1.2cm);

\draw[red] ([shift=(0:1.2cm)]-10,0) arc (0:180:1.2cm);
\draw[red] ([shift=(0:1.2cm)]-8,0) arc (0:180:1.2cm);
\draw[red] ([shift=(0:1.2cm)]-6,0) arc (0:180:1.2cm);
\draw[red] ([shift=(0:1.2cm)]-4,0) arc (0:180:1.2cm);
\draw[red] ([shift=(0:1.2cm)]-2,0) arc (0:180:1.2cm);
\draw ([shift=(0:1.2cm)]0,0) arc (0:180:1.2cm);

\draw ([shift=(0:1.2cm)]6,0) arc (0:180:1.2cm);
\draw[red] ([shift=(0:1.2cm)]8,0) arc (0:180:1.2cm);
\draw[red] ([shift=(0:1.2cm)]10,0) arc (0:180:1.2cm);
\draw[red] ([shift=(0:1.2cm)]12,0) arc (0:180:1.2cm);
\draw[red] ([shift=(0:1.2cm)]14,0) arc (0:180:1.2cm);
\draw[red] ([shift=(0:1.2cm)]16,0) arc (0:180:1.2cm);

\draw ([shift=(0:19.8cm)]-3,0) arc (0:180:19.8cm);

\draw[very thick, blue] ([shift=(0:6cm)]-12,0) arc (0:180:6cm);
\draw[very thick, blue] ([shift=(90:6cm)]0,0) arc (90:180:6cm);
\draw[very thick, dotted, blue] (0,6)--(6,6);
\draw[very thick, blue] ([shift=(0:6cm)]6,0) arc (0:90:6cm);

\draw[very thick, green] ([shift=(0:4cm)]-18,0) arc (0:180:4cm);
\draw[very thick, green] ([shift=(0:2cm)]-12,0) arc (0:180:2cm);
\draw[very thick, green] ([shift=(0:4cm)]-6,0) arc (0:180:4cm);
\draw[very thick, green] ([shift=(90:4cm)]2,0) arc (90:180:4cm);
\draw[very thick, dotted, green] (2,4)--(4,4);
\draw[very thick, green] ([shift=(0:4cm)]4,0) arc (0:90:4cm);
\draw[very thick, green] ([shift=(0:4cm)]12,0) arc (0:180:4cm);

\begin{scriptsize}
\node[above] at (-12,5.8) {$b$};
\node[above] at (-12,1.8) {$a$};
\end{scriptsize}
\begin{tiny}

\node at (-22,-1.8) {$v^0_0$};
\node at (-20.0,-1.8) {$v^0_1$};
\node at (-18.0,-1.8) {$v^0_2$};
\node at (-16.0,-1.8) {$v^0_3$};
\node at (-14.0,-1.8) {$v^0_4$};

\node at (-10,-1.8) {$v^1_0$};
\node at (-8.0,-1.8) {$v^1_1$};
\node at (-6.0,-1.8) {$v^1_2$};
\node at (-4,-1.8) {$v^1_3$};
\node at (-2,-1.8) {$v^1_4$};

\node at (16,-1.8) {$v^4_4$};
\node at (14.0,-1.8) {$v^4_3$};
\node at (12.0,-1.8) {$v^4_2$};
\node at (10.0,-1.8) {$v^4_1$};
\node at (8.0,-1.8) {$v^4_0$};

\end{tiny}
\end{scope}
\end{tikzpicture}
\hspace{0.5cm}
\begin{tikzpicture}[xscale=1,yscale=1,>=latex]
\draw[-,white] (-2.75,-0.65) -- (2.75,-0.65);

\path[name path=g] (-2.75,0)--(2.75,0);

\fill[black!40] plot[smooth,tension=.7] coordinates {(-0.65,0) (0.5,0) (0.5,0) (0.4,0.55) (0.5,1) (0.5,1) (-0.3,1.02) (-0.5,1) (-0.5,1) (-0.6,0.5) (-0.65,0)}--cycle;
\draw[-] (-2.75,0) -- (2.75,0);

\fill[black!10] plot[smooth,tension=.7] coordinates {(1.75,0) (2.5,0) (2.5,0) (2.5,1.25) (2,2.25) (0.5,2.25) (0.05,1.0) (0.05,1.0) (0.5,1) (1.5,1.25) (1.5,1.25) (1.5,0.5) (1.75,0)}--cycle;
\draw[-,thick] (-2.75,0) -- (2.75,0);

\path[name path=vj0] plot[smooth,tension=.7] coordinates {(-0.65,0) (-0.6,0.5) (-0.5,1) (-0.5,1.25)};
\draw[very thick, red] plot[smooth,tension=.7] coordinates {(-0.65,0) (-0.6,0.5) (-0.5,1) (-0.5,1.25)};

\path[name path=vj2, thick, red] plot[smooth,tension=.7] coordinates {(0,0) (0.1,0.25)  (0.1,0.5)};
\draw[very thick, red] plot[smooth,tension=.7] coordinates {(0,0) (0.1,0.25)  (0.1,0.5)};

\path[name path=vj4] plot[smooth,tension=.7] coordinates {(0.5,0) (0.4,0.5)  (0.5,1)  (0.75,1.25)};
\draw[very thick, red] plot[smooth,tension=.7] coordinates {(0.5,0) (0.4,0.5)  (0.5,1)  (0.75,1.25)};

\path[name path=vk0] plot[smooth,tension=.7] coordinates {(1.75,0) (1.5,0.5)  (1.5,1.25)  (1.5,1.27)};
\draw[very thick, red] plot[smooth,tension=.7] coordinates {(1.75,0) (1.5,0.5)  (1.5,1.25)  (1.52,1.5)};

\path[name path=vk2] plot[smooth,tension=.7] coordinates {(2.1,0) (2,0.3)  (1.9,0.6)  (2,0.9)};
\draw[very thick, red] plot[smooth,tension=.7] coordinates {(2.1,0) (2,0.3)  (1.9,0.6)  (2,0.9)};

\path[name path=vk4] plot[smooth,tension=.7] coordinates {(2.5,0) (2.5,1.25)  (2,2.25)  (0.5,2.25) (0,0.85)};
\draw[red,very thick] plot[smooth,tension=.7] coordinates {(2.5,0) (2.5,1.25)  (2,2.25)  (0.5,2.25) (0,0.85)};

\path[name path=a, thick,] plot[smooth,tension=.7] coordinates {(-1.75,0) (-1.5,0.4) (-0.5,1) (0.5,1) (1.5,1.25) (1.75,1.5)};
\draw[green, very thick, dashed] plot[smooth,tension=.7] coordinates {(-1.75,0) (-1.5,0.4) (-0.5,1) (0.5,1) (1.5,1.25) (1.75,1.5)};

\path[name path=b] plot[smooth,tension=.7] coordinates              {(-2.5,0) (-2,1.25) (-0.5,1.75) (0.1,0.25) (0.5,1.5) (1.25,2) (2.15,1.6) (2.25,0.75) (1.8,0.25)};
\draw[blue, very thick, dashed] plot[smooth,tension=.7] coordinates {(-2.5,0) (-2,1.25) (-0.5,1.75) (0.1,0.25) (0.5,1.5) (1.25,2) (2.15,1.6) (2.25,0.75) (1.8,0.25)};

\draw[fill=white,name intersections={of=a and b}] (intersection-1) circle (1.5pt);
\draw[fill=white,name intersections={of=a and b}] (intersection-2) circle (1.5pt);



\begin{scriptsize}
\node[above] at (-1.75,-0.45) {$a$};
\node[above] at (-2.5,-0.45) {$b$};
\node[above] at (-0.65,-0.55) {$v^{j}_0$};
\node[above] at (0,-0.55) {$v^{j}_2$};
\node[above] at (0.5,-0.55) {$v^{j}_4$};
\node[above] at (1.75,-0.55) {$v^{k}_0$};
\node[above] at (2.1,-0.55) {$v^{k}_2$};
\node[above] at (2.5,-0.55) {$v^{k}_4$};
\end{scriptsize}
\begin{tiny}
\node[above] at (-0.3,0) {$J(j)$};
\node[above] at (1.1,1.3) {$J(k)$};
 
\end{tiny}

\end{tikzpicture}

\caption{A polygon-circle graph which is not an outer-$1$-string graph.}
\label{fig:polygon_circle_not_outer_1_string}
\end{figure}

We will show that $G$ is not an outer-$1$-string graph.
Suppose $G$ is the intersection graph of a simple family of grounded curves $\mathcal{F}$. 
Let $I$ be the independent set in $G$, consisting of the vertices $\{v^i_0,v^i_2,v^i_4: i \in [4]\}$.
Cycle Lemma for Outer-1-string Graphs asserts that the order of the bases of the strings representing 
the vertices of $I$ coincides with its order in the cycle $C$.
It follows that there is $i \in [0,4]$ such that either
\begin{itemize}
\item $b(v^j_0)< b(v^j_2) < b(v^j_4)$ for $j \in [i,i+3]$ and $[b(v^j_0),b(v^j_4)] <  [b(v^{j+1}_0),b(v^{j+1}_4)]$ for $j \in [i,i+2]$, or
\item $b(v^j_4) < b(v^j_2) < b(v^j_0)$ for $j \in [i,i+3]$ and $[b(v^{j+1}_4),b(v^{j+1}_0)] <  [b(v^{j}_4),b(v^{j}_0)]$ for $j \in [i,i+2]$.
\end{itemize}
Again, the superscripts are taken modulo $4$.
Suppose the first case holds.
Clearly, there are two indices $j<k$ in $[i,i+3]$ such that both the bases of $s(a)$ and of $s(b)$ 
are outside of the intervals $[b(v^j_0),b(v^j_4)]$ and $[b(v^k_0),b(v^k_4)]$.
Consider a Jordan curve $J(j)$ that consists of:
\begin{itemize}
\item a substring of $s(v^j_0)$ between $b(v^j_0)$ and the 
intersection point of $s(v^j_0)$ and $s(a)$,
\item a substring of $s(a)$ between the intersection points of $s(a)$ with the strings
$s(v^j_0)$ and $s(v^j_4)$,
\item a substring of $s(v^j_4)$ between $b(v^j_4)$ and the 
intersection point of $s(v^j_4)$ and $s(a)$,
\item the segment of the grounding line between $b(v^j_0)$ and $b(v^j_4)$.
\end{itemize}
Note that the interior of $s(v^j_2)$ is contained in the interior of $J(j)$ -- see Figure~\ref{fig:polygon_circle_not_outer_1_string} for an illustration.
We define a Jordan curve $J(k)$ similarly and we note that the interior of $s(v^k_2)$ is contained in the interior of $J(k)$.
Since the strings $s(v^j_0),s(v^j_4),s(v^k_0),s(v^k_4)$ are pairwise disjoint and $[b(v^j_0), b(v^j_4)] < [b(v^k_0),b(v^k_4)]$, 
the interiors of $J(j)$ and $J(k)$ are also disjoint (despite the fact that $J(j)$ and $J(k)$ may touch each other).
Now, consider the string $s(b)$.
Note that the base of $s(b)$ is contained in the exteriors of $J(i)$ and $J(k)$.
Now, since $s(b)$ intersects $s(v^j_2)$ and $s(v^j_4)$, we deduce that $s(b)$ must enter the interiors of $J(j)$ and of $J(k)$.
Since $s(b)$ is disjoint with $s(v^j_0),s(v^j_4),s(v^k_0),s(v^k_4)$, we deduce that $s(b)$ intersects $s(a)$ at least two times, 
once when entering the interior of $J(j)$ and the second time when entering the interior of $J(k)$.
This is a contradiction with the fact that $\mathcal{F}$ is a simple family of grounded strings. 
\end{proof}

\bibliographystyle{plain}
\bibliography{L-shapes}

\end{document}